\newcommand{\EE}{\mathbb{E}}
\newcommand{\PP}{ \mathbb{P}}
\newcommand{\FF}{ \mathbb{F}}
\newcommand{\M}{{\mathbb M}}
\DeclareMathOperator*{\esssup}{ess\,sup}
\newcommand{\ds}{\displaystyle}
\newcommand{\ito}{\mathcal{I}^{n}}
\newcommand{\control}{(L^{2,2}_{\mathbb{F}})^{m}}
\newenvironment{proof}[1][Proof]{\textbf{#1.} }{\ \rule{0.5em}{0.5em}}
\newtheorem{remark}{\textbf{Remark}}[section]
\newtheorem{lemma}{\textbf{Lemma}}[section]
\newtheorem{theorem}{\textbf{Theorem}}[section]
\newtheorem{corollary}{\textbf{Corollary}}[section]
\newtheorem{proposition}{\textbf{Proposition}}[section]
\newtheorem{definition}{\textbf{Definition}}[section]
\numberwithin{equation}{section}
\title{ Some sensitivity results in stochastic optimal control:  A Lagrange multiplier point of view \author{
J. Backhoff  \thanks{ Berlin Mathematical School and Institut f\"ur Mathematik of Humboldt Universit\"at zu Berlin.} \and     F. J. Silva  \thanks{ Institut de recherche XLIM-DMI, UMR-CNRS 7252 Faculté des sciences et techniques 
Université de Limoges, 87060 Limoges, France (francisco.silva@unilim.fr).} \thanks{ This work was initiated during a one month visit to the Hausdorff Research Institute for Mathematics at the University of Bonn in the framework of the Trimester Program Stochastic Dynamics in Economics and Finance}\thanks{The authors express their gratitude to J. Fontbona for   useful discussions regarding the content of Section 3.}}}
\def\dd{{\rm d}}
\def\weight(#1,#2){c_{#1,#2}}
\def\F{\mathcal{F}}
\def\G{\mathcal{G}}
\def\I{\mathcal{I}}
\def\L{\mathcal{L}}
\def\M{\mathcal{M}}
\def\P{\mathcal{P}}
\def\U{\mathcal{U}}
\def\eps{\varepsilon}
\def\half{\mbox{$\frac{1}{2}$}}
\def\1B{{\bf  1}}
\newcommand{\RR}{\mathbb{R}}
\newcommand\be{\begin{equation}}
\newcommand\ee{\end{equation}}
\newcommand\ba{\begin{array}}
\newcommand\ea{\end{array}}
\newcommand{\bean}{\begin{eqnarray*}}
\newcommand{\eean}{\end{eqnarray*}}
\def\ds{\displaystyle}
\begin{document}
\maketitle
\abstract In this work we provide a first order sensitivity analysis of some parameterized stochastic optimal control problems. The parameters can be given by random processes.
The main tool is the  one-to-one correspondence between the adjoint states appearing in a weak form of the stochastic Pontryagin principle and the Lagrange multipliers associated to the state equation. \medskip\\
{\bf Keywords:} Stochastic Control, Pontryagin principle, Lagrange multipliers, Sensitivity analysis, LQ problems, Mean variance portfolio selection problem.
\normalsize
\begin{description}
\item [{\bf MSC 2000}: 93E20,49Q12,47J30,49N10,91G10] .
\end{description}
\section{Introduction}
One of the most important results in stochastic optimal control theory is Pontryagin Principle, introduced and refined by \cite{Kus65}, \cite{Bismut76}, \cite{haussmann1986stochastic}, \cite{Ben83} and \cite{Peng90} among others (see \cite[Chapter 3, Section 7]{YongZhou} for a historical account).  In its simplest form, it says that almost surely the optimal control minimizes an associated {\it Hamiltonian}. This Hamiltonian depends on the optimal state and an {\it adjoint pair}, which solves an associated Backward Stochastic Differential Equation (BSDE for short).  
 Roughly speaking, the mentioned necessary condition appears as one perturbs the optimal control and analyzes up to first order (or second-order, if the volatility term is controlled and the set of admissible controls is non-convex) the impact of such perturbation on the cost function. A natural question that arises is whether by regarding the stochastic optimal control problem as an infinite dimensional optimization problem in an appropriate functional setting, the usual machinery of   optimization theory yields an interpretation of the aforementioned adjoint states. From this perspective it is conceivable that fundamental tools such as convex-duality, Lagrange multipliers and non-smooth analysis  (to name a few) may shed new lights and provide new interpretations into the field of stochastic optimal control.

The idea of dealing with stochastic optimal control problems from the point of view of optimization theory is not new. In a remarkable article \cite{Bismut73*}, the author extends to the stochastic case the results of \cite{Rockafellar70} obtained in the deterministic framework. For convex problems, he proves essentially that    the solutions of the original optimization problem and its dual, in the sense of convex analysis, must fulfil the conditions appearing in Pontryagin's  Principle. In the non-convex case, a  very  interesting  analysis is performed in \cite{Loewen87} where the author uses non-smooth analysis techniques to tackle the case of a non-linear controlled Stochastic Differential Equation (SDE for short)\footnote{More recently, in e.g. \cite{ChengYan,Kosmol}, the Lagrange multiplier technique has been applied formally in order to derive optimality conditions. However,  no connexions with Pontryagin's principle are analyzed.}.
%


In this article we develop a rigorous   functional framework under which the Lagrangian  approach to stochastic optimal control becomes fruitful. As a matter of fact, we relate the adjoint states appearing in the Pontryagin principle with the Lagrange multipliers of the associate optimization problem,  thus extending the results of \cite{Bismut73*} in the convex case, by using a different method. In several interesting cases, this result allows us to perform a {\it first order  sensitivity analysis of the value function}, under infinite dimensional perturbations of the dynamics.  To the best of our knowledge, this type of sensitivity result had been obtained for finite dimensional perturbations of the initial condition (see the works   \cite{Loewen87,Zhou90,MR1128491}) only.  We restrict ourselves to a finite-horizon, brownian setting, yet consider the case of non-linear controlled SDEs with random coefficients and the control being present both in the drift and diffusion parts, pointwise  convex constraints on the controls, and finite dimensional constraints of expectation-type on the final state. In mathematical language, we deal with problems of the form:
$$\left.\ba{l} \ds \inf_{(x,u)} \EE \left[ \int_{0}^{T}\ell(\omega, t, x(t),u(t)) \dd t + \Phi(\omega,x(T))  \right]\\[12pt]
\mbox{s.t. } \; \; \; \; 	x(t) = x_0+ \int_{0}^{t} f(s,  x(s),u(s)) \dd s+   \int_{0}^{t} \sigma( s, x(s), u(s)) \dd W(s),\;  \forall \; t\in [0,T[,\\[4pt]  
						
\hspace{1.05cm} \EE\left( \Phi_{E}( x(T)) \right)=0, \hspace{0.4cm}  \EE\left( \Phi_{I}( x(T)) \right)\leq 0,  \hspace{0.4cm} u(\omega,t) \in U \; \; \mbox{a.s.}\ea \right\}\eqno(CP)
$$
where $\ell$, $\Phi$, $f$, $\sigma$, $x_0$, $\Phi_{E}$, $\Phi_{I}$ are the data of the problem, which can be random, satisfying some natural assumptions detailed in Section \ref{mwmnnsndnadawwwapapapapas}, and $U\subseteq \RR^{m}$ is a convex set. Under some standard assumptions, we have that for every square integrable and progressively measurable control $u$, there exists a unique solution $x[u]$ of the SDE in $(CP)$. In this sense, problem $(CP)$ can be reformulated in terms of $u$ only and the SDE constraint can be eliminated. However, we have chosen to work with the pair $(x,u)$ and keep the SDE constraint in order to associate to it a Lagrange multiplier, in view of the important consequences of this approach in the sensitivity analysis of the optimal cost of $(CP)$  (see Section \ref{sens}).

By defining a Hilbert space topology on the space of It\^o processes, we naturally deduce that whenever the Lagrange multipliers  associated to the SDE constraint in $(CP)$ exists they must be It\^o processes themselves. With this methodology we can prove a one-to-one simple relationship between the aforementioned Lagrange multipliers and the adjoint states appearing in a weak form of Pontryagin's principle. More concretely, we say that $(p,q)$ is a {\it weak-Pontryagin multiplier}  at a solution $(x,u)$ if the same conditions appearing in the usual Pontryagin principle holds true (see \cite[Theorem 3]{Peng90}), except for the condition of minimization of the Hamiltonian  
which is replaced by the  weaker statement corresponding to its first order optimality condition (see Section \ref{descripcionpmp} for a detailed exposition). Thus, it is easily seen that every adjoint pair appearing in the usual Pontryagin principle is a weak-Pontryagin multiplier. In Theorem \ref{Teoidentificacion} we  prove that given a weak-Pontryagin multiplier $(p,q)$, the process  
\be\label{lamultiplier} \lambda(\cdot):= p(0)+ \int_{0}^{\cdot} p(s) \dd s + \int_{0}^{\cdot} q(s) \dd W(s), \ee
is a Lagrange multiplier associated to the SDE constraint in $(CP)$. Conversely, every Lagrange multiplier $\lambda(\cdot)= \lambda_{0} + \int_{0}^{\cdot} \lambda_{1}(s) \dd s + \int_{0}^{\cdot} \lambda_{2}(s) \dd W(s)$, associated to this constraint, satisfies that $\lambda_{0}= \lambda_{1}(0)$ and $(\lambda_{1}, \lambda_{2})$ is a weak-Pontryagin multiplier. What is more, in the case of {\it convex costs and linear dynamics} we derive in Theorem \ref{sensibilidad1} the existence of Lagrange multipliers and hence the Pontryagin principle,  by solely invoking the theory of Lagrange multipliers in Banach spaces (see e.g.  \cite{bstour,BonSha} for a survey). Even if this type of arguments can be extended to the case of non-convex costs (see Remark \ref{mqmennnnaasasasa}{\rm(iv)}), at the present time we do not know if it is possible by the latter theory to prove Pontryagin's principle in the case of non-linear dynamics.

One advantage of identifying the Lagrange multipliers of an optimization problem is that, under some precise conditions, these multipliers allow to perform a first-order sensitivity analysis of the value function as a function of the problem parameters. In a nutshell, if the optimization problem at hand is convex (this is the case of convex costs and linear equality constraints) or smooth and stable with respect to parameter perturbations (e.g. if the optimizers converge as we vary the parameters, and the functions involved are at least continuously differentiable) then the sensitivity of the value function in terms of the perturbation is  related to the derivative of the Lagrangian with respect to the parameters taken in the perturbation direction (see e.g. \cite[Section 4.3]{BonSha}).

Using the identification of Lagrange and weak-Pontryagin multipliers we establish in Section \ref{sens} our main results:  In Theorem \ref{sensibilidad1} we prove, for example, that for stochastic optimal control problems with convex costs and linear dynamics, an additive (random, time-dependent) perturbation $(\Delta f,\Delta \sigma)$ to the drift and  diffusion parts of the controlled SDE changes the value function (up to first order) by exactly 
$$\EE\left( \int_{0}^{T}p(t)^{\top}  \Delta f(t) \dd t\right) + \EE\left( \int_{0}^{T} \mbox{{\rm tr}}\left[q^{\top}(t)  \Delta \sigma (t) \right] \dd t\right),$$
where $(p,q)$ is (in this case) the unique adjoint state appearing in the Pontryagin's principle. A simple corollary of this is that if one perturbs a deterministic optimal control problem by a small (brownian) noise term, the value function remains unaltered up to first-order, as was observed in \cite{Loewen87} by other methods. 

At the present point we cannot extend the previous sensitivity analysis to non-convex problems. However, we can tackle some cases of non-additive parameter perturbations of convex stochastic optimal control problems. This is an important improvement from what was outlined in the previous paragraph, as in practice parameter error/inaccuracy can propagate in very complicated fashions if for instance this error is amplified by the decision (control) variable. This is the setting we face in two examples we deal with in this article; the stochastic Linear-Quadratic (LQ) control problem and the Mean-Variance portfolio selection problem, which is an LQ problem with a constraint on the expected value of the final state.   In these problems, it is natural to consider  perturbations of the matrices appearing in the dynamics that multiply  either the state or the control.  The main tool here is the stability result in Proposition \ref{qmmqemqmndndndndaaa} regarding a weak continuity property for the solutions of linear SDE and BSDE in terms of the parameters.

As suggested by their name, in a stochastic LQ problem one seeks to minimize a quadratic functional of the state and control variables, which are related through a linear SDE. Such problems are to be found everywhere in engineering and economics  sciences  and we refer the reader to  \cite{Bismut76a, ChenYong01,Tang,YongZhou} and the references therein for an exposition of the theory. Our main results here are a {\it strong stability property} for the solutions of parameterized unconstrained convex LQ problems (see Proposition \ref{convergencia}) and  Theorem \ref{teoremalq}, where we provide a complete sensitivity analysis for the value function in terms of the parameters. More precisely, we prove that the optimal cost depends in a {\it continuously differentiable} manner  on the various parameters and we give explicit expressions for the associated derivatives. From the practical point of view, this result can have interesting applications. As matter of fact, recall that the resolution of deterministic LQ problems can be done through the resolution of an associate deterministic backward Riccati differential equation. The analogous result holds true in the stochastic framework \cite{Tang}, but in that case the Riccati equation is a highly nonlinear BSDE. Therefore, for small random perturbations of the matrices of a deterministic LQ problem, it seems  reasonable to approximate the value function of the perturbed problem as the value of the deterministic one plus a first order term, which can be calculated in terms of the solution of the {\it deterministic} Riccati equation (see Remark \ref{remarkutil}{(i)}).

In the classical Mean-Variance portfolio selection problem, one seeks to find the portfolio rendering the least variance of the terminal wealth with a guaranteed fixed expected return. This is a very central topic in finance and economics, and we refer the reader to \cite{ZhouLi00} (random coefficients), \cite{Oks04} (case with jumps), among others. As for the general LQ case, our  major contributions here are Proposition \ref{lemconv}, dealing with an {\it stability} analysis for the optimal solutions in terms of the perturbation parameters (the initial capital,  deterministic interest/saving rates, the desired return, the drift and the diffusion coefficients) and Theorem \ref{corosens}, where we prove that the optimal cost is {\it $C^{1}$} with respect to those perturbations.

The article is structured as follows. In Section \ref{preliminares} we introduce relevant notation and present the mentioned Hilbert space topology in the space on It\^o processes, along with some needed technical lemmata. Next in Section \ref{BSDE} we identify some operators that will be of importance in the next section and find their adjoints in terms of associated BSDEs. In Section \ref{mwmnnsndnadawwwapapapapas} we define the optimal control problem, we study the differentiability properties of the several functions appearing in the data  and culminate establishing the one-to-one relationship between Lagrange multipliers and weak-Pontryagin multipliers. Then in Section \ref{sens} we take advantage of the Lagrange point of view and analyze the differentiability properties of the value function with respect to its parameters in the case of linear perturbations (Section \ref{sens1}) of convex problems, the case of stochastic Linear-Quadratic problems (Section \ref{sens2}) and Mean-Variance portfolio optimization problem (Section \ref{sens3}).

 \section{Preliminaries and functional framework}\label{preliminares} 
Let  $T>0$ and consider a filtered probability space  $(\Omega, \F, \mathbb{F}, \mathbb{P})$, on which a $d$-dimensional ($d \in \mathbb{N}^{*}$)     Brownian motion $W(\cdot)$  is defined. We suppose that $\mathbb{F}=\left\{\F_{t}\right\}_{0\leq t \leq T}$  is the   natural filtration, augmented by all $\mathbb{P}$-null sets in $\F$, associated to $W(\cdot)$. We recall that $\mathbb{F}$ is right-continuous. Given   $\beta, p\in [1,\infty]$  and $n\in \mathbb{N}$ let us consider the Banach spaces\small
$$\ba{rl}
(L_{\FF}^{\beta,p})^{n}&:= \left\{ v \in L^{\beta}\left(\Omega ; L^{p}\left([0,T]; \RR^{n}\right)\right) ; \ (t,\omega) \to v(t,\omega):=v(\omega)(t) \  \mbox{is } \mathbb{F}\mbox{-progressively measurable}\right\}.
\ea	 
$$\normalsize
We write  $\| \cdot \|_{\beta,p}$ for the natural norms:  
$$
\|v\|_{\beta,p}:=\left[\mathbb{E} \left(  \|v(\omega)\|_{L^{p}\left([0,T]\right)}^{\beta}\right) \right]^{\frac{1}{\beta}} 
\mbox{and } \; \; \; 
\|v\|_{\infty,p}:= \esssup\limits_{\omega\in \Omega}\|v(\omega)\|_{L^{p}\left([0,T]\right)}.
$$
The case $\beta=p=2$ is of particular interest   since $(L_{\FF}^{2,2})^{n}$  is a Hilbert space endowed with the scalar product
$$\langle v_1, v_2\rangle_{L^{2}}:= \EE\left( \int_{0}^{T} v_{1}(t)^{\top} v_{2}(t) \dd t \right).$$
We set $(\M_{c}^{2})^{n}$  for the set consisting of  $\mathbb{F}$-adapted, $\RR^{n}$-valued square integrable martingales $x(\cdot)$ satisfying that $x(0)=0$.  Recall that in the brownian filtration $\mathbb{F}$, every martingale admits a version having $\PP$-almost surely (a.s.)  continuous trajectories (see \cite[Theorem 3.5, Chapter V]{Revuz-Yor}). In particular, the elements in $(\M_{c}^{2})^{n}$ can be identified with  $\FF$-progressively measurable processes. Let us also recall that for every $x\in (\M_{c}^{2})^{n}$, the martingale representation theorem (see e.g. \cite[Chapter 2, Theorem 6.6]{ikedawata}) provides the existence of a unique $x_{2} \in (L^{2,2}_{\FF})^{n\times d}$ such that
\be\label{mamnennennrnnrnrss} x(t)= \int_{0}^{\cdot} x_{2}(s) \dd W(s) \hspace{0.5cm} \forall \; t \in [0,T],\ee
where, denoting $ x_{2}^{ij}:= (x_{2}^{j})^{i}$,
$$\left(\int_{0}^{\cdot} x_{2}(s) \dd W(s)\right)^{i}:= \sum_{j=1}^{d}\int_{0}^{\cdot} x_{2}^{ij}(s) \dd W^{j}(s) \hspace{0.5cm} \mbox{for all $i=1, \hdots, n$.}$$
Note that relation \eqref{mamnennennrnnrnrss}, Doob's inequality and the Itô-isometry for the stochastic integral imply that,   endowed with the scalar product 
$$ \langle x, y \rangle_{\M^{2}_{c}} := \EE\left( x(T)^{\top} y(T) \right), $$
 $(\M_{c}^{2})^{n}$  is a Hilbert space  which is   a   closed subspace of $(L_{\FF}^{2,\infty})^{n}$. We now consider a larger Hilbert space, called Itô space, which is   fundamental in the rest of the article. In order to provide a rigorous definition  
let us consider the application $I: \RR^{n} \times (L^{2,2}_{\FF})^{n} \times (L^{2,2}_{\FF})^{n\times d} \to (L^{2,\infty}_{\FF})^{n}$ defined as 
\be\label{amdmmrnnrnssasa}I(x_0,x_1,x_2)(\cdot):= x_0 + \int_{0}^{\cdot} x_{1}(s) \dd s + \int_{0}^{\cdot} x_{2}(s) \dd W(s).\ee
We have 
\begin{lemma}\label{qnernqnnaaaaAAAa} The application $I$ is well defined, injective and $\exists \; c>0$ such that 
\be\label{amdmmmwmwmw}
\| I(x_0,x_1,x_2)\|_{2,\infty} \leq c\left( |x_0|+ \|x_{1}\|_{2,2} +\sum_{j=1}^{d} \|x_{2}^{j}\|_{2,2}\right).
\ee
\end{lemma}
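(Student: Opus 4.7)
The plan is to establish the three assertions (well-definedness, the norm estimate, and injectivity) essentially simultaneously: the estimate will itself deliver well-definedness, while injectivity follows from the uniqueness of the Doob--Meyer decomposition combined with martingale representation.

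First I would verify that for any triple $(x_0,x_1,x_2)$ in the stated product space, the pathwise Lebesgue integral $t\mapsto \int_0^t x_1(s)\dd s$ and the It\^o integral $t\mapsto \int_0^t x_2(s)\dd W(s)$ are both well defined, $\FF$-adapted, and (taking continuous versions) $\FF$-progressively measurable. Thus $I(x_0,x_1,x_2)$ is at least a progressively measurable process with continuous trajectories.

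Next I would derive the norm estimate \eqref{amdmmmwmwmw} term by term. The $|x_0|$ contribution is obvious. For the drift part, Cauchy--Schwarz pathwise gives
$$\sup_{t\in [0,T]}\Bigl|\int_{0}^{t} x_{1}(s)\dd s\Bigr|^{2}\le T\int_{0}^{T}|x_{1}(s)|^{2}\dd s,$$
so taking expectation yields a bound by $T\,\|x_{1}\|_{2,2}^{2}$. For the martingale part, each component $\int_0^\cdot x_2^{ij}(s)\dd W^j(s)$ is a square integrable continuous martingale, so Doob's $L^{2}$ inequality combined with the It\^o isometry gives
$$\EE\Bigl[\sup_{t\in[0,T]}\Bigl|\int_{0}^{t} x_{2}^{j}(s)\dd W^{j}(s)\Bigr|^{2}\Bigr]\le 4\,\EE\Bigl[\int_{0}^{T}|x_{2}^{j}(s)|^{2}\dd s\Bigr]=4\,\|x_{2}^{j}\|_{2,2}^{2}.$$
Summing over $j$ and using the elementary inequality $(a+b+c)^{2}\le 3(a^{2}+b^{2}+c^{2})$ (or simply the triangle inequality in $L^{2}(\Omega;L^{\infty}[0,T])$) yields \eqref{amdmmmwmwmw} with an explicit constant, which in particular proves that $I(x_0,x_1,x_2)\in (L^{2,\infty}_{\FF})^{n}$, confirming that $I$ is well defined.

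Finally, for injectivity, suppose $I(x_0,x_1,x_2)=I(x_0',x_1',x_2')$ in $(L^{2,\infty}_{\FF})^{n}$. Evaluating at $t=0$ (using the continuous version) forces $x_0=x_0'$ a.s., hence $x_0=x_0'$ as elements of $\RR^{n}$. The remaining identity
$$\int_{0}^{\cdot}\bigl(x_{1}(s)-x_{1}'(s)\bigr)\dd s = -\int_{0}^{\cdot}\bigl(x_{2}(s)-x_{2}'(s)\bigr)\dd W(s)\qquad \PP\text{-a.s.}$$
expresses a continuous process of finite variation as a continuous square integrable martingale starting at $0$; such a process must be identically zero. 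Consequently the stochastic integral on the right vanishes, so by the uniqueness clause of the martingale representation theorem (already invoked above for \eqref{mamnennennrnnrnrss}) we get $x_2=x_2'$ in $(L^{2,2}_{\FF})^{n\times d}$, and the Lebesgue integral on the left vanishes as well, which by Lebesgue differentiation a.s.\ forces $x_1=x_1'$ in $(L^{2,2}_{\FF})^{n}$. The only point that demands care is the appeal to ``continuous martingale of finite variation starting at $0$ is zero''; this is a classical fact (see e.g. Revuz--Yor, Chapter IV), and in our square-integrable setting it can alternatively be established directly by computing the bracket of the difference process, which equals $\int_{0}^{\cdot}|x_{2}(s)-x_{2}'(s)|^{2}\dd s$ and must coincide with that of a finite variation process, namely $0$. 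This is the only delicate step; once it is accepted, injectivity is immediate.
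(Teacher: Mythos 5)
Your proof is correct and follows essentially the same route as the paper: the estimate via Cauchy--Schwarz/Jensen on the drift term plus Doob's $L^{2}$ inequality and the It\^o isometry on the martingale term, and injectivity from the fact that a continuous martingale of finite variation (started at $0$) is identically zero. Your write-up merely adds explicit detail (evaluation at $t=0$, uniqueness in the martingale representation, Lebesgue differentiation) to the same underlying argument.
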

\begin{proof} There exists a constant $c>0$ such that  for all $t \in [0,T]$,\small
$$ |x(t)|^{2}= \left|x_0 + \int_{0}^{t} x_{1}(s) \dd s + \int_{0}^{t} x_{2}(s) \dd W(s)\right|^{2}\leq c\left( |x_0|^2+ \left| \int_{0}^{t} x_{1}(s) \dd s \right|^2 +\left| \int_{0}^{t} x_{2}(s) \dd W(s) \right|^2\right). $$\normalsize
By  Jensen inequality applied to the first integral we get the existence of $c'>0$ such that 
$$ \sup_{t\in [0,T]} |x(t)|^{2}\leq  c'\left( |x_0|^2+ \int_{0}^{T} \left| x_{1}(s) \right|^2 \dd s +\sup_{t \in [0,T]} \left| \int_{0}^{t} x_{2}(s) \dd W(s) \right|^2\right).$$
Taking the expected value, Doob's inequality and the Itô-isometry property for the stochastic integrals yields to \eqref{amdmmmwmwmw}. Finally, since the only continuous martingales with  finite-variation   are the constants (see e.g.   in \cite[Proposition 1.2]{Revuz-Yor}), we see that $I$ is injective.
\end{proof}\smallskip

We consider the space $\I^{n}$ of  $\RR^{n}$-valued Itô processes  defined by
$$ \I^{n}:= I\left( \RR^{n} \times (L^{2,2}_{\FF})^{n} \times (L^{2,2}_{\FF})^{n\times d}\right).$$
By Lemma \ref{qnernqnnaaaaAAAa}   have that $\I^{n}$ is a linear space which can be identified with $\RR^{n} \times (L^{2,2}_{\FF})^{n} \times (L^{2,2}_{\FF})^{n\times d}$. For $x \in \I^{n}$ we set $(x_0,x_1,x_{2})= I^{-1}(x)$ and we define the scalar product
\be\ba{ll}
\langle x,y \rangle_{\I}&:= x_{0}^{\top}y_{0} + \langle  x_{1}, y_{1} \rangle_{L^{2}} + \sum_{j=1}^{d}\langle  x_{2}^{j}, y_{2}^{j} \rangle_{L^{2}} \hspace{0.5cm} \forall \; x, y \in \ito, \\[4pt]
\; & =   x_0^{\top} y_0 + \langle  x_{1}, y_{1} \rangle_{L^{2}} + \sum_{j=1}^{d}\left\langle \int_{0}^{\cdot}x_{2}^{j}(s)\dd W^{j}(s),\int_{0}^{\cdot}y_{2}^{j}(s)\dd W^{j}(s) \right\rangle_{\M_{c}^2}\\[4pt]
\; & =  x_{0}^{\top}y_{0} + \EE\left(\int_{0}^{T}  x_{1}(t)^{\top} y_{1}(t) \dd t \right) +  \EE\left(\int_{0}^{T}  \mbox{{\rm tr}}\left[ x_{2}(t)^{\top} y_{2}(t)\right] \dd t \right). \ea
\ee
and we define the norm $\|x\|_{\I}:=\sqrt{\langle x, x \rangle_{\I}}$.
\begin{lemma}\label{meqwnenenqneqbebqbs} The space $(\I^{n}, \| \cdot \|_{\I})$ is a Hilbert space which is continuously embedded in $(L^{2,\infty}_{\FF})^{n}$.  
\end{lemma}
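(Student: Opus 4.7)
The plan is to transport the Hilbert space structure of the product space $E:=\RR^{n} \times (L^{2,2}_{\FF})^{n} \times (L^{2,2}_{\FF})^{n\times d}$ through the bijection onto its image established in Lemma \ref{qnernqnnaaaaAAAa}. Endowed with the natural inner product
\[
\langle (x_0,x_1,x_2),(y_0,y_1,y_2)\rangle_{E}:= x_0^{\top}y_0 + \langle x_1,y_1\rangle_{L^{2}} + \sum_{j=1}^{d}\langle x_2^{j},y_2^{j}\rangle_{L^{2}},
\]
the space $E$ is a Hilbert space as a finite product of Hilbert spaces. By the definition of $\I^{n}$ and the injectivity of $I$ (Lemma \ref{qnernqnnaaaaAAAa}), the map $I:E\to \I^{n}$ is a linear bijection, and the scalar product $\langle\cdot,\cdot\rangle_{\I}$ introduced in the excerpt is precisely the pullback of $\langle\cdot,\cdot\rangle_{E}$ under $I^{-1}$. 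Hence $I$ is tautologically an isometric isomorphism from $E$ onto $\I^{n}$.

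First I would check that $\langle\cdot,\cdot\rangle_{\I}$ really is an inner product: bilinearity and symmetry follow from those of $\langle\cdot,\cdot\rangle_{E}$, while positive definiteness uses the injectivity of $I$ (if $\|x\|_{\I}=0$ then $I^{-1}(x)=0$, so $x=0$). Next, to show completeness, I would take a Cauchy sequence $(x^{(k)})_{k}\subset \I^{n}$, form the triples $(x_0^{(k)},x_1^{(k)},x_2^{(k)}):=I^{-1}(x^{(k)})$, note they are Cauchy in $E$ by construction, and invoke completeness of $E$ to obtain a limit $(x_0,x_1,x_2)\in E$. Setting $x:=I(x_0,x_1,x_2)\in \I^{n}$ yields $\|x^{(k)}-x\|_{\I}\to 0$ directly from the isometry, proving completeness. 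Thus $(\I^{n},\|\cdot\|_{\I})$ is a Hilbert space.

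For the continuous embedding into $(L^{2,\infty}_{\FF})^{n}$, I would just invoke inequality \eqref{amdmmmwmwmw} from Lemma \ref{qnernqnnaaaaAAAa} together with the elementary bound
\[
|x_0| + \|x_1\|_{2,2} + \sum_{j=1}^{d}\|x_2^{j}\|_{2,2} \leq \sqrt{d+2}\,\left(|x_0|^{2}+\|x_1\|_{2,2}^{2}+\sum_{j=1}^{d}\|x_2^{j}\|_{2,2}^{2}\right)^{1/2} = \sqrt{d+2}\,\|x\|_{\I},
\]
which follows from Cauchy--Schwarz applied to the sum of $d+2$ nonnegative terms. This gives $\|x\|_{2,\infty}\leq c\sqrt{d+2}\,\|x\|_{\I}$, establishing continuity of the embedding $\I^{n}\hookrightarrow (L^{2,\infty}_{\FF})^{n}$.

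I do not expect any genuine obstacle: once one realizes that $\I^{n}$ is defined as the image of a Hilbert space under an injective linear map and that the inner product is the pullback, every claim reduces either to a transfer of structure or to a direct application of the already-proved estimate \eqref{amdmmmwmwmw}. The only point worth writing carefully is the identification of $\langle\cdot,\cdot\rangle_{\I}$ with $\langle I^{-1}(\cdot),I^{-1}(\cdot)\rangle_{E}$, which is immediate from the three equivalent expressions given in the definition (using the It\^o isometry for the third component).
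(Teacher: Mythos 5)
Your proposal is correct and follows exactly the paper's route: the paper's own (one-line) proof also transfers the Hilbert structure of $\RR^{n}\times (L^{2,2}_{\FF})^{n}\times (L^{2,2}_{\FF})^{n\times d}$ through the injective map $I$ of Lemma \ref{qnernqnnaaaaAAAa} and obtains the continuous embedding from the estimate \eqref{amdmmmwmwmw}. You simply spell out the details (pullback inner product, completeness via Cauchy sequences in the product space) that the paper leaves implicit.
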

\begin{proof} The result is a direct consequence of the fact that $\RR^{n}\times (L^{2,2}_{\FF})^{n} \times (L^{2,2}_{\FF})^{n\times d}$ is a Hilbert space and 
Lemma \ref{qnernqnnaaaaAAAa}.
\end{proof}
\begin{remark} {\rm (i) } We can  thus identify $\I^{n}$ with  $\RR^{n} \times (L^{2,2}_{\FF})^{n} \times (L^{2,2}_{\FF})^{n\times d}$ and by  \eqref{mamnennennrnnrnrss} with $\RR^{n}\times (L^{2,2}_{\FF})^{n} \times (\M_{c}^{2})^{n}$.\smallskip\\
{\rm(ii)} We will identify the topological dual $\I^{*}$ with the space $\I$ itself. 
\end{remark}
 \section{Adjoint operators and Backward Stochastic Differential Equations (BSDEs)}\label{BSDE}
 We start with two basic well-known results. However, since the proofs are short, we provide the details for the reader's convenience.  
 \begin{lemma}\label{memmnnfbbbbbrbrwww}
 Let $x\in (L^{2,\infty}_{\FF})^{n}$ and $r\in (L^{2,2}_{\FF})^{n}$. Then, for every $j=1, \hdots, d$, 
 $$M^{j}(\cdot):= \int_{0}^{\cdot} x(s)^{\top} r(s) \dd W^{j}(s) \hspace{0.3cm} \mbox{ is a martingale.}$$
 \end{lemma}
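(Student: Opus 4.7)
The plan is to note first that $M^{j}$ is, by construction, a continuous local martingale: indeed, the integrand $x^{\top}r$ is $\mathbb{F}$-progressively measurable as the product of two such processes, and its pathwise square-integrability
$$\int_{0}^{T}|x(s)^{\top}r(s)|^{2}\dd s\;\leq\;\esssup_{t\in[0,T]}|x(t)|^{2}\cdot\int_{0}^{T}|r(s)|^{2}\dd s\;<\;\infty\quad\text{a.s.}$$
follows from Cauchy-Schwarz applied pointwise and the fact that $x\in (L^{2,\infty}_{\FF})^{n}$, $r\in (L^{2,2}_{\FF})^{n}$. This guarantees that the It\^o integral is a well-defined continuous local martingale.

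The point that needs a short argument is promoting ``local martingale'' to ``true martingale.'' I would not try to check the classical sufficient condition $\EE\int_{0}^{T}|x^{\top}r|^{2}\dd s<\infty$, since that would require an $L^{4,\infty}_{\FF}$ bound on $x$ which we do not have. Instead I would invoke the Burkholder-Davis-Gundy inequality with exponent $1$ to obtain a constant $C>0$ with
$$\EE\Bigl[\sup_{t\in[0,T]}|M^{j}(t)|\Bigr]\;\leq\;C\,\EE\Bigl[\Bigl(\int_{0}^{T}|x(s)^{\top}r(s)|^{2}\dd s\Bigr)^{1/2}\Bigr].$$
Then, using $|x^{\top}r|\leq|x||r|$ and pulling $\esssup_{t}|x(t)|$ out of the time integral,
$$\EE\Bigl[\Bigl(\int_{0}^{T}|x(s)^{\top}r(s)|^{2}\dd s\Bigr)^{1/2}\Bigr]\;\leq\;\EE\Bigl[\esssup_{t\in[0,T]}|x(t)|\,\Bigl(\int_{0}^{T}|r(s)|^{2}\dd s\Bigr)^{1/2}\Bigr],$$
and one final Cauchy-Schwarz on $(\Omega,\PP)$ bounds the right-hand side by $\|x\|_{2,\infty}\,\|r\|_{2,2}<\infty$.

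Consequently $\EE[\sup_{t\in[0,T]}|M^{j}(t)|]<\infty$, so the local martingale $M^{j}$ is of class (DL), which is the standard sufficient condition ensuring that a continuous local martingale is in fact a true martingale; this would conclude the proof. The only mildly subtle point is the choice to use BDG rather than the $L^{2}$-isometry criterion, since the hypotheses on $x$ are precisely at the borderline where the integrand $x^{\top}r$ need not belong to $L^{2}(\Omega\times[0,T])$, yet BDG with exponent one combined with the $L^{2,\infty}$-$L^{2,2}$ Cauchy-Schwarz split absorbs the $\esssup$ gracefully.
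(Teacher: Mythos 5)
Your proposal is correct and follows essentially the same route as the paper: after noting $M^{j}$ is a continuous local martingale, both arguments apply the Burkholder-Davis-Gundy inequality with exponent one and then Cauchy-Schwarz on $\Omega$ to get $\EE[\sup_{t\in[0,T]}|M^{j}(t)|]\leq C\,\|x\|_{2,\infty}\|r\|_{2,2}<\infty$, and conclude via the standard domination criterion for local martingales (the paper cites this as Theorem 51 of Protter, which is exactly your class (DL)/dominated local martingale argument). Your remark about why the $L^{2}$-isometry criterion is unavailable here is accurate and matches the reason the paper resorts to BDG.
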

 \begin{proof}
 Since  $x\in (L^{2,\infty}_{\FF})^{n}$ and $r\in (L^{2,2}_{\FF})^{n}$ we have that the stochastic integral $M^{j}$ is well-defined and is a local-martingale.  By the  Burkholder-Davis-Gundy inequality  (see e.g \cite{KaraShreve91}) we have the existence of a constant $K>0$ such that 
 $$ \EE\left(\sup_{t\in [0,T]} |M^{j}(t)| \right)\leq K \EE\left[ \left(\int_{0}^{T}  |x(s)^{\top} r(s)|^{2} \dd t\right)^{\half}\right] \leq K \| x\|_{2,\infty} \| r\|_{2,2},$$
 where the last inequality follows from the Cauchy-Schwarz inequality. Therefore, by \cite[Theorem 51]{Protter-book}, we have that $M^{j}(\cdot)$ is a martingale with null expectation. \end{proof}\smallskip
 
 Using the above result, the following one is a straightforward consequence of  Itô's Lemma and Lemma \ref{meqwnenenqneqbebqbs}. 
 \begin{lemma}\label{usoito} Let $x$, $y \in \I^{n}$.  Then
$$ \mathbb{E}\left(  x(T)^{\top}  y(T)  \right)=x_{0}^{\top}y_{0}+ \mathbb{E}\left(\int_{0}^{T}\left[ x(t)^{\top}  y_{1}(t) +  y(t)^{\top} x_{1}(t)   + \sum_{j=1}^{d}    (x_{2}^{j}(t))^{\top} y_{2}^{j}(t)  \right] \dd t \right).$$ 

\end{lemma}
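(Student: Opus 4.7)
The plan is to apply It\^o's product formula componentwise to $x(t)^\top y(t)=\sum_{i=1}^n x^i(t)y^i(t)$, take expectations, and use Lemma \ref{memmnnfbbbbbrbrwww} to discard the stochastic integral terms. Concretely, from the definition of $\I^n$, for each coordinate $i$ we have
\[
x^i(t)=x_0^i+\int_0^t x_1^i(s)\dd s+\sum_{j=1}^d\int_0^t x_2^{ij}(s)\dd W^j(s),
\]
and similarly for $y^i$. Applying It\^o's formula to the product $x^i(t)y^i(t)$ and summing over $i=1,\dots,n$, the bounded-variation part contributes $\int_0^t[x(s)^\top y_1(s)+y(s)^\top x_1(s)]\dd s$, the quadratic covariation contributes $\int_0^t\sum_{j=1}^d(x_2^j(s))^\top y_2^j(s)\dd s=\int_0^t\tr[x_2(s)^\top y_2(s)]\dd s$, and the martingale part is a sum of stochastic integrals of the form $\int_0^t(x(s)^\top y_2^j(s)+y(s)^\top x_2^j(s))\dd W^j(s)$.

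Next I would evaluate at $t=T$ and take expectations. The only non-trivial point is the vanishing of the expected stochastic integrals. Here Lemma \ref{meqwnenenqneqbebqbs} provides the continuous embedding $\I^n\hookrightarrow (L^{2,\infty}_\FF)^n$, so that $x,y\in(L^{2,\infty}_\FF)^n$, while by definition $x_2^j,y_2^j\in(L^{2,2}_\FF)^n$. Lemma \ref{memmnnfbbbbbrbrwww} then applies to each of the integrands $x(\cdot)^\top y_2^j(\cdot)$ and $y(\cdot)^\top x_2^j(\cdot)$, showing that the corresponding stochastic integrals are genuine martingales starting from zero, hence of zero expectation.

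Combining these facts yields exactly the claimed identity
\[
\EE(x(T)^\top y(T))=x_0^\top y_0+\EE\!\left(\int_0^T\!\left[x(t)^\top y_1(t)+y(t)^\top x_1(t)+\sum_{j=1}^d(x_2^j(t))^\top y_2^j(t)\right]\!\dd t\right).
\]
I do not anticipate any real obstacle: the only subtle step is checking that the martingale part indeed has zero expectation (i.e., that it is a true martingale rather than only a local one), but this is precisely the content of the previous lemma, whose hypotheses are met thanks to the embedding $\I^n\hookrightarrow(L^{2,\infty}_\FF)^n$. Everything else is bookkeeping in the componentwise application of It\^o's formula.
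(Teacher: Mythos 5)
Your proof is correct and follows exactly the route the paper intends: Itô's product formula componentwise, with the stochastic-integral terms shown to be true martingales of zero expectation via Lemma \ref{memmnnfbbbbbrbrwww}, whose hypotheses hold because Lemma \ref{meqwnenenqneqbebqbs} embeds $\I^{n}$ into $(L^{2,\infty}_{\FF})^{n}$. The paper states the lemma as a straightforward consequence of these two facts, and your write-up supplies precisely the bookkeeping it leaves implicit.
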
  
Given a sigma-algebra $\G \subseteq \F$ we write $L^{p}_{\G}:= L^{p}(\Omega, \G, \PP)$. The following Proposition will be useful.
\begin{proposition}\label{msmamsmasaaaaasasasasa} Let $g \in (L^{2}_{\F_{T}})^{n}$    and $a \in (L^{2,2}_{\FF})^{n}$.
Then, for every $z\in \I^{n}$ we have that 
\begin{eqnarray}\label{admamamsma}\ba{rcl}  \EE\left(  g^{\top} z(T) \right) &=&    \left\langle \EE\left(g| \F_{(\cdot)}\right)+ \int_{0}^{\cdot}  \EE(g| \F_{t})  \dd t, z \right\rangle_{\I} , \\[4pt]
		   \EE\left( \int_{0}^{T}  a(t)^{\top} z(t)  \dd t\right) &=& \left\langle \EE(\int_{0}^{T} a(t) \dd t| \F_{(\cdot)})+  \int_{0}^{\cdot}  \EE(\int_{t}^{T} a(s) \dd s| \F_{t})  \dd t, z \right\rangle_{\I}.\ea
\end{eqnarray}
In particular, 
\be\label{mmwnrnwrnnrnrrr}\EE\left(    g^{\top} z(T) +  \int_{0}^{T}    a(t)^{\top} z(t)  \dd t\right) = \left\langle p(0)+ \int_{0}^{\cdot} p(t) \dd t + \int_{0}^{\cdot} q(t) \dd W(t), z\right\rangle_{\I},\ee
where $(p,q) \in \I^{n} \times  (L^{2,2}_{\FF})^{n\times d}$ is the unique solution of the BSDE
$$ \ba{rcl} \dd p&=& -a(t) \dd t + q(t) \dd W(t), \\[2pt]
			p(T)&=& g.\ea$$
\end{proposition}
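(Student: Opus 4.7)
The strategy is to apply Lemma~\ref{usoito}, the It\^o product formula on $\I^n$, to carefully chosen It\^o processes built out of $g$ and $a$ via the martingale representation theorem, so that the left-hand sides of \eqref{admamamsma} appear in the form $\EE(M(T)^\top z(T))$ for an appropriate $M$.

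For the first identity, I would set $M(t):=\EE(g\mid \F_t)$. Since $g\in L^2_{\F_T}$, $M$ is a square-integrable martingale, and the martingale representation theorem provides a unique $\phi\in(L^{2,2}_\FF)^{n\times d}$ with $M(t)=\EE(g)+\int_0^t \phi(s)\,\dd W(s)$. Hence $M\in\I^n$ with components $(\EE(g),0,\phi)$. Because $z(T)$ is $\F_T$-measurable and $M(T)=g$, one has $\EE(g^\top z(T))=\EE(M(T)^\top z(T))$; applying Lemma~\ref{usoito} to $M$ and $z$, and noting that the $M_1$ term vanishes, produces exactly $\langle M(\cdot)+\int_0^\cdot M(t)\,\dd t,\,z\rangle_\I$, since the latter process has It\^o decomposition $(\EE(g),\, M(t),\, \phi(t))$.

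For the second identity the natural candidate $Y(t):=\EE(\int_t^T a(s)\,\dd s\mid\F_t)$ is not a martingale, so I would pass through the square-integrable martingale $N(t):=\EE(\int_0^T a(s)\,\dd s\mid\F_t)$ (finite since $\int_0^T a\in L^2$ by Cauchy-Schwarz). Because $\int_0^t a(s)\,\dd s$ is $\F_t$-measurable, $Y(t)=N(t)-\int_0^t a(s)\,\dd s$, and martingale representation applied to $N$ yields $\psi\in(L^{2,2}_\FF)^{n\times d}$ with $N(t)=N(0)+\int_0^t\psi(s)\,\dd W(s)$. Consequently $Y\in\I^n$ with components $(N(0),-a,\psi)$. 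Using $Y(T)=0$, Lemma~\ref{usoito} applied to $Y$ and $z$ reads
$$0=Y_0^\top z_0+\EE\!\left(\int_0^T\bigl[Y(t)^\top z_1(t)-a(t)^\top z(t)+\tr(\psi^\top z_2)\bigr]\dd t\right),$$
which upon rearranging is exactly the claimed formula, since $N(\cdot)+\int_0^\cdot Y(t)\,\dd t$ has It\^o components $(N(0),Y(t),\psi(t))$.

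Finally, for \eqref{mmwnrnwrnnrnrrr} I would argue by linearity. Taking the conditional expectation in the BSDE yields $p(t)=\EE(g+\int_t^T a(s)\,\dd s\mid\F_t)=M(t)+Y(t)$, and comparing martingale representations of $R(t):=\EE(g+\int_0^T a(s)\,\dd s\mid\F_t)=M(t)+N(t)$ gives $q=\phi+\psi$; in particular $p(0)=\EE(g)+N(0)$. Summing the two identities of \eqref{admamamsma} therefore produces $\langle\, p(0)+\int_0^\cdot p(t)\,\dd t+\int_0^\cdot q(t)\,\dd W(t),\,z\,\rangle_\I$, which is \eqref{mmwnrnwrnnrnrrr}. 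The only delicate step in the whole argument is reading off the drift of $Y$ as $-a$, which is precisely what the detour through the genuine martingale $N$ accomplishes.
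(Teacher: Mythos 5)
Your proof is correct and follows essentially the same route as the paper: both arguments rest on the martingale representation theorem combined with the product formula of Lemma \ref{usoito}. The only cosmetic difference is in the second identity, where you apply Lemma \ref{usoito} directly to the It\^o process $Y(t)=\EE\left(\int_t^T a(s)\,\dd s\mid\F_t\right)$ with drift $-a$, whereas the paper first integrates by parts against $y(\cdot)=\int_0^\cdot a(t)\,\dd t$ and then invokes the first identity with $g=\int_0^T a(t)\,\dd t$; the two computations are equivalent.
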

\begin{proof} Let us first prove  \eqref{admamamsma}.  Let us denote by $r_g$ for  the unique element in $ (L^{2,2}_{\FF})^{n\times d}$ (see \eqref{mamnennennrnnrnrss}) such that 
\be\label{qemqnenqennnenwqqqqq}   \EE\left(g| \F_{(\cdot)}\right)= \EE(g) + \int_{0}^{\cdot} r_{g}(t) \dd W(t).\ee
Lemma \ref{usoito} implies that
$$\ba{rcl} \EE\left( g^{\top} z(T)  \right) &=& \EE\left(  \EE(g| \F_{T})^{\top} z(T)  \right),\\[4pt]
								\; &=&     \EE\left(   \EE(g| \F_{0})^{\top} z_{0}  + \int_{0}^{T} \left[  \EE(g| \F_{t})^{\top} z_1 + \sum_{j=1}^{d}(r_{g}^{j})^{\top}   z_{2}^{j} \right]  \dd t \right),\\[4pt]
								\; &=&    \left\langle \EE(g)+ \int_{0}^{\cdot}  \EE(g| \F_{t})  \dd t+  \int_{0}^{\cdot}  r_g(t) \dd W(t), z \right \rangle_{\I} , \\[4pt]
 \ea$$
which, together with \eqref{qemqnenqennnenwqqqqq}, yields to the first identity in \eqref{admamamsma}. On the other hand,   setting $y(\cdot):= \int_{0}^{\cdot} a(t) \dd t$,
$$ \EE\left( \int_{0}^{T} a(t)^{\top}  z(t)   \dd t\right)  =  \EE\left( \int_{0}^{T}  z(t)^{\top} \dd y(t)   \right) \; \;=    \EE\left(   y(T)^{\top} z(T)  - \int_{0}^{T}   y(t)^{\top} z_{1}(t)  \dd t  \right),$$
 and the second identity in   \eqref{admamamsma} follows from the first one. To establish \eqref{mmwnrnwrnnrnrrr}, let $q\in (L^{2,2}_{\FF})^{n\times d}$ be such that 
 $$ \EE\left( g + \int_{0}^{T} a(t) \dd t \big| \F_{(\cdot)}\right)= \EE\left(  g + \int_{0}^{T} a(t) \dd t \right)+ \int_{0}^{\cdot} q(t) \dd W(t), $$
and define 
$$ p(t):= \EE\left( g + \int_{t}^{T}a(s) \dd s \big| \F_{t} \right).$$
Then
$$p(t)= \EE\left( g + \int_{0}^{T}a(s) \dd t \big| \F_{t} \right)- \int_{0}^{t}a(s) \dd s=p(0)- \int_{0}^{t}a(s) \dd s+  \int_{0}^{t} q(s) \dd W(s),$$
from which the result follows.
\end{proof} \smallskip

 For $g \in (L_{\FF}^{\infty,\infty})^{n\times n}$ and $h=(h^{j})_{j=1}^{d}$ with $h^{j}\in  (L_{\FF}^{\infty,\infty})^{n\times n}$, let us define the operators $A_{g}$, $B_{h}: \I^{n} \to \I^{n}$ as
\be\label{damdamdmaaaa} A_{g} z  :=  \int_{0}^{\cdot} g(s) z(s) \dd s, \hspace{0.8cm}
 B_{h} z  := \sum_{j=1}^{d} \int_{0}^{\cdot} h^{j}(s)  z(s) \dd W^{j}(s). \ee
  Proposition \ref{msmamsmasaaaaasasasasa} has the following   consequence:
\begin{corollary}\label{mqqmqqsaospapsa} The following assertions hold: \smallskip\\
{\rm(i)} The operator $A_{g}$  is continuous and its adjoint $A_{g}^{*}: \I^{n} \to \I^{n}$ is given by
\be\label{mmmmssssaasasasaasasassa} A_{g}^{*} r(\cdot) =  \EE\left(\int_{0}^{T}  g(t)^{\top} r_{1}(t) \dd t \big | \F_{(\cdot)} \right) +\int_{0}^{\cdot}  \EE\left(\int_{t}^{T}  g(s)^{\top} r_{1}(s) \dd s \big | \F_{t} \right)\dd t, \hspace{0.5cm} \forall \; r\in \I^{n}.\ee
Moreover,  $$ A_{g}^{*} r(\cdot)= p_{g,r}(0)+\int_{0}^{\cdot} p_{g,r}(t) \dd t +  \int_{0}^{\cdot}q_{g,r}(t) \dd W(t) \hspace{0.5cm} \forall \; r \in \I^{n},$$
where $(p_{g,r}, q_{g,r})\in \I^{n}\times (L^{2,2}_{\FF})^{n\times d}$ is the unique solution of the following BSDE
$$\ba{rcl} \dd p(t)&=&  -g(t)^{\top} r_1(t) \dd t + q(t) \dd W(t),   \\[4pt]
		       p(T) &=& 0.
\ea$$
{\rm(ii)} The operator $B_{h}$ is continuous and its adjoint $B_{h}^{*}: \I^{n} \to \I^{n}$ is given by \small
\be\label{mmmmssssaasasasasa} B_{h}^{*} r(\cdot) = \sum_{j=1}^{d}\EE\left(\int_{0}^{T}  h^{j}(t)^{\top} r_{2}^{j}(t) \dd t \big | \F_{(\cdot)} \right) + \sum_{j=1}^{d} \int_{0}^{\cdot}  \EE\left(\int_{t}^{T}  h^{j}(s)^{\top} r_{2}^{j}(s) \dd s \big | \F_{t} \right)\dd t, \hspace{0.5cm} \forall \; r\in \I^{n}.  \ee \normalsize
Moreover, 
$$ B_{h}^{*} r(\cdot)= p_{h,r}(0)+\int_{0}^{\cdot} p_{h,r}(t) \dd t +  \int_{0}^{\cdot}q_{h,r}(t) \dd W(t)  \hspace{0.5cm} \forall \; r \in \I^{n},$$
where $(p_{h,r}, q_{h.r})\in  \I^{n} \times (L^{2,2}_{\FF})^{n\times d}$ is the unique solution of the following BSDE
$$\ba{rcl} \dd p(t)&=&  -\sum_{j=1}^{d}h^{j}(t)^{\top} r_2^{j}(t) \dd t + q(t) \dd W(t),   \\[4pt]
		       p(T) &=& 0.
\ea$$
Consequently, the adjoint of  $A_{g} + B_{h}$ is given by
$$(A_{g} + B_{h})^{*}r(\cdot) = p_{r}(0)+\int_{0}^{\cdot} p_{r}(t) \dd t +  \int_{0}^{\cdot}q_{r}(t) \dd W(t)  \hspace{0.5cm} \forall \; r \in \I^{n},$$
where $(p_{r}, q_{r})\in \I^{n} \times (L^{2,2}_{\FF})^{n\times d}$ is the unique solution of the following BSDE
$$\ba{rcl} \dd p(t)&=&  - \left[g(t)^{\top} r_1(t)+\sum_{j=1}^{d}h^{j}(t)^{\top} r_2^{j}(t)\right] \dd t + q(t) \dd W(t),   \\[4pt]
		       p(T) &=& 0.
\ea$$
\end{corollary}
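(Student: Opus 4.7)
The plan is to reduce everything to Proposition \ref{msmamsmasaaaaasasasasa} by exploiting the identification of $\mathcal{I}^{n}$ with $\mathbb{R}^{n}\times (L^{2,2}_{\mathbb{F}})^{n}\times (L^{2,2}_{\mathbb{F}})^{n\times d}$. First I would check that $A_{g}$ and $B_{h}$ actually map $\mathcal{I}^{n}$ into itself. Under the identification one clearly has $I^{-1}(A_{g}z)=(0,gz,0)$ and $I^{-1}(B_{h}z)=(0,0,(h^{1}z,\dots,h^{d}z))$, so the question reduces to estimating $\|gz\|_{2,2}$ and $\|h^{j}z\|_{2,2}$ in terms of $\|z\|_{\mathcal{I}}$. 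Since $g,h^{j}\in (L^{\infty,\infty}_{\mathbb{F}})^{n\times n}$, we can pull their essential sups out, and then use $\|z\|_{2,2}\leq T^{1/2}\|z\|_{2,\infty}$ together with the continuous embedding $\mathcal{I}^{n}\hookrightarrow (L^{2,\infty}_{\mathbb{F}})^{n}$ from Lemma \ref{meqwnenenqneqbebqbs}. This gives continuity of both $A_{g}$ and $B_{h}$.

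For part (i), I would compute the pairing directly: given $r\in\mathcal{I}^{n}$, the identification yields
$$\langle A_{g}z,r\rangle_{\mathcal{I}}= \mathbb{E}\!\left(\int_{0}^{T}(g(t)z(t))^{\top}r_{1}(t)\,dt\right)= \mathbb{E}\!\left(\int_{0}^{T}a(t)^{\top}z(t)\,dt\right),$$
where $a(t):=g(t)^{\top}r_{1}(t)$. Since $r_{1}\in (L^{2,2}_{\mathbb{F}})^{n}$ and $g$ is essentially bounded, $a\in (L^{2,2}_{\mathbb{F}})^{n}$, so the second identity in \eqref{admamamsma} (applied with this $a$) immediately gives
$$\langle A_{g}z,r\rangle_{\mathcal{I}} = \left\langle \mathbb{E}\!\left(\textstyle\int_{0}^{T}a(t)\,dt\,\big|\,\mathcal{F}_{(\cdot)}\right)+\int_{0}^{\cdot}\mathbb{E}\!\left(\textstyle\int_{t}^{T}a(s)\,ds\,\big|\,\mathcal{F}_{t}\right)dt\,,\,z\right\rangle_{\mathcal{I}},$$
which is precisely formula \eqref{mmmmssssaasasasaasasassa}. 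The BSDE representation then follows from \eqref{mmwnrnwrnnrnrrr} applied with terminal datum $g=0$ (in the notation of Proposition \ref{msmamsmasaaaaasasasasa}) and source term $a$: the resulting process $p_{g,r}(t)=\mathbb{E}(\int_{t}^{T}a(s)\,ds\mid\mathcal{F}_{t})$ matches exactly the right-hand side of the explicit formula, and its martingale integrand $q_{g,r}$ is delivered by the martingale representation theorem.

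Part (ii) is identical in structure. Here $\langle B_{h}z,r\rangle_{\mathcal{I}}=\sum_{j=1}^{d}\mathbb{E}\!\left(\int_{0}^{T}(h^{j}(t)z(t))^{\top}r_{2}^{j}(t)\,dt\right)= \mathbb{E}\!\left(\int_{0}^{T}\tilde a(t)^{\top}z(t)\,dt\right)$ with $\tilde a(t):=\sum_{j=1}^{d}h^{j}(t)^{\top}r_{2}^{j}(t)\in (L^{2,2}_{\mathbb{F}})^{n}$, and the same invocation of Proposition \ref{msmamsmasaaaaasasasasa} produces both the conditional-expectation formula \eqref{mmmmssssaasasasasa} and the associated BSDE.

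Finally, the statement for $A_{g}+B_{h}$ follows from linearity of the adjoint together with linearity of the BSDE in its driver: if $(p_{g,r},q_{g,r})$ and $(p_{h,r},q_{h,r})$ solve the two individual BSDEs, then their sum solves the BSDE with driver $-[g(t)^{\top}r_{1}(t)+\sum_{j}h^{j}(t)^{\top}r_{2}^{j}(t)]$ and zero terminal condition, giving the claimed formula for $(A_{g}+B_{h})^{*}r$. The only point requiring a bit of care is the bookkeeping of the identification between $\mathcal{I}^{n}$ and its coordinate triple when reading off the scalar product; once that is set straight the argument is essentially a transcription of Proposition \ref{msmamsmasaaaaasasasasa}, so I do not anticipate a genuine obstacle.
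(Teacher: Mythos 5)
Your proposal is correct and follows essentially the same route as the paper: continuity via the essential boundedness of $g,h^{j}$ together with the embedding of Lemma \ref{meqwnenenqneqbebqbs}, the adjoint formulas by reading off $\langle A_{g}z,r\rangle_{\I}$ and $\langle B_{h}z,r\rangle_{\I}$ and invoking Proposition \ref{msmamsmasaaaaasasasasa} with $a=g^{\top}r_{1}$ (resp. $a=\sum_{j}(h^{j})^{\top}r_{2}^{j}$), and the combined statement by linearity of the adjoint and of the BSDE. No gaps.
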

\begin{proof}  For all $z\in \I^{n}$, we have that 
$$\ba{rcl} \| A_g z\|_{\I}& =&  \left[\EE\left(\int_{0}^{T}|g(t)z(t)|^{2} \dd t  \right) \right]^{\half} \leq n\| g \|_{\infty,\infty} \| z\|_{2,2}\leq n\sqrt{T}\| g \|_{\infty,\infty} \| z\|_{2,\infty},\\[4pt] 
		 \| B_h z\|_{\I}& =&  \sum_{j=1}^{d}  \left[\EE\left(\int_{0}^{T}|h^{j}(t)z(t)|^{2} \dd t  \right) \right]^{\half}  \leq nd\| g \|_{\infty,\infty} \| z\|_{2,2}\leq nd\sqrt{T}\| g \|_{\infty,\infty} \| z\|_{2,\infty}. \ea
$$
Therefore, Lemma \ref{meqwnenenqneqbebqbs}  implies that the linear operators are indeed continuous. We also have that, by Lemma \ref{msmamsmasaaaaasasasasa}:
$$\ba{rcl} \langle r, A_{g}z\rangle_{I} &=&  \EE\left( \int_{0}^{T} r_1(t)^{\top} g(t) z(t)\dd t\right), \\[4pt]
					\;  &=& \left\langle \EE( \int_{0}^{T}g(t)^{\top}  r_1(t) \dd t | \F_{(\cdot)})+\int_{0}^{\cdot}  \EE( \int_{t}^{T}g(s)^{\top}  r_1(s) \dd s|\F_{t}) \dd t , z \right\rangle_{\I},
\ea$$
which, by Lemma \ref{msmamsmasaaaaasasasasa}, implies the expression for  $A_{g}^{*}$ in {\rm(i)}. The corresponding identity for $B_{g}^{*}$ in {\rm(ii)} is obtained by an analogous argument, while assertion {\rm(iii)} is a direct consequence of {\rm(i)}-{\rm(ii)}.
\end{proof}\smallskip\\
\section{Optimal control problem and  Lagrange multipliers}\label{mwmnnsndnadawwwapapapapas}
Let us introduce some notations and assumptions. For a differentiable function $(a,b) \in \RR^{n_{1}}\times \RR^{n_{2}}  \to \psi(a,b) \in \RR^{n_{3}}$ we denote by $\psi_{a}(a,b)\in \RR^{n_{3} \times n_{1}}$ and $\psi_{b}(a,b)\in \RR^{n_{3} \times n_{2}}$ the corresponding Jacobian matrices. Let $f: \Omega \times [0,T] \times \RR^{n}\times \RR^{m} \to \RR^{n}$  and $ \sigma: \Omega \times [0,T] \times \RR^{n}\times \RR^{m} \to \RR^{n\times d}$. In what follows we use the notation $\ds f=(f^{i})_{(1\leq i \leq n)}$ and  $\sigma=(\sigma^{ij})_{1\leq i \leq n, \; 1\leq j \leq d }$, where each $f^{i}$ and $\sigma^{ij}$ is real valued. The columns of $\sigma$ are written $\sigma^{j}$ for $j=1, \hdots, d$.
%
We suppose that: \medskip\\
\textbf{(H1)} The maps $\psi=f^{j}, \sigma^{ij}$ satisfy: \smallskip
\\
{\rm(i) } $\psi$ is $\F_{T} \otimes \mathcal{B}([0,T]\times \RR^{n}\times \RR^{m})$-measurable. \smallskip
\\
{\rm(ii) } For a.a. $(\omega, t) \in \Omega \times  [0,T]$ the mapping $(x,u)\to \psi(\omega, t,x,u)$ is $C^{1}$, the application  $(\omega, t) \in \Omega \times [0,T] \to \psi(\omega,t, \cdot, \cdot) \in C^{1}(\RR^{n}\times \RR^{m})$ is progressively measurable and there exists $c_1 >0$ such that  almost surely in   $(\omega, t)$ 
\be\label{acotamientoderivadas}\left\{\ba{c} |\psi(\omega,t,x,u)|\leq  c_{1} \left( 1+ |x|+|u|\right), \\[4pt]
																			|\psi_{x}(\omega, t,x,u)|+ |\psi_{u}(\omega,t,x,u)| \leq c_{1}, \\[4pt] 	
\ea\right.\ee
\begin{remark} Note that under {\bf(H1)} for every $(x,u) \in \ito\times \control$ we  have that $(\omega, t) \to \psi(\omega, t, x(\omega,t), u(\omega,t))$ is progressively measurable, and so   $\int_{0}^{\cdot} f(\omega, t, x(\omega,t), u(\omega,t)) \dd t$  and  $\int_{0}^{\cdot} \sigma(\omega, t, x(\omega,t), u(\omega,t)) \dd W(t)$ are two a.s. continuous progressively measurable processes. The latter is also a  square integrable continuous martingale.
\end{remark}
Let us consider the application $G: \I^{n} \times (L^{2,2}_{\FF})^{m} \to \I^{n}$ defined by
\be\label{meqmemnnrnnrnr}G(x,u) :=   \int_{0}^{\cdot} f(s,x(s),u(s)) \dd s +  \int_{0}^{\cdot} \sigma(s,x(s), u(s)) \dd W(s)-x(\cdot).\ee

 \begin{lemma}\label{wrnwnrnwnrnqbqbqbq}  Under {\rm \textbf{(H1)}} the mapping $G$ is Lipschitz continuous and Gâteaux differentiable.  Its Gâteaux derivative $DG(x,u): \I^{n} \times (L^{2,2}_{\FF})^{m} \mapsto   \I^{n} $ is given by 
\be\label{mmsnbabbasvvwwwaaass}\ba{rcl} DG(x,u)(z,v)(\cdot) &=&  \int_{0}^{\cdot}\left[   f_{x}(t,x(t),u(t)) z(t) +   f_{u}(t,x(t),u(t)) v(t)\right] \dd t\\[4pt]
 			\;     & \; & +\int_{0}^{\cdot}\left[   \sigma_{x}(t,x(t),u(t)) z(t) +   \sigma_{u}(t,x(t),u(t)) v(t)\right] \dd W(t)-z(\cdot),	\ea\ee
for all $(z,v) \in \I^{n} \times  (L^{2,2}_{\FF})^{m}$. Moreover, for every $u$, $v\in (L^{2,2}_{\FF})^{m}$ and every $x\in\I^n$  we have that  $D G(x,u)(\cdot, v): \ito \mapsto \ito$ is bijective.
 \end{lemma}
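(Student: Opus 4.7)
The plan is to establish the three claims (Lipschitz continuity, Gâteaux differentiability with the stated formula, bijectivity of $DG(x,u)(\cdot,v)$) separately, each reducing to a computation in the coordinate representation $\ito \cong \RR^{n} \times (L^{2,2}_{\FF})^{n} \times (L^{2,2}_{\FF})^{n \times d}$ furnished by Lemma \ref{qnernqnnaaaaAAAa}.

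\textbf{Lipschitz continuity.} Writing $(x_0,x_1,x_2) = I^{-1}(x)$, direct inspection of \eqref{meqmemnnrnnrnr} yields $I^{-1}(G(x,u)) = (-x_0,\, f(\cdot,x,u)-x_1,\, \sigma(\cdot,x,u)-x_2)$. Since the $\I$-norm is exactly the Hilbert product norm of these three coordinates, the claim reduces to showing that $(x,u)\mapsto (f(\cdot,x,u),\sigma(\cdot,x,u))$ is Lipschitz from $\ito \times \control$ into $(L^{2,2}_{\FF})^{n} \times (L^{2,2}_{\FF})^{n \times d}$. This follows from the pointwise estimate $|f(t,x,u)-f(t,x',u')|\leq c_1(|x-x'|+|u-u'|)$ (and analogously for $\sigma$), guaranteed by the uniform bound $|f_x|+|f_u|\leq c_1$ in (H1)(ii); integrating in $L^{2,2}_{\FF}$ and using the continuous embedding $\ito \hookrightarrow (L^{2,\infty}_{\FF})^{n}$ of Lemma \ref{meqwnenenqneqbebqbs} together with $\|x-x'\|_{2,2}\leq \sqrt{T}\,\|x-x'\|_{2,\infty}$ finishes it.

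\textbf{Gâteaux differentiability.} The candidate derivative \eqref{mmsnbabbasvvwwwaaass} has coordinate representation $(-z_0,\, f_x z + f_u v - z_1,\, \sigma_x z + \sigma_u v - z_2)$; identifying it as the Gâteaux derivative amounts to establishing, for fixed directions $(z,v)$, the $L^{2,2}_{\FF}$-convergence
$$\frac{1}{t}\bigl[f(\cdot,x+tz,u+tv) - f(\cdot,x,u)\bigr] \;\longrightarrow\; f_x(\cdot,x,u)\,z + f_u(\cdot,x,u)\,v \quad\text{as } t\to 0,$$
and the analogous statement for $\sigma$. By the fundamental theorem of calculus the difference equals $\int_0^1 [f_x(\cdot,x+\theta tz,u+\theta tv)\,z + f_u(\cdot,x+\theta tz,u+\theta tv)\,v]\,d\theta$; by the $C^1$-regularity in (H1)(ii) the integrand converges pointwise, and it is uniformly bounded by $2c_1(|z|+|v|)\in L^{2,2}_{\FF}$. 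Dominated convergence gives the desired limit.

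\textbf{Bijectivity of $DG(x,u)(\cdot,v)$.} Fix $x\in\ito$, $u,v\in\control$, and let $y\in\ito$ with $I^{-1}(y)=(y_0,y_1,y_2)$. Matching coordinates in the equation $DG(x,u)(z,v)=y$ shows that $z\in\ito$ solves $DG(x,u)(z,v)=y$ if and only if it solves the linear SDE
$$dz(t) = \bigl[f_x(t,x,u)\,z(t) + f_u(t,x,u)\,v(t) - y_1(t)\bigr]\,dt + \bigl[\sigma_x(t,x,u)\,z(t) + \sigma_u(t,x,u)\,v(t) - y_2(t)\bigr]\,dW(t),$$
with initial condition $z(0)=-y_0$. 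The drift and diffusion coefficients are $\FF$-progressively measurable and uniformly bounded by $c_1$ thanks to (H1)(ii), while the forcing terms $f_u v - y_1$ and $\sigma_u v - y_2$ lie in $L^{2,2}_{\FF}$. Classical well-posedness of linear SDEs with bounded coefficients then yields a unique strong solution $z\in(L^{2,\infty}_{\FF})^n$, and by the very form of the equation $z\in\ito$. This gives at once injectivity and surjectivity.

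The routine care-point, as with any Nemytskii-style differentiation, is securing the dominating function needed for the convergence of the difference quotients in Gâteaux differentiability; this is supplied for free by the uniform derivative bound in (H1)(ii), so no step presents a genuine obstacle.
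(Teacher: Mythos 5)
Your proposal is correct and follows essentially the same route as the paper: a first-order expansion (your fundamental-theorem-of-calculus form of the paper's Taylor remainder) controlled by the uniform derivative bound in \textbf{(H1)}(ii) and dominated convergence in $(L^{2,2}_{\FF})$ for the G\^ateaux derivative, and reduction of $DG(x,u)(\cdot,v)=y$ to a linear SDE with bounded coefficients whose classical well-posedness gives bijectivity. Your explicit coordinate representation via $I^{-1}$ and the spelled-out Lipschitz estimate are only presentational additions (the paper treats the Lipschitz claim as immediate), not a different argument.
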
 
 \begin{proof}  Given $z\in \ito$, $v\in \control$ and $\tau>0$, by a first order Taylor expansion of $f$ and $\sigma$ we obtain \footnotesize
 \be\label{memqemnndndasas}\ba{rcl} G(x+\tau z, u+\tau v)- G(x,u) &=&  \tau \int_{0}^{\cdot}\left[   f_{x}(t,x(t),u(t)) z(t) +   f_{u}(t,x(t),u(t)) v(t)+r_{1}(t,\tau)\right] \dd t \\[4pt]
 									\; & \; & + \tau \int_{0}^{\cdot}\left[   \sigma_{x}(t,x(t),u(t)) z(t) +   \sigma_{u}(t,x(t),u(t)) v(t) + r_{2}(t,\tau)\right] \dd W(t)\\[4pt]
									\; & \; & -\tau z(\cdot),\ea\ee\normalsize
where
$$\ba{rcl} r_{1}(\omega,t,\tau) &:=&  \int_{0}^{1}\left[ Df(t,x(t)+ \theta \tau z(t),u(t)+ \theta \tau v(t))- D f(t,x(t),u(t)) \right](z,v)\dd \theta,\\[4pt]
r_{2}(\omega,t,\tau) &:=&  \int_{0}^{1}\left[ D\sigma(t,x(t)+ \theta \tau z(t),u(t)+ \theta \tau v(t))- D \sigma(t,x(t),u(t)) \right](z,v)\dd \theta.
\ea
$$
By {\bf (H1)}{\rm(ii)}, we have that 
\be\label{memqmernansnansddda}   |r_{1}(\omega,t,\tau)|^{2}+ |r_{2}(\omega,t,\tau)|^{2} \leq   c'\left( |z(\omega,t)|^{2}+ |v(\omega,t)|^{2}\right) \hspace{0.2cm} \mbox{for a.a. $(\omega,t)\in \Omega \times [0,T]$.}   \ee
Since  the left hand side of \eqref{memqmernansnansddda} converges a.s. to $0$  as $\tau \downarrow 0$, we deduce with Lemma \ref{meqwnenenqneqbebqbs} and the dominated convergence theorem that
$$ \EE\left(  \int_{0}^{T} |r_{1}(t,\tau)|^{2} \dd t \right)+\EE\left(\int_{0}^{T}  |r_{2}(t,\tau)|^{2}\dd t \right) \to 0 \hspace{0.3cm} \mbox{as $\tau\downarrow 0$},$$
and thus \eqref{mmsnbabbasvvwwwaaass} follows from dividing by $\tau$ in \eqref{memqemnndndasas}, taking the limit $\tau \downarrow 0$ and the definition of convergence in $\ito$. Now, fix $v\in \control$ and $\xi \in \ito$.  Let us prove that there exists $z\in \ito$ such that $DG(x,u)(z,v)=\xi$. By definition, this is equivalent to solving the SDE
$$\ba{rcl} \dd z &=& \left[  f_{x}(t,x(t),u(t)) z(t) +   f_{u}(t,x(t),u(t)) v(t)- \xi_{1} \right] \dd t \\[4pt]
			\; & \; &  + \left[   \sigma_{x}(t,x(t),u(t)) z(t) +   \sigma_{u}(t,x(t),u(t)) v(t)-\xi_2\right]\dd W(t)\\[4pt]
	               z(0)&=& - \xi_{0}.\ea$$
Since $(\xi_1, \xi_{2})\in (L^{2,2}_{\FF})^{n}\times(L^{2,2}_{\FF})^{n\times d}$, under {\bf (H1)} classical results for solvability of linear SDEs (see e.g.  \cite[Theorem 2.1]{Bismut76a}) imply that the above  equation has a unique solution.
 \end{proof}

 \begin{remark}\label{admamdmamnnnssssaaaa} Note that under our assumptions $G$ is Lipschitz.. Therefore, by  classical results {\rm(}see e.g. {\rm \cite[Proposition 2.49]{BonSha})} we have  that  $G$ is Hadamard differentiable, i.e. 
 $$ \lim_{\tau \to 0, \\ (z',v') \to (z,v)} \frac{G(x+\tau z',u+\tau v')(\cdot)-G(x,u)(\cdot)}{\tau}= DG(x,u)(z,v)(\cdot)   \hspace{0.5cm} \mbox{in $\I^{n}$}.$$
In general, it is not clear that $G$ is $C^{1}$. However, if $f$ and $\sigma$ are affine functions of the pair $(x,u)$, it can be easily checked that  $(x,u)\in \ito \times \control  \to DG(x,u) \in L(\ito, \ito)$ is continuous {\rm(}$L(\ito, \ito)$ is the space of bounded linear applications from $\ito$ to $\ito${\rm)}, which implies that $G$ is  continuously differentiable.
 \end{remark}
 
Now, let 
$$ \ell: \Omega \times [0,T] \times \RR^{n} \times \RR^{m} \to \RR,   \hspace{0.4cm} \Phi: \Omega \times \RR^{n} \to \RR \hspace{0.4cm}   \Phi_{E}:  \Omega \times \RR^{n} \to \RR^{n_{E}},   \hspace{0.4cm}  \Phi_{I}:  \Omega \times \RR^{n} \to \RR^{n_{I}}. $$ 
\textbf{(H2)}   We suppose that\\
{\rm(i) } The maps $\ell$ and $\psi= \Phi$, $\Phi_{E}^{i}$, $\Phi_{I}^{j}$  ($1\leq i \leq n_{E}$ and $1\leq j \leq n_{I}$)  are respectively  $\F_{T} \otimes  \mathcal{B}([0,T]\times \RR^{n}\times \RR^{m})$  and $\F_{T} \otimes \mathcal{B}(\RR^{n})$ measurable.\smallskip \\
{\rm(ii) } For a.a. $(\omega,t)$ the maps $(x,u)\to \ell(\omega, t,x,u)$ and     $x\to \psi(\omega, x)$ are $C^1$. The application  $(\omega, t) \in \Omega \times [0,T] \to \ell(\omega,t, \cdot, \cdot) \in C^{1}(\RR^{n}\times \RR^{m})$ is progressively measurable. In addition, there exists $c_{2}>0$   such that  almost surely in   $(\omega, t)$ we have that 
\be\label{crecimientoderivadascosto}\left\{\ba{c}|\ell (\omega , t,x,u)|\leq  c_{2} \left( 1+ |x|+|u|\right)^{2}, \\[4pt]
 |\ell_{x}(\omega,t,x,u)|+ |\ell_{u}(\omega, t,x,u)|\leq  c_{2} \left( 1+ |x|+|u|\right), \\[4pt]
 \ |\psi (\omega, x)|\leq  c_{2} \left( 1+ |x|\right)^{2}, \; |\psi_{x}(\omega,x)|\leq  c_{2} \left( 1+ |x|\right). 
 \ea\right.\ee
 We define $F: \ito \times \control \to \RR$, $G_{E}: \ito  \to \RR^{n_{E}}$ and  $G_{I}: \ito \to \RR^{n_{I}}$ as
\be\label{mamsmnnrnnrnrs}
\ba{rcl}    F(x,u)&:=&  \EE \left( \int_{0}^{T}\ell(t,x(t),u(t)) \dd t + \Phi(x(T)) \right), \\[4pt]
	       G_{E}^{i}(x)&:=&    \EE \left( \Phi_{E}^{i} (x(T)) \right) \hspace{0.5cm} \forall \; i=1,\hdots, n_{E}, \\[4pt]
	        G_{I}^{j}(x)&:=&    \EE \left( \Phi_{I}^{j} (x(T)) \right) \hspace{0.5cm} \forall \; j=1,\hdots, n_{I}.\ea
\ee
\begin{lemma}\label{qwemqemqemaaaaa} The functions $F$, $G_{E}$ and $G_{I}$ are continuously  differentiable (in the Fréchet sense) and $\forall \; (x,u)$, $(z,v) \in \ito \times \control$ we have that
\be\label{admadmamdada}\ba{rcl}
 DF(x,u)(z,v)	& = & \EE\left(\int_{0}^{T}\left[ \ell_x(t,x(t),u(t)) z(t) + \ell_{u}(t,x(t),u(t))  v(t)\right] \dd t+  \Phi_{x}(x(T)) z(T) \right),\;\\
 DG_{E}^{i}(x,u)(z,v) 	& = & \EE\left(   (\Phi_{E}^{i})_{x}(x(T)) z(T) \right) \hspace{0.5cm} \forall \; 1\leq i \leq n_{E},\\
 DG_{I}^{j}(x,u)(z,v) 	& = & \EE\left(   (\Phi_{I}^{j})_{x}(x(T)) z(T) \right) \hspace{0.5cm} \forall \; 1\leq j \leq n_{I}.
 \ea\ee
 \end{lemma}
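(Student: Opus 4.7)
The plan is to proceed in three stages: first, verify that $F$, $G_E$, $G_I$ are well defined using the quadratic growth assumption \textbf{(H2)(ii)}; second, compute a candidate G\^ateaux derivative by a first-order Taylor expansion and dominated convergence, mirroring the proof of Lemma \ref{wrnwnrnwnrnqbqbqbq}; and third, establish continuity of the derivative map, which promotes G\^ateaux to Fr\'echet differentiability of class $C^{1}$. The arguments for $G_E^i$ and $G_I^j$ will be strictly simpler than the one for $F$, since they involve only the terminal value $x(T)$, so I will focus on $F$. For well-definedness, the embedding $\ito \hookrightarrow (L^{2,\infty}_\FF)^n$ from Lemma \ref{meqwnenenqneqbebqbs} yields $\EE|x(t)|^2 \leq c\|x\|_\ito^2$ uniformly in $t$; combined with the bound $|\ell(t,x,u)| \leq c_2(1+|x|+|u|)^2$ and $u\in\control$, this gives integrability of $\ell(\cdot,x(\cdot),u(\cdot))$ over $\Omega\times[0,T]$, and analogously for $\Phi(x(T))$, $\Phi_E^i(x(T))$, $\Phi_I^j(x(T))$.

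For the derivative, I would fix $(x,u)$ and $(z,v)\in\ito\times\control$, and for $\tau>0$ small write
\[ \frac{F(x+\tau z, u+\tau v) - F(x,u)}{\tau} = \EE\int_0^T\!\bigl[\ell_x(t,x,u)z + \ell_u(t,x,u)v\bigr]\dd t + \EE[\Phi_x(x(T))z(T)] + R(\tau), \]
where $R(\tau)$ collects the mean-value remainders, of the form $\EE\int_0^T\int_0^1[\ell_x(t,x+\theta\tau z,u+\theta\tau v)-\ell_x(t,x,u)]z\,d\theta\,dt$ and similar contributions for $\ell_u$ and $\Phi_x$. The growth bounds in \eqref{crecimientoderivadascosto} supply a $\tau$-independent $L^1$-majorant of the form $C(1+|x|+|u|+|z|+|v|)(|z|+|v|)$ (integrability follows from Cauchy--Schwarz and the embedding into $(L^{2,\infty}_\FF)^n$), while continuity of the derivatives forces the integrand to zero pointwise as $\tau\downarrow 0$; dominated convergence gives $R(\tau)\to 0$, yielding the formula \eqref{admadmamdada}. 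The same estimates show that the candidate derivative defines a bounded linear functional on $\ito\times\control$.

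The main step, and the principal technical obstacle, is proving continuity of $(x,u)\mapsto DF(x,u)$ as a map into $(\ito\times\control)^{*}$. This reduces to continuity of the Nemytskii-type operators $(x,u)\mapsto \ell_x(\cdot,x(\cdot),u(\cdot))$ and $(x,u)\mapsto \ell_u(\cdot,x(\cdot),u(\cdot))$ from $\ito\times\control$ into $(L^{2,2}_\FF)^n$ and $(L^{2,2}_\FF)^m$ respectively, together with continuity of $x\mapsto \Phi_x(x(T))$ from $\ito$ into $(L^2_{\F_T})^n$. Given $(x_n,u_n)\to(x,u)$, my approach is to extract a subsequence converging $\PP\otimes dt$-a.e.\ (using the embedding into $(L^{2,\infty}_\FF)^n$), invoke continuity of $\ell_x$ to get pointwise convergence of $\ell_x(\cdot,x_n,u_n)$ to $\ell_x(\cdot,x,u)$, and use the bound $|\ell_x|^2 \leq 3c_2^2(1+|x_n|^2+|u_n|^2)$ together with $L^1$-convergence of these majorants to invoke Vitali's convergence theorem and deduce $L^2$-convergence; the standard subsequence argument then promotes this to convergence of the full sequence, and an analogous treatment handles $\ell_u$, $\Phi_x$, $(\Phi_E^i)_x$, and $(\Phi_I^j)_x$. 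Once continuity of $DF$, $DG_E$, $DG_I$ is established, the classical fact that a G\^ateaux-differentiable map with continuous G\^ateaux derivative is Fr\'echet differentiable of class $C^{1}$ (cf.\ \cite{BonSha}) closes the proof.
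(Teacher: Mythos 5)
Your proposal is correct and follows essentially the same route as the paper: G\^ateaux differentiability via a first-order Taylor expansion and dominated convergence (mirroring Lemma \ref{wrnwnrnwnrnqbqbqbq}), then continuity of the derivative map from the linear-growth bounds in \eqref{crecimientoderivadascosto}, and finally the classical ``continuous G\^ateaux derivative implies $C^1$ Fr\'echet'' argument. The only difference is cosmetic: the paper bounds $|DF(x,u)(z,v)-DF(x',u')(z,v)|$ directly by a quantity $w(x',u')$ and invokes dominated convergence, whereas you phrase the same step as continuity of the Nemytskii operators via a subsequence-plus-Vitali argument, which if anything is a slightly more careful justification of the same limit.
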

 \begin{proof} The proof that  $F$ is Gâteaux differentiable and that its Gâteaux derivative satisfies the first equation in \eqref{admadmamdada} follows the same lines as the proof of Lemma \ref{wrnwnrnwnrnqbqbqbq}. Now,  note that given $(z, v) \in \ito \times \control$, with $\|z\|_{\I}=\|v\|_{2,2}=1$,  for all $(x,u)$, $(x',u') \in \ito \times \control$ \small
 $$\ba{rcl} |  DF(x,u)(z,v)-DF(x',u')(z,v)|& \leq &\| z \|_{2,\infty}\left( \EE\left[ \int_{0}^{T}\left| \ell_{x}(t,x(t),u(t))- \ell_{x}(t,x'(t),u'(t))\right| \dd t \right]^{2} \right)^{\half}\\
 							\; & \; & + \| z \|_{2,\infty}\left( \EE\left[ \left|\Phi_{x}(x(T)) - \Phi_{x}(x'(T)) \right|^{2} \right] \right)^{\half}\\
							\; & \; & +\| v \|_{2,2}\left( \EE\left[ \int_{0}^{T}\left| \ell_{u}(t,x(t),u(t))-\ell_{u}(t,x'(t),u'(t))\right|^{2} \dd t \right]  \right)^{\half}.\ea$$
\normalsize
Therefore, by  Lemma \ref{qnernqnnaaaaAAAa} we get that \small
$$ \sup_{(z,v) \in \ito \times \control  \; ; \; \|z\|_{\I}=\|v\|_{2,2}=1} |  DF(x,u)(z,v)-DF(x',u')(z,v)|^{2}\leq c w(x',u'),$$\normalsize
where \small
 $$\ba{ll}w(x',u'):= & \EE\left( \int_{0}^{T}\left[ \left| \ell_{x}(t,x(t),u(t))- \ell_{x}(t,x'(t),u'(t))\right|^{2} +\left| \ell_{u}(t,x(t),u(t))-\ell_{u}(t,x'(t),u'(t))\right|^{2} \right]\dd t \right.\\[4pt]
 	\; & \left. +  \left|\Phi_{x}(x(T)) - \Phi_{x}(x'(T)) \right|^{2}\right).\ea $$\normalsize
Since $\ell_{x}$, $\ell_u$ and $\Phi_x$ satisfy the linear growth property in \eqref{crecimientoderivadascosto}, we have by dominated convergence that $w(x',u') \to 0$ as $\|x'-x\|_{\I}+\| u'-u\|_{2,2}\to 0$. Thus $DF$ is continuous and therefore $F$ is Fréchet differentiable. The proof of the analogous result for $G_{E}$ and $G_{I}$ follows the same lines.\end{proof}

Let $U \subseteq \RR^{m}$ be a non-empty, closed and convex set and define 
\be\label{qmemqmenandadds}\U:= \left\{ u \in \control \; ; \; u(\omega,t) \in U \; \; \mbox{for a.a. $(\omega,t)\in \Omega \times [0,T]$}\right\}.\ee
We consider the optimal control problem 
$$ \ba{l}\mbox{Min}_{x\in \ito, u\in \control} \ \   F(x,u)  \  \ \mbox{s.t. } \  G(x,u) +x_{0}= 0, \ \   G_{E}(x)=0  \hspace{0.2cm} \mbox{and } \;   G_{I}(x) \leq  0, \; \; u\in  \U.\ea \eqno{(\mathcal{SP})}$$
\begin{remark}\label{qemqmemnsnsnnsnsnfbfbf} Usually the optimal control problem above is stated  only in terms of $u$. Indeed, under our assumptions, for every $u \in \control$ there exists a unique $x[u] \in \I^{n}$ such that $G(x[u], u)+x_0=0$. Therefore, problem $(\mathcal{SP})$ can be equivalently written as 
$$ \ba{l}\mbox{{\rm Min} }_u \ \   F(x[u],u)  \  \ \mbox{{\rm s.t. }} \    G_{E}(x[u])=0 \hspace{0.2cm} \mbox{{\rm and }} \;   G_{I}(x[u]) \leq  0, \; \; u\in  \U.\ea \eqno{(\mathcal{SP'})}$$
We have preferred to consider the minimization problem in terms of the pair $(x,u)$ and thus to maintain explicitly the  constraint $G(x,u)+x_{0}=0$  in order to associate a Lagrange multiplier to it. 
\end{remark}
\begin{definition}{\rm(i)} We say that $(x,u)\in \ito \times \control$ is feasible for $(\mathcal{SP})$ if $G(x,u) + x_0= 0$, $G_{E}(x)=0$,  $G_{I}(x) \leq  0$ and $u \in  \U$.  The set of feasible pairs for problem  $(\mathcal{SP})$  is denoted by $F(\mathcal{SP})$.\smallskip\\
{\rm(ii)} We say that $(\bar{x}, \bar{u}) \in F(\mathcal{SP})$ is a local solution of $(\mathcal{SP})$ iff $\exists \; \eps>0$ such that $F(\bar{x},\bar{u})\leq F(x,u)$ for all $(x,u)\in F(\mathcal{SP})$ satisfying that $\|x-\bar{x}\|_{\I}+ \|u-\bar{u}\|_{2,2}\leq \eps$.
\end{definition}

\subsection{Weak-Pontryagin multipliers and Lagrange multipliers}\label{descripcionpmp}
Given $\alpha \geq 0$ the {\it Hamiltonian} $H[\alpha]: \Omega \times [0,T] \times \RR^{n} \times \RR^{m} \times \RR^{n} \times \RR^{n\times d} \to \RR$ is defined as
\be\label{mamdndnbbbrbbrbr} 
H[\alpha](\omega,t,x, u, p,q):= \alpha \ell(\omega,t, x, u) +   p^{\top} f(\omega,t, x,u)  + \sum_{j=1}^{d}   (q^{j})^{\top} \sigma^{j}(\omega, t,x,u).
\ee

\begin{definition}[weak-Pontryagin multiplier]\label{mameqnennenee} We say that  $0\neq (\bar{\alpha}, \bar{p},\bar{q}, \bar{\lambda}_{E}, \bar{\lambda}_{I}) \in  \RR \times \ito \times (L^{2,2}_{\FF})^{n\times d}  \times \RR^{n_{E}} \times \RR^{n_{I}}$  is a generalized weak-Pontryagin multiplier at $(\bar{x}, \bar{u})$ if 
\be\label{annqenbrbbrbsbsbsbsaaa}
\ba{rcl} 
\dd \bar{p}(t) &=& -H_{x}[\bar{\alpha}](t,\bar{x}(t), \bar{u}(t), \bar{p}(t), \bar{q}(t))^{ \top} \dd t +  \bar{q}(t) \dd W(t),\\[4pt]
       \bar{p}(T)&=&\bar{\alpha}\Phi_{x}(\bar{x}(T))^{\top}   +  (\Phi_{E})_{x}(\bar{x}(T))^{\top}\bar{\lambda}_{E}+   (\Phi_{I})_{x}(\bar{x}(T))^{\top}\bar{\lambda}_{I}   ,\\[4pt]
       0 &\leq &  H_{u}[\bar{\alpha}](\omega,t,\bar{x}(t), \bar{u}(t), \bar{p}(t), \bar{q}(t))(v-\bar{u}(\omega,t)) \hspace{0.3cm} \mbox{{\rm   $\forall v\in U$ a.a.} $(\omega,t)\in \Omega \times [0,T]$,} \\[4pt]
       0  &< & |\bar{\alpha}| + | \bar{\lambda}_{I}| + | \bar{\lambda}_{E}|,\\[4pt]
        0&=&  \bar{\lambda}_{I}^{j} G_{I}^{j}(\bar{x}(T)) \hspace{0.4cm} \forall \; j=1, \hdots, n_{I}, \\[4pt]
        0&\leq & \bar{\lambda}_{I}^{j}  \hspace{0.4cm} \forall \; j =1, \hdots, n_{I} \hspace{0.2cm} \mbox{and } \; 0\leq \bar{\alpha}.
\ea
\ee
If $\bar{\alpha}> 0$ (and therefore can be normalized to $\bar{\alpha}=1$), we say that $0\neq (\bar{p},\bar{q}, \bar{\lambda}_{E}, \bar{\lambda}_{I})$ is a  weak-Pontryagin multiplier at $(\bar{x}, \bar{u})$. The set of weak-Pontryagin multipliers is denoted by $\Lambda_{wP}(\bar{x},\bar{u})$.
\end{definition}  
 It is well known that the following stochastic {\it weak-Pontryagin minimum principle} holds (see e.g.  \cite{Peng90,MouYong07} and \cite[Theorem 3.2, Chapter 3]{YongZhou})
 \begin{theorem}[weak-Pontryagin minimum principle]\label{pmp}
  Assume that {\bf(H1)}-{\bf(H2)} hold and let $(\bar{x},\bar{u}) \in \ito \times  \control$ be a local solution of $(SP)$.  Then, there exists at least one weak-Pontryagin multiplier at $(\bar{x}, \bar{u})$.
 \end{theorem}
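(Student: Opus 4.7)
The plan is to cast $(\mathcal{SP})$ as an abstract constrained optimization problem over the Hilbert space $\ito\times\control$, with the infinite-dimensional equality constraint $G(x,u)+x_{0}=0$ in $\ito$, the finitely many scalar equalities and inequalities $G_{E}(x)=0$, $G_{I}(x)\leq 0$, and the convex-cone constraint $u\in\U$. The differentiability framework of Lemmas \ref{wrnwnrnwnrnqbqbqbq}--\ref{qwemqemqemaaaaa} supplies the Hadamard/Fr\'echet derivatives of $F,G,G_{E},G_{I}$ at $(\bar{x},\bar{u})$, and the key surjectivity fact is the bijectivity of $D_{x}G(\bar{x},\bar{u}):\ito\to\ito$ furnished by the final statement of Lemma \ref{wrnwnrnwnrnqbqbqbq}.

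First I would apply a generalized Karush-Kuhn-Tucker theorem for Hilbert-space optimization with mixed constraints and a closed convex-cone constraint on the controls (e.g.\ \cite[Thm.\ 3.9]{BonSha}). The bijectivity of $D_{x}G(\bar{x},\bar{u})$ plays the role of the Robinson qualification for the SDE constraint; a Mangasarian-Fromovitz condition on $(\Phi_{E},\Phi_{I})$ at $\bar{x}(T)$ then allows one to normalize the cost multiplier to $\bar{\alpha}=1$ and produces $\lambda\in(\ito)^{\ast}\simeq\ito$, $\bar{\lambda}_{E}\in\R^{n_{E}}$, $\bar{\lambda}_{I}\in\R^{n_{I}}_{+}$ with $\bar{\lambda}_{I}^{j}G_{I}^{j}(\bar{x}(T))=0$, such that the Lagrangian
\begin{equation*}
L(x,u):=F(x,u)+\langle\lambda,G(x,u)+x_{0}\rangle_{\I}+\bar{\lambda}_{E}^{\top}G_{E}(x)+\bar{\lambda}_{I}^{\top}G_{I}(x)
\end{equation*}
satisfies $D_{x}L(\bar{x},\bar{u})=0$ in $\ito$ and $\langle D_{u}L(\bar{x},\bar{u}),v-\bar{u}\rangle_{L^{2}}\geq 0$ for every $v\in\U$. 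Next I would decompose the It\^{o}-valued multiplier as $\lambda(\cdot)=\bar{p}(0)+\int_{0}^{\cdot}\bar{p}(s)\,ds+\int_{0}^{\cdot}\bar{q}(s)\,dW(s)$ with $(\bar{p},\bar{q})\in\ito\times(L^{2,2}_{\FF})^{n\times d}$. Since the linearization $D_{x}G(\bar{x},\bar{u})$ is precisely $A_{f_{x}}+B_{\sigma_{x}}-\mathrm{Id}$ with $f_{x},\sigma_{x}$ evaluated along $(\bar{x},\bar{u})$, Corollary \ref{mqqmqqsaospapsa} translates $D_{x}L(\bar{x},\bar{u})=0$ into the adjoint BSDE
\begin{equation*}
d\bar{p}=-\bigl[f_{x}^{\top}\bar{p}+\textstyle\sum_{j=1}^{d}(\sigma_{x}^{j})^{\top}\bar{q}^{j}+\ell_{x}^{\top}\bigr]dt+\bar{q}\,dW,\qquad \bar{p}(T)=\Phi_{x}^{\top}+(\Phi_{E})_{x}^{\top}\bar{\lambda}_{E}+(\Phi_{I})_{x}^{\top}\bar{\lambda}_{I},
\end{equation*}
with all coefficients evaluated at $(\bar{x}(t),\bar{u}(t))$ or $\bar{x}(T)$. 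A parallel computation on $D_{u}L(\bar{x},\bar{u})$ rewrites the variational inequality as $H_{u}[1](\omega,t,\bar{x},\bar{u},\bar{p},\bar{q})(v-\bar{u})\geq 0$ for every $v\in U$, a.s.-a.e.\ in $(\omega,t)$, which together with the BSDE above is exactly the system \eqref{annqenbrbbrbsbsbsbsaaa} of Definition \ref{mameqnennenee}.

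The hard part is the very first step: in the fully nonlinear setting $G$ is only Hadamard differentiable (Remark \ref{admamdmamnnnssssaaaa}), whereas the standard Hilbert-space KKT theorems demand Fr\'echet differentiability and closed-range surjectivity. One workaround is to first eliminate the SDE via the well-posed map $u\mapsto x[u]$ of Remark \ref{qemqmemnsnsnnsnsnfbfbf} and apply the multiplier theorem to the reduced problem in $u$ alone, after separately establishing Hadamard differentiability of $x[\cdot]$ and adapting the KKT statement to Hadamard-differentiable data. Alternatively, and as actually done in the cited references \cite{Peng90,MouYong07,YongZhou}, one bypasses abstract Lagrange theory entirely and proceeds by admissible convex variations $u_{\tau}=\bar{u}+\tau(v-\bar{u})\in\U$, deriving the linearized state equation for $z=\lim_{\tau\downarrow 0}(x[u_{\tau}]-\bar{x})/\tau$, invoking Ekeland's variational principle to handle the terminal constraints, and then applying It\^{o}'s formula to $\bar{p}^{\top}z$ with $(\bar{p},\bar{q})$ defined a priori by the adjoint BSDE, thereby recovering the Hamiltonian inequality. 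In the linear-convex subclasses of Section \ref{sens} the abstract route through paragraphs one and two does succeed, which is precisely the mechanism that the authors exploit in Theorem \ref{sensibilidad1}.
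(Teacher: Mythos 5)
There is no internal proof to compare against: the paper states Theorem \ref{pmp} as a known result and defers entirely to \cite{Peng90}, \cite{MouYong07} and \cite[Chapter 3]{YongZhou}. Your third paragraph --- convex variations $u_{\tau}=\bar{u}+\tau(v-\bar{u})$, the linearized state equation, Ekeland's variational principle for the expectation constraints on $x(T)$, and It\^o's formula applied to $\bar{p}^{\top}z$ with $(\bar{p},\bar{q})$ defined a priori by the adjoint BSDE --- is exactly the route of those references, so on that point you and the paper are in the same position: neither supplies the argument, both point to it.

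The genuine gap is in the route you present as the main plan. The abstract KKT/Lagrange theorems you invoke require the constraint map to be continuously Fr\'echet differentiable together with a Robinson-type regularity condition, whereas for nonlinear $f,\sigma$ the map $G$ is only Lipschitz and G\^ateaux/Hadamard differentiable (Lemma \ref{wrnwnrnwnrnqbqbqbq} and Remark \ref{admamdmamnnnssssaaaa}); the authors themselves point out in Remark \ref{mqmennnnaasasasa}{\rm(iv)} that it is not clear the weak Pontryagin principle for nonlinear dynamics can be derived by the Lagrange-multiplier method, precisely because of this apparent lack of $C^{1}$ smoothness, and bijectivity of $D_{x}G(\bar{x},\bar{u})(\cdot,v)$ does not repair it. You acknowledge this, but the workarounds you mention (reducing to $u\mapsto x[u]$ and ``adapting the KKT statement to Hadamard-differentiable data'') are not carried out and are exactly the open point. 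Two further mismatches with the statement: the qualification needed in the presence of $G_{E},G_{I}$ is surjectivity of the \emph{combined} linearization $(DG,DG_{E})$ plus a Slater/MF condition (compare condition $(S)$ in Theorem \ref{madnandeueueuaasas}), not just invertibility of $D_{x}G$; and your normalization $\bar{\alpha}=1$ via a Mangasarian--Fromovitz condition on $(\Phi_{E},\Phi_{I})$ imports a hypothesis that Theorem \ref{pmp} does not assume, so either normality must be taken from the cited references or one only obtains a generalized (possibly abnormal) multiplier. In short, the abstract route closes only in the linear-dynamics setting (Theorem \ref{sensibilidad1}, and Remark \ref{mqmennnnaasasasa}{\rm(iv)} for nonconvex costs), and in the generality of Theorem \ref{pmp} your proposal ultimately rests on the classical variational proof that you, like the paper, only cite.
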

\begin{remark} {\rm(i)} In view of Remark \ref{qemqmemnsnsnnsnsnfbfbf},   Pontryagin principles are usually stated for a local solution $\bar{u}$ of $(\mathcal{SP'})$.  However, we easily check that $\bar{u}$ is a local solution of  $(\mathcal{SP'})$ if and only if $(\bar{x}, \bar{u})$ is a local solution of  $(\mathcal{SP})$.\smallskip\\
{\rm(ii)} We called  the result of Theorem \ref{pmp} a weak-Pontryagin minimum principle, since in general more information can be obtained. In fact, even when $U$ is not convex, under a Lipschitz type assumption on the second derivatives of the data,  a second pair of adjoint processes can be introduced in such a manner that the optimal $\bar{u}$ minimizes an associated Hamiltonian in $U$. In the particular case when $U$ is convex, \eqref{annqenbrbbrbsbsbsbsaaa} is an  easy consequence of this result {\rm(}see e.g. {\rm\cite{Peng90}} and {\rm \cite[Chapter 3]{YongZhou}}{\rm)}.\smallskip\\
\end{remark}
The {\it Lagrangian} $\L: \ito \times \control \times \ito \times \RR \times \RR^{n_{E}} \times \RR^{n_{I}} \to \RR$ associated to problem $(\mathcal{SP})$ is defined by
\be\label{amsmmnrnrnrnrds}
\L(x,u,\alpha, \lambda_{\I}, \lambda_{E}, \lambda_{I}):=  \alpha F(x,u) + \langle \lambda_{\I}, G(x,u)+x_0\rangle_{\I} +   \lambda_{E}^{\top}  G_{E}(x)    +  \lambda_{I}^{\top} G_{I}(x),
\ee
where $G: \ito \times \control \mapsto \ito$ is defined in \eqref{meqmemnnrnnrnr} and  $F$, $G_{E}$ and $G_{I}$ are defined in \eqref{mamsmnnrnnrnrs}. 
\begin{definition}\label{mmmsskkakakaaaaaa} We say that $0 \neq (\bar{\alpha}, \bar{\lambda}_{\I}, \bar{\lambda}_{E}, \bar{\lambda}_{I})$ is a generalized Lagrange multiplier at $(\bar{x},\bar{u})$ if
\be\label{mmdammiirnr}\ba{rcl} 
0&=&D_{x}  \L(\bar{x},\bar{u},\bar{\alpha},\bar{\lambda}_{\I} ,\bar{\lambda}_{E}, \bar{\lambda}_{I}), \\[4pt]
0&\leq& D_{u}  \L(\bar{x},\bar{u},\bar{\alpha},\bar{\lambda}_{\I} ,\bar{\lambda}_{E}, \bar{\lambda}_{I})(v-\bar{u}) \hspace{0.5cm} \forall v \in  \U, \\[4pt]
0  &< & |\bar{\alpha}| + |\bar{\lambda}_{I}| + |\bar{\lambda}_{E}|,\\[4pt]
        0&=&   \bar{\lambda}_{I}^{j} G_{I}^{j}( \bar{x}(T)) \hspace{0.4cm} \forall \; j =1, \hdots, n_{I}, \\[4pt]
        0&\leq & \bar{\lambda}_{I}^{j}  \hspace{0.4cm} \forall \; j=1, \hdots, n_{I} \hspace{0.2cm} \mbox{and } \; 0\leq \bar{\alpha}.
\ea\ee 
If $\bar{\alpha} > 0$ (and therefore can be normalized to $\bar{\alpha}=1$) we will say that $0\neq (\bar{\lambda}_{\I}, \bar{\lambda}_{E}, \bar{\lambda}_{I})$ is a Lagrange multiplier at $(\bar{x},\bar{u})$ and we will eliminate the $\bar{\alpha}$ from the arguments of $\L$. The set of Lagrange multipliers is denoted by $\Lambda_{L}(\bar{x},\bar{u})$.
\end{definition}
\begin{remark}  If no final constraints are present, we will eliminate $(\lambda_{E}, \lambda_{I})$ from the arguments of $\L$
\end{remark}
Using the theoretical framework introduced in the previous sections we can prove the following 
\begin{theorem} 
\label{Teoidentificacion}
Let $(\bar{x},\bar{u}) \in  F(\mathcal{SP})$.
If $( \bar{\alpha}, \bar{p}, \bar{q}, \bar{\lambda}_{E}, \bar{\lambda}_{I})$ is a generalized weak-Pontryagin multiplier at $(\bar{x},\bar{u})$  then 
 $(\bar{\alpha}, \bar{\lambda}_{\I}, \bar{\lambda}_{E}, \bar{\lambda}_{I})$ is a generalized Lagrange multiplier at $(\bar{x},\bar{u})$, where
 \be\label{mmmdhhhadhhahahbrbr}
\bar{\lambda}_{\I}(\cdot):= \bar{p}(0)+ \int_{0}^{\cdot} \bar{p}(s) \dd s +  \int_{0}^{\cdot} \bar{q}(s) \dd W(s). \ee 
Conversely, if  $(\bar{\alpha}, \bar{\lambda}_{\I}, \bar{\lambda}_{E}, \bar{\lambda}_{I})$ is a generalized Lagrange multiplier at $(\bar{x},\bar{u})$   then $(\bar{\lambda}_{\I})_{0}= (\bar{\lambda}_{\I})_{1}(0)$ and $( \bar{\alpha},(\bar{\lambda}_{\I})_{1}, (\bar{\lambda}_{\I})_{2}, \bar{\lambda}_{E}, \bar{\lambda}_{I})$ is a generalized weak-Pontryagin multiplier  at $(\bar{x},\bar{u})$.

\end{theorem}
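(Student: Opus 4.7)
Given the weak-Pontryagin multiplier $(\bar{\alpha},\bar{p},\bar{q},\bar{\lambda}_{E},\bar{\lambda}_{I})$, I would define $\bar{\lambda}_{\I}$ by \eqref{mmmdhhhadhhahahbrbr}, so that its preimage under $I$ is exactly $(\bar{p}(0),\bar{p},\bar{q})$, and then compute $D\L(\bar{x},\bar{u},\bar{\alpha},\bar{\lambda}_{\I},\bar{\lambda}_{E},\bar{\lambda}_{I})(z,v)$ term by term using Lemmas \ref{wrnwnrnwnrnqbqbqbq} and \ref{qwemqemqemaaaaa}. The only delicate piece is $\langle\bar{\lambda}_{\I},DG(\bar{x},\bar{u})(z,v)\rangle_{\I}$: expanding it via the third form of the scalar product on $\I^{n}$ and then applying It\^o's formula (Lemma \ref{usoito}) to the pair of It\^o processes $\bar{p}$ (whose drift is $-H_{x}[\bar{\alpha}]^{\top}$ and whose diffusion is $\bar{q}$) and $z$ converts the volume integrals in $z_{1},z_{2}$ into $\EE[\bar{p}(T)^{\top}z(T)]$ minus $\EE\int_{0}^{T}H_{x}[\bar{\alpha}]\,z\,dt$. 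Expanding $H_{x}[\bar{\alpha}]=\bar{\alpha}\ell_{x}+\bar{p}^{\top}f_{x}+\sum_{j}(\bar{q}^{j})^{\top}\sigma_{x}^{j}$ kills the $f_{x}z$ and $\sigma_{x}z$ terms originally carried by $DG$, and the transversality relation on $\bar{p}(T)$ in \eqref{annqenbrbbrbsbsbsbsaaa} kills the $z(T)$ contributions coming from $DF$, $DG_{E}$ and $DG_{I}$. What survives is
\[
D\L(\bar{x},\bar{u},\bar{\alpha},\bar{\lambda}_{\I},\bar{\lambda}_{E},\bar{\lambda}_{I})(z,v)=\EE\int_{0}^{T}H_{u}[\bar{\alpha}](t,\bar{x}(t),\bar{u}(t),\bar{p}(t),\bar{q}(t))\,v(t)\,dt.
\]
Setting $v=0$ yields $D_{x}\L=0$, and testing against $v-\bar{u}$ with $v\in\U$ together with the pointwise Hamiltonian inequality of Definition \ref{mameqnennenee} gives $D_{u}\L(v-\bar{u})\geq 0$; the sign and complementarity conditions transfer verbatim.

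\textbf{Converse direction.} From a generalized Lagrange multiplier $(\bar{\alpha},\bar{\lambda}_{\I},\bar{\lambda}_{E},\bar{\lambda}_{I})$, Lemma \ref{qnernqnnaaaaAAAa} produces a unique decomposition $\bar{\lambda}_{\I}=I(\mu_{0},\bar{p},\bar{q})$ with $\mu_{0}\in\R^{n}$, $\bar{p}\in(L^{2,2}_{\FF})^{n}$, $\bar{q}\in(L^{2,2}_{\FF})^{n\times d}$; the task is to show $\mu_{0}=\bar{p}(0)$ and that $(\bar{p},\bar{q})$ solves the BSDE in Definition \ref{mameqnennenee}. Using $DG(\bar{x},\bar{u})(z,0)=-z+A_{f_{x}}z+B_{\sigma_{x}}z$ from Lemma \ref{wrnwnrnwnrnqbqbqbq} together with Corollary \ref{mqqmqqsaospapsa} and identity \eqref{mmwnrnwrnnrnrrr} of Proposition \ref{msmamsmasaaaaasasasasa}, the condition $D_{x}\L(z)=0$ for every $z\in\I^{n}$ rewrites as
\[
\langle\,\rho-\bar{\lambda}_{\I}+(A_{f_{x}}+B_{\sigma_{x}})^{*}\bar{\lambda}_{\I},\,z\,\rangle_{\I}=0\qquad\forall\, z\in\I^{n},
\]
where both $\rho$ and $(A_{f_{x}}+B_{\sigma_{x}})^{*}\bar{\lambda}_{\I}$ are explicit It\^o processes given by the associated BSDEs. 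Riesz representation forces equality in $\I^{n}$, and injectivity of $I$ then lets me match initial values, drifts and diffusions separately. Matching drifts yields exactly $\bar{p}=p_{\rho}+p_{*}$ as processes, from which $d\bar{p}=-H_{x}[\bar{\alpha}]^{\top}dt+\bar{q}\,dW$ and the correct terminal condition both follow; matching initial values gives $\mu_{0}=\bar{p}(0)$. Finally, specializing $z=0$ turns $D_{u}\L(v-\bar{u})\geq 0$ into $\EE\int_{0}^{T}H_{u}[\bar{\alpha}](v-\bar{u})\,dt\geq 0$ for all $v\in\U$, and a standard localization argument (varying $v=\bar{u}+\phi(u'-\bar{u})$ for $u'\in U$ and progressively measurable $\phi:\Omega\times[0,T]\to[0,1]$) upgrades this to the pointwise variational inequality required by \eqref{annqenbrbbrbsbsbsbsaaa}.

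\textbf{Main obstacle.} The computational heart of both directions is the same: turning $\langle\bar{\lambda}_{\I},DG(\bar{x},\bar{u})(z,v)\rangle_{\I}$ into boundary and drift terms that line up with the Hamiltonian's partial derivatives. The forward direction reduces to a single application of It\^o's formula; the converse is where Corollary \ref{mqqmqqsaospapsa} does the real work by packaging the adjoints of $A_{f_{x}}$ and $B_{\sigma_{x}}$ as BSDEs, so that injectivity of $I$ mechanically recovers the BSDE satisfied by $(\bar{p},\bar{q})$ together with the somewhat unexpected initial-value identity $\mu_{0}=\bar{p}(0)$ that has no analogue in the classical BSDE statement. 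Beyond that, the remaining difficulty is purely bookkeeping: keeping transposes and summations straight so that $H_{x}$ and $H_{u}$ assemble correctly.
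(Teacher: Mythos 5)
Your argument is correct, and the converse direction is essentially the paper's own proof: you expand $D_x\L$ through Proposition \ref{msmamsmasaaaaasasasasa} and Corollary \ref{mqqmqqsaospapsa}, invoke Riesz representation and the injectivity of $I$, and read off the BSDE for $((\bar{\lambda}_{\I})_{1},(\bar{\lambda}_{\I})_{2})$ together with $(\bar{\lambda}_{\I})_{0}=(\bar{\lambda}_{\I})_{1}(0)$; your localization step $v=\bar{u}+\phi(u'-\bar{u})$ replaces the paper's citation of the argument in \cite[Theorem 1.5]{CanKara95} and is the standard way to pass from the integrated to the pointwise variational inequality. The only genuine difference is in the forward direction: the paper reuses the same representation formula, writing $D_{x}\L(\cdot)z=\langle p(0)+\int_{0}^{\cdot}p\,\dd t+\int_{0}^{\cdot}q\,\dd W-\bar{\lambda}_{\I},z\rangle_{\I}$ with $(p,q)$ the solution of the BSDE \eqref{mmmeeessssaaaaa}, and then concludes $p=\bar{p}$, $q=\bar{q}$ by a BSDE uniqueness comparison ($p(T)-\bar{p}(T)=0$ and $\dd[p-\bar{p}]=[q-\bar{q}]\dd W$), whereas you perform a single integration by parts (Lemma \ref{usoito}) pairing $z$ directly with the given adjoint pair $(\bar{p},\bar{q})$, so that the $z$-terms cancel against $H_{x}[\bar{\alpha}]$ and the transversality condition without any uniqueness step. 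Both routes rest on the same duality identity (Lemma \ref{usoito} is what underlies Proposition \ref{msmamsmasaaaaasasasasa}); yours is marginally more direct for the forward implication, while the paper's has the advantage of a single formula \eqref{mwmrnnnrnrnnsss} serving both implications. No gaps.
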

\begin{remark} If $\bar{\alpha}=1$ we can replace in the statement of the theorem ``generalized weak-Pontryagin multiplier'' by ``weak-Pontryagin multiplier''  and ``generalized Lagrange multiplier'' by ``Lagrange multiplier''.
\end{remark}
\begin{proof} For notational convenience we set 
$\ell_{x}(t):=\ell_{x}(t,\bar{x}(t),\bar{u}(t))$, $\sigma_x(t):=\sigma_{x}(t,\bar{x}(t),\bar{u}(t))$ with analogous definitions for $f_u(t)$ and $\sigma_u(t)$. Let $( \bar{\alpha}, \bar{p}, \bar{q}, \bar{\lambda}_{E}, \bar{\lambda}_{I})$ be a generalized weak-Pontryagin multiplier at $(\bar{x},\bar{u})$. In order to prove that $(\bar{\alpha}, \bar{\lambda}_{\I}, \bar{\lambda}_{E}, \bar{\lambda}_{I})$, with $\bar{\lambda}_{\I}$ given by \eqref{mmmdhhhadhhahahbrbr}, is a generalized Lagrange multiplier at $(\bar{x},\bar{u})$ it suffices to show that  the first two relations in \eqref{mmdammiirnr} hold true.  For the first one, for every $z\in \ito$, Lemma \ref{wrnwnrnwnrnqbqbqbq} and Lemma \ref{qwemqemqemaaaaa} imply that \small
\be\label{madnandadnadaaaaaaaaa}\ba{rcl} \bar{\alpha} D_{x}F(x,u)z&=&  \EE\left(\int_{0}^{T} \bar{\alpha} \ell_{x}(t)z(t) \dd t+  \bar{\alpha}\Phi_{x}(\bar{x}(T)) z(T)\right), \\[6pt]
									   		 \langle \bar{\lambda}_{E}, D_{x}G_{E}(\bar{x}) z \rangle&=&  \EE\left(  \bar{\lambda}_{E}^{\top} (\Phi_{E})_{x} (\bar{x}(T))   z(T)  \right),\\[6pt]
											 \langle \bar{\lambda}_{I}, D_{x}G_{I}(\bar{x})z \rangle&=& \EE\left(  \bar{\lambda}_{I}^{\top} (\Phi_{I})_{x} (\bar{x}(T))   z(T)  \right),\\[4pt]
											 \langle \bar{\lambda}_{\I}, D_{x}G(\bar{x})z \rangle_{\I}&=& \EE\left(\int_{0}^{T} \left[   \bar{\lambda}_{1}(t)^{\top}f_{x} (t) + \sum_{j=1}^{d}\bar{\lambda}_{2}^{j}(t)^{\top}  \sigma_{x}^{j}(t)\right] z(t)\dd t \right)- \langle \bar{\lambda}_{\I},z \rangle_{\I}.
						\ea\ee\normalsize
Using Proposition \ref{msmamsmasaaaaasasasasa}, with $a= \bar{\alpha} \ell_{x}(t)$  and $g^{\top}=	\bar{\alpha}\Phi_{x}(\bar{x}(T))+\bar{\lambda}_{E}^{\top} (\Phi_{E})_{x} (\bar{x}(T))+		 \bar{\lambda}_{I}^{\top} (\Phi_{I})_{x} (\bar{x}(T)),$ we get, recalling \eqref{damdamdmaaaa}, \small
$$\ba{rcl} D_{x}  \L(x,u,\alpha,\lambda_{\I} ,\lambda_{E}, \lambda_{I})z&=& \left\langle \hat{p}(0)+\int_{0}^{\cdot} \hat{p}(t) \dd t +\int_{0}^{\cdot} \hat{q}(t) \dd W(t)- \bar{\lambda}_{\I}, z\right \rangle_{\I} +\left\langle \lambda_{\I}, (A_{f_{x}} + B_{\sigma_{x}}) z \right\rangle_{\I},\\[5pt]
					\; &=& \left\langle \hat{p}(0)+\int_{0}^{\cdot} \hat{p}(t) \dd t +\int_{0}^{\cdot} \hat{q}(t) \dd W(t)+ (A_{f_{x}}+B_{\sigma_{x}})^{*} \bar{\lambda}_{\I}- \bar{\lambda}_{\I}, z\right \rangle_{\I},\ea
$$\normalsize
where $(\hat{p}, \hat{q}) \in \ito \times (L^{2,2}_{\FF})^{n \times d}$ is the unique solution of 
$$\ba{rcl} \dd \hat{p}(t)&=&  -\bar{\alpha} \ell_{x}(t)^{\top} \dd t + \hat{q}(t) \dd W(t),   \\[4pt]
		       \hat{p}(T) &=& \bar{\alpha}\Phi_{x}(\bar{x}(T))^{\top} +  (\Phi_{E})_{x}(\bar{x}(T))^{\top}\bar{\lambda}_{E}+   (\Phi_{I})_{x}(\bar{x}(T))^{\top}\bar{\lambda}_{I}.
\ea$$
By Corollary \ref{mqqmqqsaospapsa} we get that  
\be\label{mwmrnnnrnrnnsss} D_{x}  \L(\bar{x},\bar{u},\alpha,\lambda_{\I} ,\lambda_{E}, \lambda_{I})z= \left\langle p(0)+\int_{0}^{\cdot} p(t) \dd t +\int_{0}^{\cdot} q(t) \dd W(t)- \bar{\lambda}_{\I}, z\right \rangle_{\I},\ee
 where $(p, q) \in \ito \times (L^{2,2}_{\FF})^{n \times d}$ is the unique solution of 
\be\label{mmmeeessssaaaaa}\ba{rcl} \dd p(t)&=&  -\left[\bar{\alpha} \ell_{x}(t)^{\top }+f_{x} (t)^{\top}(\bar{\lambda}_{\I})_{1}(t) +   \sigma_{x}(t)^{\top}(\bar{\lambda}_{\I})_{2}(t)\right] \dd t + q(t) \dd W(t),   \\[4pt]
		       p(T) &=&  \bar{\alpha}\Phi_{x}(\bar{x}(T))^{ \top }+  (\Phi_{E})_{x}(\bar{x}(T))^{\top}\bar{\lambda}_{E}+   (\Phi_{I})_{x}(\bar{x}(T))^{\top}\bar{\lambda}_{I}.
\ea\ee
Since $((\bar{\lambda}_{\I})_{1}, (\bar{\lambda}_{\I})_{2})=(\bar{p}, \bar{q})$, by  \eqref{annqenbrbbrbsbsbsbsaaa} we get that $p(T)-\bar{p}(T)=0$ and $\dd [p-\bar{p}](t)= [q(t)-\bar{q}(t)] \dd W(t)$ which yields to   $p=\bar{p}$, $q=\bar{q}$ and in particular $p(0)=\bar{p}(0)$, hence the first relation in  \eqref{mmdammiirnr}  follows from   \eqref{mwmrnnnrnrnnsss}. In order to prove the second relation in  \eqref{mmdammiirnr}  it suffices to note that for all $v\in \U$  \small
\be\label{mmmdmmdmdssaaaa}D_{u}  \L(\bar{x},\bar{u},\bar{\alpha},\bar{\lambda}_{\I} ,\bar{\lambda}_{E}, \bar{\lambda}_{I})(v-\bar{u})= \EE \left( \int_{0}^{T}H_{u}[\bar{\alpha}](\omega,t,\bar{x}(t), \bar{u}, \bar{p}(t), \bar{q}(t))(v(t)-\bar{u}(t)) \dd t\right)\geq 0.\ee \normalsize
Now, let $(\bar{\alpha}, \bar{\lambda}_{\I}, \bar{\lambda}_{E}, \bar{\lambda}_{I})$ be a generalized Lagrange multiplier at $(\bar{x},\bar{u})$. By the first relation in \eqref{mmdammiirnr} and \eqref{mwmrnnnrnrnnsss} we obtain that 
$$ \bar{\lambda}_{\I}=p(0)+\int_{0}^{\cdot} p(t) \dd t +\int_{0}^{\cdot} q(t) \dd W(t),$$
where $(p, q) \in \I^{n} \times (L^{2,2}_{\FF})^{n \times d}$ solves \eqref{mmmeeessssaaaaa}. Therefore, we get $ (\bar{\lambda}_{\I})_{1}=p$ and $ (\bar{\lambda}_{\I})_{2}=q$ and  $ (\bar{\lambda}_{\I})_{0}=p(0)=(\bar{\lambda}_{\I})_{1}(0)$.  Thus  \eqref{mmmeeessssaaaaa} implies that $((\bar{\lambda}_{\I})_{1},(\bar{\lambda}_{\I})_{2})$ satisfies the first and second relations in \eqref{annqenbrbbrbsbsbsbsaaa}. Finally, by  the second relation in  \eqref{mmdammiirnr} and expression \eqref{mmmdmmdmdssaaaa}, we obtain  the third relation in \eqref{annqenbrbbrbsbsbsbsaaa} following the same argument that in the proof of \cite[Theorem 1.5]{CanKara95}.
\end{proof}

As a consequence of the above result we obtain the following sufficient condition, under convexity assumptions. The proof is standard, but since it is very short we provide it for the reader's convenience. 

\begin{corollary}[Sufficient condition for convex problems]\label{memmndnaaaaasssasasasasasasasa}
\label{sufPMP}
Suppose that  $F$ and $G_{I}$ are convex and that $G$ and $G_{E}$ are affine. \smallskip\\
{\rm(i)} Let $(\bar{x},\bar{u}) \in  F(\mathcal{SP})$  and  suppose that  $(\bar{p}, \bar{q}, \bar{\lambda}_{E}, \bar{\lambda}_{I}) \in \ito \times (L^{2,2}_{\FF})^{n\times d} \times \RR^{n_{E}} \times \RR^{n_{I}}$ is a weak-Pontryagin multiplier at $(\bar{x},\bar{u})$.    Then, the pair $(\bar{x}, \bar{u})$ solves $(\mathcal{SP})$.\smallskip\\
{\rm(ii)}  The set of weak-Pontryagin multipliers is independent of the solutions of $(\mathcal{SP})$. More precisely, let $(\bar{x}^{1}, \bar{u}^{1})$, $(\bar{x}^{2}, \bar{u}^{2})  \in   F(\mathcal{SP})$  be two solutions of $(\mathcal{SP})$. Then, $(\bar{p}, \bar{q}, \bar{\lambda}_{E}, \bar{\lambda}_{I})$  is a weak-Pontryagin multiplier at $(\bar{x}^{1},\bar{u}^{1})$ if and only if   it is a weak-Pontryagin multiplier at $(\bar{x}^{2},\bar{u}^{2})$.
\end{corollary}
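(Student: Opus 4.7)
To prove (i) the plan is to use Theorem \ref{Teoidentificacion} to translate the given weak-Pontryagin multiplier $(\bar p,\bar q,\bar\lambda_E,\bar\lambda_I)$ into a Lagrange multiplier $(\bar\lambda_{\I},\bar\lambda_E,\bar\lambda_I)$ at $(\bar x,\bar u)$, where $\bar\lambda_{\I}$ is defined by \eqref{mmmdhhhadhhahahbrbr}. Under the hypotheses ($F$ convex, $G_I$ convex, $G$ and $G_E$ affine, $\bar\lambda_I\ge 0$) the map $(x,u)\mapsto\L(x,u,\bar\lambda_{\I},\bar\lambda_E,\bar\lambda_I)$ is convex on $\ito\times\control$: the term $\langle\bar\lambda_{\I},G(x,u)+x_0\rangle_{\I}$ is affine, $\bar\lambda_E^{\top}G_E$ is affine, and $\bar\lambda_I^{\top}G_I$ is convex as a nonnegative combination of convex functions. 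Since $\U$ is convex, the Lagrange conditions \eqref{mmdammiirnr}---stationarity in $x$ on all of $\ito$ plus the variational inequality in $u$ on $\U$---are the standard first-order sufficient conditions for $(\bar x,\bar u)$ to be a global minimizer of $\L(\cdot,\cdot,\bar\lambda_{\I},\bar\lambda_E,\bar\lambda_I)$ over $\ito\times\U$.

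Now for any feasible $(x,u)\in F(\mathcal{SP})$ we use $G(x,u)+x_0=0$, $G_E(x)=0$, $G_I(x)\le 0$, $\bar\lambda_I\ge 0$, and the complementary slackness $\bar\lambda_I^{\top}G_I(\bar x)=0$ from \eqref{mmdammiirnr} to write
\begin{equation*}
F(x,u)\ge F(x,u)+\bar\lambda_I^{\top}G_I(x)=\L(x,u,\bar\lambda_{\I},\bar\lambda_E,\bar\lambda_I)\ge \L(\bar x,\bar u,\bar\lambda_{\I},\bar\lambda_E,\bar\lambda_I)=F(\bar x,\bar u),
\end{equation*}
which proves that $(\bar x,\bar u)$ solves $(\mathcal{SP})$.

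For (ii), let $(\bar p,\bar q,\bar\lambda_E,\bar\lambda_I)$ be a weak-Pontryagin multiplier at $(\bar x^{1},\bar u^{1})$, define $\bar\lambda_{\I}$ by \eqref{mmmdhhhadhhahahbrbr}, and let $(\bar x^{2},\bar u^{2})$ be any other solution. Applying the chain of inequalities from paragraph two to $(x,u)=(\bar x^{2},\bar u^{2})$ and using $F(\bar x^{2},\bar u^{2})=F(\bar x^{1},\bar u^{1})=\L(\bar x^{1},\bar u^{1},\bar\lambda_{\I},\bar\lambda_E,\bar\lambda_I)$ forces every inequality to be an equality. From the first equality I read off $\bar\lambda_I^{\top}G_I(\bar x^{2}(T))=0$, i.e.\ the complementary slackness at $(\bar x^{2},\bar u^{2})$; from the second I deduce that $(\bar x^{2},\bar u^{2})$ is itself a minimizer of the convex functional $\L(\cdot,\cdot,\bar\lambda_{\I},\bar\lambda_E,\bar\lambda_I)$ on $\ito\times\U$, hence satisfies the two first-order Lagrange conditions in \eqref{mmdammiirnr}. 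The remaining sign/nontriviality conditions are inherited since the multiplier is unchanged. Theorem \ref{Teoidentificacion} then translates these Lagrange conditions back into the weak-Pontryagin conditions \eqref{annqenbrbbrbsbsbsbsaaa} at $(\bar x^{2},\bar u^{2})$, with the same $(\bar p,\bar q,\bar\lambda_E,\bar\lambda_I)$ since $((\bar\lambda_{\I})_{1},(\bar\lambda_{\I})_{2})=(\bar p,\bar q)$.

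The only delicate point is keeping track of which conditions depend on the base point; the argument is pure convex analysis once the Lagrangian identification of Theorem \ref{Teoidentificacion} is available, so I do not anticipate a genuine obstacle.
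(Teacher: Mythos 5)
Your proof is correct and follows essentially the same route as the paper: part (i) is exactly the paper's argument (translate the weak-Pontryagin multiplier into a Lagrange multiplier via Theorem \ref{Teoidentificacion}, then combine convexity of the Lagrangian with feasibility, the sign of $\bar{\lambda}_{I}$ and complementary slackness). For part (ii) the paper simply invokes the known fact that for convex problems the set of Lagrange multipliers is independent of the chosen solution (citing Bonnans--Shapiro), whereas your equality-in-the-chain argument is precisely the standard proof of that fact, so you have only made the citation self-contained.
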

\begin{proof}  By Theorem  \ref{Teoidentificacion}, $\bar{\lambda}_{\I} \in \I^{n}$ defined by \eqref{mmmdhhhadhhahahbrbr} is a such that $(\bar{\lambda}_{\I}, \bar{\lambda}_{E}, \bar{\lambda}_{I})$ is a Lagrange multiplier. Now, let   $(x,u)$ be feasible for $(\mathcal{SP})$, then  by the convexity of  $\L(\cdot,\cdot,1, \bar{\lambda}_{\I}, \bar{\lambda}_{E}, \bar{\lambda}_{I})$, 
$$\ba{ll}F(x,u) \geq \L(x,u, 1, \bar{\lambda}_{\I}, \bar{\lambda}_{E}, \bar{\lambda}_{I}) \geq&  \L(\bar{x},\bar{u}, 1, \bar{\lambda}_{\I}, \bar{\lambda}_{E}, \bar{\lambda}_{I})+ D_{x}\L(\bar{x},\bar{u}, 1, \bar{\lambda}_{\I}, \bar{\lambda}_{E}, \bar{\lambda}_{I})(x-\bar{x}) \\[4pt]
\; & + D_{u}\L(\bar{x},\bar{u}, 1, \bar{\lambda}_{\I}, \bar{\lambda}_{E}, \bar{\lambda}_{I})(u-\bar{u}).\ea$$
Since $\L(\bar{x},\bar{u}, 1, \bar{\lambda}_{\I}, \bar{\lambda}_{E}, \bar{\lambda}_{I})= F(\bar{x},\bar{u})$ assertion {\rm(i)} follows from \eqref{mmdammiirnr}. Assertion {\rm(ii)} is a direct consequence of  Theorem \ref{Teoidentificacion} and the fact that for convex problems the set of Lagrange multipliers $\Lambda_{L}(\bar{x}, \bar{u})$ does not depend on $(\bar{x}, \bar{u})$ (see e.g. \cite[Theorem 3.6]{BonSha}). \end{proof}
\section{Some sensitivity results}\label{sens}
In this section we take advantage of the Lagrange multiplier interpretation of the adjoint state $(p,q)$ in order to obtain some sensitivity results for the optimal cost when the problem dynamics and final constraints are perturbed.  We will first consider general convex problems and linear perturbations of the dynamics. Next, we study in detail the case of Linear Quadratic (LQ) stochastic problems and the mean variance portfolio selection problem, where the perturbations are performed also in the matrices multiplying  the state and control variables. We shall study these problems separately, since although they belong to a same family, their specific structures mean that we need to employ slightly different arguments and  assume different hypotheses. In any case 
a stability result for the solutions of the parameterized problems is needed and  will be a consequence of the following result:\smallskip
\begin{proposition}\label{qmmqemqmndndndndaaa} The following assertions hold: \smallskip\\
{\rm(i)} Let $x^{k} \in \ito$ be a sequence converging weakly to $x\in \ito$. Then $x^k$ converges weakly to $x$ in $(L^{2,2}_{\FF})^{n}$ and for all $t\in [0, T]$ we have that  $x^{k}(t)$ converges weakly to $x(t)$ in $(L^{2}_{\F_{t}})^{n}$.\smallskip\\
{\rm(ii)} Let $x_0^k\in \RR^{n}$, $A^{k} \in (L^{\infty,\infty}_{\FF})^{n\times n}$, $(C^{j})^{k} \in (L^{\infty,\infty}_{\FF})^{n\times n}$,  $\xi_{1}^{k} \in (L^{2,2}_{\FF})^{n}$, $(\xi_{2}^{j})^{k} \in (L^{2,2}_{\FF})^{n}$ {\rm(}$j=1,\hdots, d${\rm)}. Suppose that $(x_{0}^{k}, A^{k}, (C^{j})^{k})$ converges strongly to $(x_{0}, A, C^{j})$ and that  $(\xi_{1}^{k}, (\xi_{2}^{j})^{k})$ converges weakly to $(\xi_{1}, \xi_{2}^{j})$. Then, the solutions $x^{k}$ of 
$$\ba{rcl} \dd x^{k}(t) &=& \left[ A^{k}(t) x^{k}(t) + \xi_{1}^{k}(t)\right]\dd t+ \sum_{j=1}^{d} \left[ (C^{j})^{k}(t) x^{k}(t)+ (\xi_{2}^{j})^{k}(t)\right]\dd W^{j}(t), \\[4pt]
			x^{k}(0)&=& x_{0}^{k},\ea$$
converge weakly in $\I^n$ to the solution $x$ of 			
\be\label{mmnnndbbbfbfbbfvvvvfvss}\ba{rcl} \dd x(t) &=& \left[ A(t) x(t) + \xi_{1}(t)\right]\dd t+ \sum_{j=1}^{d} \left[ C^{j}(t) x(t)+ \xi_{2}^{j}(t)\right]\dd W^{j}(t), \\[4pt]
			x(0)&=& x_{0}.\ea\ee
{\rm(iii)} Let $D^{k} \in  (L^{\infty,\infty}_{\FF})^{n\times n}$, $(E^{j})^{k}    \in (L^{\infty,\infty}_{\FF})^{n\times n}$  {\rm(}$j=1, \hdots, d${\rm)},    $\xi_{3}^{k}\in (L^{2,2}_{\FF})^{n}$ and  $\xi_{4}^{k} \in(L^{2}_{\F_{T}})^{n}$.  Suppose that $(D^{k},(E^{j})^{k})$ converges strongly to $(D,E^{j})$ and $(\xi_{3}^{k},\xi_{4}^{k})$ converge weakly to $(\xi_{3},\xi_{4})$. Then,     the solution $(p^k,q^k)$ of 
\be\label{memqmemqmdnndndndnss}
\ba{rcl}
\dd p^{k}(t) &=& \left[ D^{k}(t) p^{k}(t) + \sum_{j=1}^{d}(E^{j})^{k}(t) (q^{j})^{k}(t) + \xi_{3}^{k}(t)\right] \dd t + q^{k}(t)\dd W(t), \\[4pt]
	p^{k}(T)&=& \xi_{4}^{k}.
\ea
\ee
converges weakly in $\I^n\times (L^{2,2}_{\FF})^{n\times d}$ to the solution $(p,q)$ of 
\be\label{memqmemqmdqeeenndndndnss}
\ba{rcl}
\dd p(t) &=& \left[ D(t) p(t) + \sum_{j=1}^{d}E^{j}(t) q^{j}(t) + \xi_{3}(t)\right] \dd t + q(t)\dd W(t), \\[4pt]
	p(T)&=& \xi_{4}.
\ea
\ee
\end{proposition}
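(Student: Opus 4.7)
The plan is to handle all three parts by a common scheme: rephrase the claim as convergence of some continuous linear functional on a Hilbert space, obtain a uniform a priori bound so as to extract a weakly convergent subsequence, and finally use uniqueness of the limit (B)SDE to upgrade subsequential to full convergence.

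For (i), recall that by Lemma \ref{meqwnenenqneqbebqbs} the space $\ito$ embeds continuously into $(L^{2,\infty}_{\FF})^{n}$, so that $\|z\|_{2,2}\le \sqrt{T}\,\|z\|_{2,\infty}\le C\|z\|_{\I}$ and $\|z(t)\|_{L^{2}(\F_t)}\le \|z\|_{2,\infty}\le c\|z\|_{\I}$ for every $t\in[0,T]$. Consequently, for fixed $\phi\in(L^{2,2}_{\FF})^n$ the map $z\mapsto \EE\int_0^T \phi(t)^\top z(t)\,dt$ is a continuous linear functional on $\ito$, and for fixed $t$ and $\phi\in L^2(\F_t)^n$ the map $z\mapsto \EE[\phi^{\top}z(t)]$ is likewise continuous and linear (an explicit $\I$-inner-product representation of both functionals is furnished by Proposition \ref{msmamsmasaaaaasasasasa}). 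The two claims of (i) are then exactly the convergence of $x^k$ under these functionals, which follows immediately from weak convergence in $\ito$.

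For (ii), I first bound the sequence in $\ito$. Strong convergence of $A^k,C^{j,k}$ in $(L^{\infty,\infty}_{\FF})^{n\times n}$ gives a uniform control on the $\|\cdot\|_{\infty,\infty}$-norms, while the weakly convergent data $(\xi_1^k,\xi_2^{j,k})$ are bounded in $(L^{2,2}_{\FF})$-norm. The standard Gronwall-type estimate for linear SDEs then yields $\sup_k\|x^k\|_{\I}<\infty$, so I can extract a subsequence $x^{k_\ell}\rightharpoonup\tilde x$ in $\ito$; by (i) the convergence then also holds weakly in $(L^{2,2}_\FF)^n$. The heart of the argument is to pass to the weak limit in the Itô decomposition, whose three components are $x_0^{k_\ell}$, $A^{k_\ell}x^{k_\ell}+\xi_1^{k_\ell}$, and $(C^{j,k_\ell}x^{k_\ell}+\xi_2^{j,k_\ell})_j$. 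The nontrivial step is $A^{k_\ell}x^{k_\ell}\rightharpoonup A\tilde x$ (and the analogue for $C^{j,k_\ell}$): write $A^{k_\ell}x^{k_\ell}-A\tilde x=(A^{k_\ell}-A)x^{k_\ell}+A(x^{k_\ell}-\tilde x)$, where the first term goes to zero in $(L^{2,2}_\FF)^n$-norm by $\|(A^{k_\ell}-A)x^{k_\ell}\|_{2,2}\le \|A^{k_\ell}-A\|_{\infty,\infty}\|x^{k_\ell}\|_{2,2}$, and the second tends to zero weakly in $(L^{2,2}_\FF)^n$ via testing against $A^\top\phi\in(L^{2,2}_\FF)^n$. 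Thus the weak limit of $I^{-1}(x^{k_\ell})$ equals $(x_0,A\tilde x+\xi_1,(C^j\tilde x+\xi_2^j)_j)$, i.e. $\tilde x$ solves \eqref{mmnnndbbbfbfbbfvvvvfvss}. Uniqueness forces $\tilde x=x$, and since every weak subsequential limit equals $x$, the whole sequence converges weakly.

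For (iii) the strategy is identical. A standard a priori estimate for linear BSDEs with bounded coefficients provides $\|p^k\|_{\I}+\|q^k\|_{2,2}\le C(\|\xi_3^k\|_{2,2}+\|\xi_4^k\|_{L^2(\F_T)})$, so $(p^k,q^k)$ is bounded in $\ito\times(L^{2,2}_\FF)^{n\times d}$ and admits a weakly convergent subsequence $(p^{k_\ell},q^{k_\ell})\rightharpoonup(\tilde p,\tilde q)$. By (i), also $p^{k_\ell}\rightharpoonup\tilde p$ in $(L^{2,2}_\FF)^n$, and the mixed strong-weak argument above shows $D^{k_\ell}p^{k_\ell}\rightharpoonup D\tilde p$ and $E^{j,k_\ell}q^{j,k_\ell}\rightharpoonup E^j\tilde q^j$ in $(L^{2,2}_\FF)^n$. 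Hence the Itô drift of $p^{k_\ell}$ converges weakly to $D\tilde p+\sum_j E^j\tilde q^j+\xi_3$ and the diffusion to $\tilde q$. For the terminal condition, (i) gives $p^{k_\ell}(T)\rightharpoonup\tilde p(T)$ in $L^2(\F_T)$, while by hypothesis $p^{k_\ell}(T)=\xi_4^{k_\ell}\rightharpoonup\xi_4$, so $\tilde p(T)=\xi_4$; the limit $(\tilde p,\tilde q)$ then solves \eqref{memqmemqmdqeeenndndndnss} and uniqueness identifies it with $(p,q)$. The main technical obstacle throughout is the passage to the limit in the mixed products $A^k x^k$ and $E^{j,k}q^{j,k}$: only the uniform operator-norm strong convergence of the coefficients allows one to kill the errors $(A^k-A)x^k$ and $(E^{j,k}-E^j)q^{j,k}$ uniformly in $k$.
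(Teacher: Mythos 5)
Your proof is correct and follows essentially the same route as the paper: uniform a priori bounds from the classical linear SDE/BSDE estimates, weak subsequential extraction, passage to the limit in the It\^o components via the strong$\times$weak product argument, identification of the limit through part (i) and uniqueness of the limit equation, and part (i) itself from the continuous embedding of $\I^{n}$ into $(L^{2,2}_{\FF})^{n}$ and $(L^{2}_{\F_t})^{n}$. The only difference is cosmetic: you extract the subsequence weakly in $\I^{n}$ directly and spell out the splitting $(A^{k}-A)x^{k}+A(x^{k}-\tilde x)$, which the paper leaves implicit after extracting in $(L^{2,2}_{\FF})^{n}$.
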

\begin{proof}  Assertion {\rm(i)} follows directly from Lemma \ref{meqwnenenqneqbebqbs} and the fact that  $(L^{2,\infty}_{\FF})^{n}$ is continuously embedded in  $(L^{2,2}_{\FF})^{n}$ and $(L^{2}_{\F_{t}})^{n}$, for all $t\in [0,T]$.
Let us prove assertion {\rm(ii)}. Since $|x_{0}^{k}|$, $\|A^{k}\|_{\infty, \infty}$, $\|(C^{j})^{k}\|_{\infty, \infty}$, $\|(D^{j})^{k}\|_{\infty, \infty}$, $\|\xi_{1}^{k}\|_{2,2}$ and $\|(\xi_{2}^{j})^{k}\|_{2,2}$ are bounded,  by the classical proof for the stability of linear SDEs (see e.g. \cite[Chapter 6, Section 4]{YongZhou}), we have that $\| x^{k}\|_{2,\infty}$ is uniformly bounded in $k$. Therefore for any subsequence there exists $\hat{x} \in (L^{2,2}_{\FF})^{n}$ such that for a further subsequence $x^{k}$ converges weakly in $(L^{2,2}_{\FF})^{n}$ to $\hat{x}$. Using that $A^{k} x^{k}$, $(C^{j})^{k}x^{k}$ converge weakly in $(L^{2,2}_{\FF})^{n}$ to $A \hat{x}$, $C^{j}\hat{x}$, respectively, we see that $x^{k}$ converges weakly in $\I^{n}$ to 
$$ \tilde{x}(\cdot):= x_{0} + \int_{0}^{\cdot} \left[ A(t) \hat{x}(t) + \xi_{1}(t)\right]\dd t+ \sum_{j=1}^{d} \int_{0}^{\cdot} \left[ C^{j}(t) \hat{x}(t)+ \xi_{2}^{j}(t)\right]\dd W(t).$$
By {\rm(i)} we have that $ \tilde{x}= \hat{x}$ and since \eqref{mmnnndbbbfbfbbfvvvvfvss} has a unique   solution (and so independent of the given subsequence) the result follows. In order to prove {\rm(iii)}, we argue in a similar manner. Note that since $(\xi_{3}^{k}, \xi_{4}^{k})$ is bounded in $(L^{2,2}_{\FF})^{n}\times (L^{2}_{\F_{T}})^{n}$ and  $\|D^{k}\|_{\infty, \infty}$, $\|(E^{j})^{k}\|_{\infty, \infty}$ are bounded, following the lines of the proof \cite[ Chapter 7, Theorem 2.2]{YongZhou})  we obtain that $\| p^{k}\|_{2,\infty}+ \sum_{j=1}^{d} \| (q^{j})^{k}\|_{2,2}$ is uniformly bounded in $k$.   So for any subsequence there exists $(\hat{p},\hat{q}) \in (L^{2,2}_{\FF})^{n}\times  (L^{2,2}_{\FF})^{n\times d}$ such that, except for some further subsequence, $(p^{k}, q^{k})$ converge to $(\hat{p}, \hat{q})$ weakly in $(L^{2,2}_{\FF})^{n} \times (L^{2,2}_{\FF})^{n\times d}$.   Since $D^{k} p^{k}$ and $(E^{j})^{k} (q^{j})^{k}$ converge weakly in $(L^{2,2}_{\FF})^{n}$ respectively to $D \hat{p}$, $E^{j} \hat{q}^{j}$, we easily obtain that $p^{k}$ converges weakly in $\I^{n}$ to
\be\label{mmdandnanbbbbbdasad} \tilde{p}(\cdot):=\tilde{p}(0)+ \int_{0}^{\cdot} \left[ D(t) \hat{p}(t) + \sum_{j=1}^{d}E^{j}(t) \hat{q}^{j}(t) + \xi_{3}(t)\right] \dd t + \int_{0}^{\cdot} \hat{q}(t) \dd W(t),\ee
where 
$$ \tilde{p}(0):= \EE\left( \xi_4 -\int_{0}^{T} \left[ D(t) \hat{p}(t) + \sum_{j=1}^{d}E^{j}(t) \hat{q}^{j}(t) + \xi_{3}(t)\right] \dd t \right).$$
By  {\rm(i)} we obtain that $\tilde{p}=\hat{p}$, and $\hat{p}(T)=\xi_{4}$ using that $p^{k}(T)=\xi_{4}^{k}$ converges  weakly in $(L^{2}_{\F_{T}})^{n}$ to $\xi_{4}$. From this fact and \eqref{mmdandnanbbbbbdasad}, we have that $(\hat{p}, \hat{q})$ solves \eqref{memqmemqmdqeeenndndndnss}. Finally, since the solution of \eqref{memqmemqmdqeeenndndndnss} is unique, the result follows.
\end{proof}

\subsection{Convex problems and  linear  perturbations of the dynamics}\label{sens1}
Let us define the perturbation space $\P_{1}:= \RR^{n} \times (L^{2,2}_{\FF})^{n}  \times (L^{2,2}_{\FF})^{n \times d}$ and let $P:=( x_0,  \hat{f},   \hat{\sigma}) \in \P_{1}$.    We  consider the problem  \small
$$\ba{l}   \ds \inf_{(x,u) \in \ito \times (L^{2,2}_{\FF})^{m}} \EE\left( \int_{0}^{T} \ell(t,\omega,x(t), u(t)) \dd t + \Phi(\omega,x(T))\right)\\[4pt]
\mbox{s.t. } \hspace{0.4cm}\left\{\ba{rcl} \dd x (t)&=&  [f(t,\omega, x(t),u(t))+\hat{f}(t,\omega)]  \dd t +[\sigma(t,\omega, x(t),u(t))+ \hat{\sigma}(t,\omega)] \dd W(t),  \\[4pt]
								    x(0)& =& x_0, \\[4pt]
								     u  & \in & \U. \ea\right.
\ea \eqno(P_{1,P})$$ \normalsize
\normalsize
We suppose that  $(\ell,\Phi, f, \sigma)$ satisfy assumptions  \textbf{(H1)}-\textbf{(H2)} in Section \ref{mwmnnsndnadawwwapapapapas} and    $\U$ is given by \eqref{qmemqmenandadds}. In addition, we will need the following convexity assumption:\smallskip\\
\textbf{(H3)}  For almost all $(t,\omega) \in [0,T] \times \Omega$ (respectively $\omega \in \Omega$),  the function $\ell(t,\omega, \cdot, \cdot)$ (respectively $\Phi(\omega, \cdot)$) is    convex. Moreover,  we assume that a.s. in $[0,T] \times \Omega$ the functions
$f(t,\omega, \cdot, \cdot)$ and $\sigma(t,\omega, \cdot, \cdot)$ are affine. \smallskip 

We define {\it the value function} $v:  \P_{1} \to \RR \cup \{-\infty\}$ for the function that associates to $P$ the optimal cost for problem $(P_{1,P})$.  Note that under  \textbf{(H1)}-\textbf{(H2)} the feasible set for $(P_{1,P})$ is not empty, and therefore $v$ is well defined.
The following result is a consequence  of a Theorem due to R.T. Rockafellar  (see \cite{Rockafellarbook1974}), the Lagrange multiplier interpretation of   weak-Pontryagin multipliers in Theorem \ref{Teoidentificacion} and classical results in infinite dimensional optimization (see e.g. \cite{maurerzowe79, MR0406529,ZoweKurc79}). 
\begin{theorem}\label{sensibilidad1} Assume  \textbf{(H1)}-\textbf{(H3)} and  that for $P \in \P_{1}$  problem  $(P_{1,P})$ admits at least one solution. Then, there  exists   $(\bar{p}, \bar{q}) \in \ito \times  (L^{2,2}_{\FF})^{n \times d}$  such that  for every solution $(\bar{x},\bar{u})$ of $(P_{1,P})$, the pair $(\bar{p}, \bar{q})$ is the unique weak-Pontryagin multiplier associated to $(\bar{x}, \bar{u})$. Moreover,    the value function $v$ is continuous at $P$, Hadamard and Gâteaux directionally differentiable  at $P$ and its directional derivative $Dv(P; \cdot): \P_{1} \to \RR$  is given by  
\be\label{mrwmmrnnsnsnsssaaaaasss} Dv(P; \Delta P)=   \bar{p}(0)^{\top} \Delta x_0 + \EE\left( \int_{0}^{T} \bar{p}(t)^{\top}  \Delta f(t) \dd t\right) + \EE\left( \int_{0}^{T} \mbox{{\rm tr}}\left[\bar{q}(t)^{\top}  \Delta \sigma (t) \right] \dd t\right),\ee \normalsize
for all  $\Delta P =(\Delta x_0, \Delta f, \Delta \sigma) \in \P_1$.
\end{theorem}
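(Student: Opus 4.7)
The plan is to view problem $(P_{1,P})$ as an abstract convex program in Hilbert space and combine the Lagrange/weak-Pontryagin identification of Theorem \ref{Teoidentificacion} with Rockafellar-type sensitivity results for convex programs (see \cite{Rockafellarbook1974} and also \cite{maurerzowe79, MR0406529, ZoweKurc79}).

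The first step is to recast the perturbed dynamics so that $P$ enters only on the right-hand side of an affine equality constraint: writing $I$ for the It\^o identification map of \eqref{amdmmrnnrnssasa}, the SDE in $(P_{1,P})$ is equivalent to $G(x,u) + I(P) = 0$ in $\I^n$, with $G$ as in \eqref{meqmemnnrnnrnr} (now independent of $P$). Under \textbf{(H3)}, $F$ is convex and $G$ is continuous and affine (Remark \ref{admamdmamnnnssssaaaa}), so $(P_{1,P})$ is a convex program in $\I^n \times \control$ with the closed convex pointwise constraint $u \in \U$. I would then obtain a Lagrange multiplier $\bar{\lambda}_{\I}\in\I^n$ associated to the equality constraint: Robinson's constraint qualification holds automatically since Lemma \ref{wrnwnrnwnrnqbqbqbq} asserts that $DG(\bar{x},\bar{u})(\cdot,0): \I^n \to \I^n$ is bijective. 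The same surjectivity forces $\bar{\lambda}_{\I}$ to be unique, and Corollary \ref{memmndnaaaaasssasasasasasasasa}(ii) guarantees that $\bar{\lambda}_{\I}$ does not depend on the particular choice of $(\bar{x},\bar{u})$. Theorem \ref{Teoidentificacion} then yields a unique weak-Pontryagin pair $(\bar{p},\bar{q})$ with $\bar{\lambda}_{\I} = I(\bar{p}(0),\bar{p},\bar{q})$.

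Next, I would invoke the sensitivity theorem for convex programs perturbed through the right-hand side of an equality constraint. Convexity of $F$ and affinity of $G$ imply that $v$ is convex in $P$. Local finiteness of $v$ in a neighborhood of $P$ --- which follows from the stability of linear SDEs in Proposition \ref{qmmqemqmndndndndaaa} combined with the quadratic growth of $\ell$ and $\Phi$ from \eqref{crecimientoderivadascosto} --- then yields local Lipschitz continuity of $v$ and the identification of $\partial v(P)$ with the (here singleton) set of Lagrange multipliers $\{\bar{\lambda}_{\I}\}$. A singleton subdifferential of a locally Lipschitz convex function gives G\^ateaux differentiability and, by the standard equivalence for locally Lipschitz maps, Hadamard directional differentiability as well. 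Expanding $Dv(P;\Delta P) = \langle \bar{\lambda}_{\I}, I(\Delta P)\rangle_{\I}$ with the scalar product of $\I^n$ and using $(\bar{\lambda}_{\I})_0 = \bar{p}(0)$, $(\bar{\lambda}_{\I})_1 = \bar{p}$, $(\bar{\lambda}_{\I})_2 = \bar{q}$ directly produces the formula \eqref{mrwmmrnnsnsnsssaaaaasss}.

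The main obstacle I anticipate is the essential but rather technical step of verifying that $v$ is finite and locally bounded on a neighborhood of $P$ in $\P_1$: one must combine the weak stability of linear SDEs (Proposition \ref{qmmqemqmndndndndaaa}) with the quadratic and linear growth estimates of \textbf{(H1)}--\textbf{(H2)} to produce, for every $P'$ close to $P$, an admissible pair $(x[u],u)$ whose cost is bounded uniformly in $P'$; this is what legitimates the application of the abstract subdifferential formula and the passage from convex directional differentiability to continuity and Hadamard differentiability. Once this qualitative local finiteness is secured, the remainder is a direct translation of the classical convex-programming sensitivity theorem to the present It\^o Hilbert space framework.
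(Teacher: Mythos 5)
Your proposal is correct and follows essentially the same route as the paper: recasting $(P_{1,P})$ as a convex program with the parameter entering affinely through the equality constraint, using the bijectivity of $D_xG(\bar{x},\bar{u})$ from Lemma \ref{wrnwnrnwnrnqbqbqbq} to get existence (via Robinson's condition) and uniqueness of the Lagrange multiplier, transferring it to a weak-Pontryagin pair by Theorem \ref{Teoidentificacion} and Corollary \ref{memmndnaaaaasssasasasasasasasa}(ii), and then applying convex sensitivity theory. The only cosmetic difference is that the paper invokes \cite[Theorem 17]{Rockafellarbook1974} as a black box for the continuity and Hadamard/G\^ateaux differentiability of $v$, whereas you re-derive it through local boundedness of $v$ and the singleton subdifferential; just make sure you argue local boundedness above (not mere finiteness) of the convex function $v$, as you indeed indicate in your final paragraph.
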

\begin{proof}  Let us write the problem $(P_{1,P})$ as 
$$ \inf_{(x,u)\in \ito\times \control} F(x,u) + \chi_{\U}(x,u) \; \; \mbox{subject  to } G(x,u) + P= 0, $$
where $\chi_{\U}: \ito \times \control \to \RR\cup \{+\infty\}$ is the convex, proper, l.s.c. function defined as $\chi_{\U}(x,u)=0$ if $u\in \U$ and $+\infty$ otherwise and  
$$ G(x,u)(\cdot):=   \int_{0}^{\cdot}f(t,\omega, x(t),u(t))  \dd t +\int_{0}^{\cdot}\sigma(t,\omega, x(t),u(t))  \dd W(t)-x(\cdot).$$ 
For every $(x,u) \in \ito \times \control$ and $v\in \control$, Lemma \ref{wrnwnrnwnrnqbqbqbq} implies that $ D G(x,u)(\cdot, v)$ is surjective. Therefore, the following regularity condition is trivially satisfied (see e.g.  \cite[Section 3.2]{bstour})
\be\label{regularidadcasocombinado} 0 \in \mbox{int}\left\{  G(x,u)+ P+ DG(x,u)\left( \ito \times \U \right) \right\}.\ee
Thus, by classical results in convex optimization (see e.g. \cite[Section 4.3.2, Example 4.51]{BonDUNOD06} or \cite[Section 2.5]{BonSha}) $(x,u)$ is a solution of $(P_{1,P})$ iff there exists $\lambda \in \ito$ such that 
\be\label{condicionoptimalidadconsubdiferencial}(0,0) \in \partial_{(x,u)} (F(x,u)+\chi_{\U}(x,u))+   D G(x,u)^{*} \lambda .\ee
Since $F$ is differentiable in $\ito \times \control$, in particular it is continuous in $\ito \times \U$, and so (see e.g. \cite[Remark 2.170]{BonSha}) \small
$$ \partial_{(x,u)} (F(x,u)+ \chi_{\U}(x,u))= \partial_{(x,u)}F(x,u)  +\partial_{(x,u)}  \chi_{\U}(x,u) \subseteq (D_{x} F(x,u),D_{u}F(x,u))+ \{0\} \times N_{\U}( u),$$ \normalsize
where $N_{\U}( u):=\{ v^{*} \in \control \; ; \; \langle v^{*}, v-u  \rangle_{L^{2}} \leq 0, \; \; \forall \; v \in \U\} $ is the normal cone to $\U$ at $u$. Using that $D G(x,u)^{*} \lambda= (D_{x} G(x,u)^{*} \lambda, D_{u} G(x,u)^{*} \lambda)$, we obtain with \eqref{condicionoptimalidadconsubdiferencial}
$$ (0,0) \subseteq  (D_{x} F(x,u),D_{u}F(x,u))+ \{0\} \times N_{\U}( u)+ (D_{x} G(x,u)^{*} \lambda, D_{u} G(x,u)^{*} \lambda),$$
which is equivalent to 
\be\label{aqmemqmesssa}  D_{x}  \L(x, u, \lambda)=0  \hspace{0.3cm} \mbox{and }  \hspace{0.3cm} D_{u}  \L(x,u,\lambda)(v-u)\geq 0 \hspace{0.5cm} \forall v \in  \U.\ee
Therefore, $\lambda \in \Lambda_{L}(x,u)$ and by Theorem \ref{Teoidentificacion} and the convexity of the associated Hamiltonian we have that $(\bar{p}, \bar{q}):=(\lambda_{1}, \lambda_{2})$ is weak-Pontryagin multiplier. 
%
Now, let $\lambda_{\I}^{1}$,  $\lambda_{\I}^{2}\in \Lambda_{L}(x, u)$. By the first equation  in \eqref{aqmemqmesssa}, we get that 
 $$ \left\langle \left(D_{x}G(x,u) \right)^{*}( \lambda_{\I}^{1}-  \lambda_{\I}^{2}), z \right\rangle_{\I} =0 \; \; \forall \; z\in \ito, \hspace{0.3cm} \mbox{or   } \; \left(D_{x}G(x, u) \right)^{*}( \lambda_{\I}^{1}-  \lambda_{\I}^{2})=0.$$
 Since, by Lemma  \ref{wrnwnrnwnrnqbqbqbq},  $D_{x}G(x,u): \ito \mapsto \ito$  is surjective we get that    $D_{x}G(x,u)^{*}$ is injective, which implies that $ \lambda_{\I}^{1}=  \lambda_{\I}^{2}$ and by Theorem \ref{Teoidentificacion} the weak-Pontryagin multiplier is unique. The independence of the set $\Lambda_{L}(\cdot)$ over the set of solutions of $(P_{1,P})$ is a consequence of  Corollary \ref{memmndnaaaaasssasasasasasasasa}{\rm(ii)}. Finally, the continuity, the Gâteaux and  Hadamard differentiability of $v$ and  expression \eqref{mrwmmrnnsnsnsssaaaaasss} for $Dv(P; \Delta P)$ are a direct translation of  \cite[Theorem 17]{Rockafellarbook1974}   using the uniqueness of the Lagrange multiplier.
 \end{proof} \smallskip 
 
In the following remark we underline some simple consequences of Theorem \ref{sensibilidad1}:
\begin{remark}\label{mqmennnnaasasasa}
{\rm(i)} The gradient of $v$ at $P$, i.e. the Riesz representative of the bounded linear application $Dv(P; \cdot)$, is given by 
$$\bar{p}(0)+ \int_{0}^{\cdot} \bar{p}(t) \dd t + \int_{0}^{\cdot} \bar{q}(t) \dd W(t).$$
{\rm(ii)} It is well known {\rm(}see e.g. {\rm \cite[Section 2.2]{BonSha}} and the references therein{\rm)} that   for real-valued functions defined on  finite dimensional spaces,  Gâteaux differentiability together with  Hadamard differentiability imply Fr\'echet differentiability. Therefore,    if the perturbations for problem $(P_{1,P})$  are finite dimensional, then $v$ is Fr\'echet differentiable at $P$. This is the case,  for example, if  the initial condition is perturbed and/or the perturbations of the dynamics have the form $\Delta f(t,\omega)=\xi_{0}(t,\omega)A_{0}$,   $(\Delta \sigma (t,\omega))^{j}=\xi_{j}(t,\omega) A_{j}$ with $\xi_{0}$, $\xi_{j} \in (L^{\infty,\infty}_{\FF})^{n\times n}$ {\rm(}$j=1,\hdots,d${\rm)} being fixed, and $A_{0}$, $A_{j} \in \RR^{n}$ being the perturbation parameters.  In fact, defining the new states
$$ \dd y_{0}= 0, \; \; \mbox{for $t\in [0,T]$}, \; \; y_{0}(0)= A_{0}, \; \;  \dd y_{j}= 0, \; \; \mbox{for $t\in [0,T]$}, \;  \; y_{j}(0)= A_{j} \; \; \mbox{for $j=1,\hdots, d$},$$
the new dynamical system is affine w.r.t. $\left( x,(y_0, y_j)\right)$ and the perturbations are performed over the initial condition.  Let us point out that the Fr\'echet differentiability of the value function under finite-dimensional perturbations in our convex framework can also be deduced using {\rm\cite[Chapter 5, Corollary 4.5]{YongZhou}}.
 \smallskip\\
{\rm(iii)} Suppose that the nominal problem is deterministic (and thus $\bar{q}=0$) and only the $\dd W(t)$ part of the dynamics is perturbed, i.e.  $\Delta x_0=0$, $\Delta f \equiv 0$. Then, by \eqref{mrwmmrnnsnsnsssaaaaasss} we directly obtain that $Dv(P; \Delta P)=0$. This fact was already observed by  Loewen {\rm \cite{Loewen87}} for finite dimensional perturbations.  \smallskip\\
{\rm(iv)} A close look at the proof Theorem \ref{sensibilidad1} shows that even if $\ell(\omega, t, \cdot, \cdot)$ and $\Phi(\omega, \cdot)$ are not convex, we can apply the  abstract optimization results {\rm(}see e.g. {\rm\cite[Section 3.1]{BonSha}}{\rm)} in order to derive existence and uniqueness of a Lagrange multiplier at a local solution $\bar{u}$.  More precisely, using \eqref{regularidadcasocombinado} it is possible to show {\rm(}see {\rm\cite[Lemma 3.7]{BonSha})} that if $(x,u)$ is a solution of    problem $(P_{1,P})$ then $(z,v)=(0,0)$ is a solution of 
$$ \inf_{(z,v)\in \ito \times \control} D F(x,u)(z,v) \; \; \mbox{such that } \; \;  DG(x,u)(z,v)=0, \; \;   v \in T_{\U}(u), \; \; \eqno(LP)$$
{\rm(}where $T_{\U}(u)$,  defined as the closure in $\control$ of $\bigcup_{\tau>0} \tau^{-1}(\U-u)$, is the tangent cone to $\U$ at $u$, see {\rm \cite[Proposition 2.55]{BonSha})}. Problem $(LP)$ is a convex one and we can proceed exactly as in the proof of Theorem \ref{sensibilidad1} in order to show the existence and uniqueness of a Lagrange multiplier $\lambda$ at $(0,0)$. It is easy to see that $\lambda$ is a Lagrange multiplier at $(0,0)$ for problem $(LP)$ iff $\lambda$ is a Lagrange multiplier at $(x,u)$ for problem  $(P_{1,P})$. Therefore,  by Theorem \ref{Teoidentificacion}  this argument provides a simple proof  of the existence of weak-Pontryagin multipliers for stochastic problems with non-convex cost and linear dynamics. Let us point out that it is not clear that the general result of {\rm \cite{Peng90}} for the case of nonlinear dynamics, even in the form of a weak-Pontryagin principle, can be derived with the Lagrange multipliers method. In fact, the main issue is the apparent lack of  $C^{1}$ differentiability of  $G(x,u)$ in the non-affine case {\rm(}see Remark \ref{admamdmamnnnssssaaaa}{\rm)}. 
\end{remark}
\normalsize

We consider now the case of {\it final state constraints  without control constraints}\footnote{Actually we can handle also control   and final state constraints simultaneously under a suitable qualification condition  (see \cite[Section 3.2]{bstour}). However, for the sake of simplicity we preferred to state the results for both types of constraints separately.}. We set as parameter    set the space $\P_{2}:= \RR^{n} \times (L^{2,2}_{\FF})^{n}  \times (L^{2,2}_{\FF})^{n \times d} \times \RR^{n_E}\times \RR^{n_{I}}$. Let $P:=( x_0,  \hat{f},   \hat{\sigma}, \delta_{E}, \delta_{I}) \in \P_{2}$  and   consider the problem 
\small
$$\ba{l}   \ds \inf_{(x,u) \in \I^{n} \times (L^{2,2}_{\FF})^{m}} \EE\left( \int_{0}^{T} \ell(t,\omega,x(t), u(t)) \dd t + \Phi(\omega,x(T))\right)\\[4pt]
\mbox{s.t. } \hspace{0.4cm}\left\{\ba{rcl} \dd x (t)&=&  [f(t,\omega, x(t),u(t))+\hat{f}(t,\omega)]  \dd t  \\[4pt]
								\;         & \; & +[\sigma(t,\omega, x(t),u(t))+ \hat{\sigma}(t,\omega)] \dd W(t),  \\[4pt]
								    x(0)& =& x_0, \\[4pt]
								    \EE\left(\Phi_{E}^{i}(\omega, x(T)) \right)  &=&-\delta^{i} \hspace{0.5cm} \mbox{for all } i=1,\hdots,n_{E}, \\[4pt]
								    \EE\left(\Phi_{I}^{j}(\omega,x(T)) \right)  &\leq &-\delta^{j} \hspace{0.5cm} \mbox{for all } j=1,\hdots,n_{I}. \ea\right.
\ea \eqno(P_{2,P})$$ \normalsize
We will assume that:\smallskip\\
\textbf{(H4)} For almost all $\omega \in \Omega$  the functions $\Phi_{E}^{i}(\omega,\cdot)$ ($i=1,\hdots, n_{E}$) are affine and  $\Phi_{I}^{j}(\omega,\cdot)$ ($j=1,\hdots, n_{I}$) are convex.  \vspace{0.4cm}

The proof of the following result follows the same lines as those in the proof of Theorem \ref{sensibilidad1} and therefore is omitted.  Recall that  $G$ is defined in \eqref{meqmemnnrnnrnr} and  $G_{E}$, $G_{I}$ are defined in \eqref{mamsmnnrnnrnrs}.
\begin{theorem}\label{madnandeueueuaasas} Assume   \textbf{(H1)}-\textbf{(H4)} and  that for $P \in \P_{2}$  problem  $ (P_{2,P})$ admits at least one solution  $(\bar{x}, \bar{u})$. Suppose in addition  that the following Slater constraint qualification condition at $(\bar{x},\bar{u})$ holds
$$\small
 \left.\ba{l} {\rm(i)} \; \; \left(DG(\bar{x}, \bar{u}), DG_{E}(\bar{x})\right): \ito \times \control \mapsto \ito  \times \RR^{n_{E}} \; \; \mbox{is surjective and }\\[4pt]
				{\rm(ii)} \; \; \exists \; (\hat{z}, \hat{v}) \in \ito \times \control \; \;  ; \;  G(\hat{z},  \hat{v})=0, \; \;  G_{E}(\hat{z},  \hat{v})=0, \; \ G_{I}^{j}(\hat{z})<0 \; \, \forall \; j=1,\hdots,, n_I.  \ea \right\}  \eqno(S)$$\normalsize
Then, the set  of weak-Pontryagin multipliers $\Lambda_{wP}(\bar{x},\bar{u})\subset \ito\times\RR^{n_E+n_I}$ at any solution $(\bar{x}, \bar{u})$  is a non-empty, weakly compact set, which is independent of the solution $(\bar{x}, \bar{u})$.    Moreover, the value function $v$ is continuous  at $P$, Hadamard  directionally differentiable at $P$  and its directional derivative $Dv(P; \cdot): \P_{2} \to \RR$ is given by 
 \small	
$$\ba{ll}Dv(P; \Delta P)=  \ds \max_{(p,q, \lambda_{E}, \lambda_{I}) \in \Lambda_{wP}(\bar{x}, \bar{u})}  &\left\{ p(0)^{\top} \Delta x_0 +   \EE\left( \int_{0}^{T} p(t)^{\top}  \Delta f(t) \dd t\right) +  \EE\left( \int_{0}^{T}  \mbox{{\rm tr}}\left[q(t)^{\top}  \Delta \sigma (t)\right] \dd t\right)\right. \\[4pt]
\; & +\left. \lambda_{E}^{\top} \Delta \delta_{E}+ \lambda_{I}^{\top} \Delta \delta_{I} \right\},\ea$$ \normalsize
for all  $ \; \Delta P =(\Delta x_0, \Delta f, \Delta \sigma, \Delta \delta_{E},  \Delta \delta_{I} ) \in \P_2$.
\end{theorem}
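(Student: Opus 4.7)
The strategy is to interpret $(P_{2,P})$ as a parametric convex program in a Banach space and apply the abstract sensitivity theory of \cite{Rockafellarbook1974,BonSha}, combined with the identification in Theorem \ref{Teoidentificacion}. I would start by introducing the continuous affine embedding $\iota:\P_1\to \I^n$ defined by
$$\iota(x_0,\hat f,\hat\sigma)(\cdot):= x_0 + \int_0^\cdot \hat f(s)\,\dd s + \int_0^\cdot \hat\sigma(s)\,\dd W(s),$$
so that the SDE constraint in $(P_{2,P})$ is equivalent to $G(x,u)+\iota(x_0,\hat f,\hat\sigma)=0$. Under \textbf{(H3)}--\textbf{(H4)}, $F$ is convex, $G$ and $G_E$ are affine (the latter via $G_E$-affinity of $\Phi_E$ composed with a linear SDE), $G_I$ is convex, and $\iota$ is linear continuous; hence $(P_{2,P})$ fits the framework of a convex parametric program with parameter $P\in\P_2$.

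The first task is to establish the claims concerning $\Lambda_{wP}(\bar x,\bar u)$. Condition (S) is precisely Robinson's constraint qualification for this problem: (S)(i) supplies surjectivity of the linearization of the equality part onto $\I^n\times\RR^{n_E}$, and (S)(ii) supplies a strictly feasible point for the inequality constraints. By the Lagrange multiplier theorem for convex programs under Robinson regularity (see e.g.\ \cite[Theorem 3.6 and Proposition 3.17]{BonSha}), the set of Lagrange multipliers $\Lambda_L(\bar x,\bar u)\subset \I^n\times\RR^{n_E}\times\RR_+^{n_I}$ at any solution $(\bar x,\bar u)$ is nonempty, convex, bounded, and weakly closed, hence weakly compact. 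Because the problem is convex, $\Lambda_L$ coincides with the set of saddle points of the Lagrangian, and therefore does not depend on the particular optimizer. Transporting these conclusions through the bijection of Theorem \ref{Teoidentificacion}, which is linear and continuous with respect to the natural topologies on $\I^n$ and $(L^{2,2}_{\FF})^{n\times d}$, yields the stated properties for $\Lambda_{wP}(\bar x,\bar u)$.

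For the sensitivity part, I would invoke Rockafellar's theorem \cite[Theorem 17]{Rockafellarbook1974} (or equivalently \cite[Theorem 4.24]{BonSha}), which asserts that under the stability produced by Robinson's condition the convex value function $v$ is continuous at $P$, Hadamard directionally differentiable at $P$, and
$$Dv(P;\Delta P)= \max_{(\lambda_\I,\lambda_E,\lambda_I)\in \Lambda_L}\Big\{\langle \lambda_\I,\,\iota(\Delta x_0,\Delta f,\Delta\sigma)\rangle_\I + \lambda_E^\top \Delta\delta_E + \lambda_I^\top \Delta\delta_I\Big\}.$$
Unfolding the Hilbert inner product on $\I^n$ introduced in Section \ref{preliminares}, and replacing $\lambda_\I$ by $(p,q)$ through the formula $\lambda_\I = p(0)+\int_0^\cdot p(t)\,\dd t+\int_0^\cdot q(t)\,\dd W(t)$ supplied by Theorem \ref{Teoidentificacion}, converts the above expression into the one stated in the theorem in terms of $(p,q,\lambda_E,\lambda_I)\in\Lambda_{wP}(\bar x,\bar u)$.

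The main obstacle is to verify carefully that (S) genuinely implies the metric regularity (equivalently Robinson's condition) that underlies the cited sensitivity theorem in the present mixed Hilbert-space / finite-dimensional-constraint setting. In practice this reduces to an open-mapping-type argument at the linearized level: surjectivity of the equality part, combined with the existence of an interior direction for the inequality part, produces openness of the constraint multifunction at $P$. Once this is established, the remainder of the argument is a routine adaptation of the proof of Theorem \ref{sensibilidad1}; the only genuine novelty is that the finite-dimensional multipliers $(\lambda_E,\lambda_I)$ are no longer uniquely determined, which is precisely why the directional derivative appears as a support function rather than as a linear form.
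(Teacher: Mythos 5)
Your proposal is correct and follows essentially the route the paper itself indicates: the paper omits the proof of this theorem, stating it follows the same lines as Theorem \ref{sensibilidad1}, namely casting $(P_{2,P})$ as a convex program, using the Slater/Robinson qualification $(S)$ to obtain a nonempty, convex, bounded (hence weakly compact) and solution-independent set of Lagrange multipliers, identifying them with weak-Pontryagin multipliers via Theorem \ref{Teoidentificacion}, and applying Rockafellar's sensitivity theorem \cite[Theorem 17]{Rockafellarbook1974} before unfolding the $\I^{n}$ inner product to get the stated max-formula. Your reconstruction, including the affine embedding of the perturbation and the observation that non-uniqueness of $(\lambda_E,\lambda_I)$ is what turns the derivative into a support function, is faithful to that argument.
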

\begin{remark}\label{mmdnsnqwwwwwwwqeodoff}{\rm(i)} Note that if no inequality constraints are present (which can be written as $n_{I}=0$), the qualification condition for $(P_{2,P})$ is given by $(S)${\rm(i)}. In this case, as in Theorem \ref{sensibilidad1}, we get the uniqueness of the multiplier and thus $v$ is also Gâteaux differentiable at $P$.

{\rm(ii)}  Since $(G,G_{E})$ is affine and $G^{j}_{I}$ {\rm(}$j=1,\hdots, n_I${\rm)} are  convex, we have that the Slater condition $(S)$ is equivalent to the following Mangasarian-Fromovitz condition
$$\small
 \left.\ba{l} {\rm(a)} \; \; \left(DG(\bar{x}, \bar{u}), DG_{E}(\bar{x})\right): \ito \times  \control \mapsto \ito  \times \RR^{n_{E}} \; \; \mbox{is surjective and }\\[4pt]
				{\rm(b)} \; \; \exists \; (\hat{z}, \hat{v}) \in \mbox{{\rm Ker}}DG(\bar{x}, \bar{u}) \cap  \mbox{{\rm Ker}}DG_{E}(\bar{x}) \; \; \mbox{such that } DG_{I}^{j}(\bar{x})\hat{z}<0 \; \, \forall \; j=1,\hdots,, n_I.  \ea \right\}  \eqno(MF)$$\normalsize
Condition $(MF)$ has been stated in the literature {\rm(}see e.g. {\rm \cite{bonnanssilva12})} for the reduced optimal control problem $(\mathcal{SP'})$. More precisely,   for $v\in \control$ let $z[v]\in \ito$ be defined by the equation $DG(\bar{x},\bar{u})(z,v)=0$. Since this is a standard linear SDE in the variable $z$, under our assumptions, we get that $z[v]$ is well defined.  We check then that $(MF)$ is equivalent to 
$$\small
 \left.\ba{l} {\rm(a')} \; \;   v \in  \control \to DG_{E}(\bar{x})z[v] \in \RR^{n_{E}}  \hspace{0.2cm}  \mbox{is surjective and }\\[4pt]
				{\rm(b')} \; \; \exists \; \hat{v}  \in \control \; \; \mbox{such that } \;   DG_{E}(\bar{x})z[v]=0 \; \; \mbox{and } DG_{I}^{j}(\bar{x})z[v]<0 \; \, \forall \; j=1,\hdots,, n_I.  \ea \right\} $$\normalsize
\end{remark}
\subsection{Multiplicative perturbations in the Linear Quadratic  framework}\label{sens2}
In this part we adopt the framework of {\it unconstrained}  Linear Quadratic (LQ) stochastic control problems with random coefficients (see e.g \cite{Bismut76a,ChenYong01,Tang,YongZhou} and the references therein). More precisely, let us consider the problem
\small
$$\ba{l}  \ds \inf_{(x,u) \in \I^{n} \times (L^{2,2}_{\FF})^{m}}  F(x,u) := \frac{1}{2}\EE\left( \int_0^T \left[ x(t)^{\top}Q(t)x(t) + u(t)^{\top}N(t)u(t) \right] \dd t +x(T)^{\top}Mx(T)\right) \\[6pt]

\mbox{s.t. } \left\{\ba{l}  \dd x(t)= [A(t)x(t)+B(t)u(t)+e(t)]\dd t+ \sum\limits_{j=1}^d [C^j(t)x(t)+D^j(t)u(t)+f^j(t)]\dd W^j(t),\\
x(0)  \;   = x_0.\ea \right. \ea\eqno(P_{3,P})
$$\normalsize
We shall view $P = (x_0, A, B, C^j, D^j, e, f^{j})$ ($j=1,\hdots, d$) as parameters for the problem $(P_{3,P})$. Thus, we consider as  parameter space
$$\mathcal{P}_3= \RR^{n} \times (L^{\infty,\infty}_{\FF})^{n\times n}\times (L^{\infty,\infty}_{\FF})^{n\times m} \times (L^{\infty,\infty}_{\FF})^{(n\times n)\times d} \times (L^{\infty,\infty}_{\FF})^{ (n\times m) \times d }\times (L^{2,2}_{\FF})^n\times   (L^{2,2}_{\FF})^{n\times d }.$$ 
It is well known (see \cite[Theorem 2.1]{Bismut76a}) that given $P \in \P_{3}$ and $u \in \control$ the linear  SDE in $(P_{3,P})$ admits a unique solution in $\I^{n}$. We will also need the following result:
\begin{lemma} 
\label{regularidadLQ}
The constraint function
$G:\ito \times \control \times \mathcal{P}_3 \mapsto \I^{n}$ defined by:
$$
\ba{lll}
G(x,u,P)&:=& - x(\cdot) + x_0 +\int_0^{\cdot} [A(t)x(t)+B(t)u(t)+e(t)]\dd t \\
\; & \; & + \int_0^{\cdot} \sum\limits_{j=1}^d [C^j(t)x(t)+D^j(t)u(t)+f^j(t)]\dd W^j(t),
\ea
$$
is continuously Fr\`echet  differentiable. Furthermore, $D_{(x,u)}G(x,u,P)$ is onto. 
\end{lemma}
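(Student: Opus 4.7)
The plan is to exploit the affine-plus-bilinear structure of $G$. Writing $P = (x_0, A, B, C^j, D^j, e, f^j)$, the map decomposes as a linear-in-$(x,u,P)$ piece $-x(\cdot) + x_0 + \int_0^\cdot e(t)\,dt + \sum_j \int_0^\cdot f^j(t)\,dW^j(t)$ plus the four bilinear terms $(A,x) \mapsto \int_0^\cdot A(s)x(s)\,ds$, $(B,u)\mapsto \int_0^\cdot B(s)u(s)\,ds$, $(C^j,x) \mapsto \int_0^\cdot C^j(s)x(s)\,dW^j(s)$ and $(D^j,u) \mapsto \int_0^\cdot D^j(s)u(s)\,dW^j(s)$. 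The first check is well-definedness into $\ito$: using the continuous embedding $\ito \hookrightarrow (L^{2,\infty}_{\FF})^n$ of Lemma \ref{meqwnenenqneqbebqbs}, the $L^{\infty,\infty}$ bounds on $A, B, C^j, D^j$, and the $L^{2,2}$ regularity of $u$, $e$, $f^j$, each term produces a drift in $(L^{2,2}_{\FF})^n$ or a diffusion row in $(L^{2,2}_{\FF})^{n\times d}$, so by Lemma \ref{qnernqnnaaaaAAAa} the whole expression lies in $\ito$.

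For continuous Fr\'echet differentiability, the linear (in $(x,u,P)$) part is trivially $C^\infty$. Each of the four bilinear maps listed above is bounded between the relevant pair of Banach spaces; for instance the map $(A,x) \in (L^{\infty,\infty}_{\FF})^{n\times n} \times \ito \to \int_0^\cdot A\,x\,dt \in \ito$ is bilinear and bounded by a constant times $\|A\|_{\infty,\infty}\|x\|_{2,\infty}$, and analogous estimates for the stochastic integrals follow from It\^o's isometry. A bounded bilinear map between Banach spaces is automatically of class $C^\infty$ with derivative given by the product rule, so summing yields that $G$ is $C^\infty$ with explicit Fr\'echet derivative
\begin{align*}
DG(x,u,P)(\Delta x, \Delta u, \Delta P)(\cdot)
&= -\Delta x(\cdot) + \Delta x_0 + \int_0^\cdot \bigl[\Delta A(t)\,x(t) + A(t)\,\Delta x(t) + \Delta B(t)\,u(t) \\
&\quad + B(t)\,\Delta u(t) + \Delta e(t)\bigr]\,dt + \sum_{j=1}^d \int_0^\cdot \bigl[\Delta C^j(t)\,x(t) + C^j(t)\,\Delta x(t) \\
&\quad + \Delta D^j(t)\,u(t) + D^j(t)\,\Delta u(t) + \Delta f^j(t)\bigr]\,dW^j(t).
\end{align*}
Continuity of $(x,u,P) \mapsto DG(x,u,P)$ in operator norm then follows immediately from the boundedness of the underlying bilinear operators (a bounded bilinear map has locally Lipschitz derivative).

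For the surjectivity of the partial derivative $D_{(x,u)}G(x,u,P): \ito \times \control \to \ito$, I would argue exactly as in the last step of the proof of Lemma \ref{wrnwnrnwnrnqbqbqbq}. Given any $\xi \in \ito$ with decomposition $\xi = \xi_0 + \int_0^\cdot \xi_1(t)\,dt + \int_0^\cdot \xi_2(t)\,dW(t)$, I set $\Delta u = 0$ and look for $\Delta x \in \ito$ solving $D_{(x,u)}G(x,u,P)(\Delta x, 0) = \xi$; this is the linear SDE
\begin{align*}
\Delta x(0) = -\xi_0, \qquad d\Delta x(t) = \bigl[A(t)\Delta x(t) - \xi_1(t)\bigr]\,dt + \sum_{j=1}^d \bigl[C^j(t)\Delta x(t) - \xi_2^j(t)\bigr]\,dW^j(t),
\end{align*}
which, given the $L^{\infty,\infty}$ bounds on $A, C^j$ and the $L^{2,2}$ inhomogeneities, admits a unique solution $\Delta x \in \ito$ by classical linear-SDE theory (cf.\ \cite[Theorem 2.1]{Bismut76a}). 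The only substantive point in the whole argument is the bilinear-map observation used for smoothness; everything else is routine bookkeeping and a citation of linear SDE well-posedness.
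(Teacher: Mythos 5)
Your proof is correct, and its differentiability part takes a slightly different (more structural) route than the paper's. The paper proceeds as in Lemma \ref{wrnwnrnwnrnqbqbqbq}: it computes the G\^ateaux derivative of $G$ directly, then estimates $\|DG(x_1,u_1,P_1)(x',u',P')-DG(x_2,u_2,P_2)(x',u',P')\|_{\I}^2$ explicitly, obtaining a Lipschitz-type bound in $(x,u,P)$ which shows the directional derivative is continuous in operator norm and hence that $G$ is continuously Fr\'echet differentiable; surjectivity of $D_{(x,u)}G$ is then quoted from Lemma \ref{wrnwnrnwnrnqbqbqbq}. You instead decompose $G$ into an affine part plus four bounded bilinear maps (such as $(A,x)\mapsto \int_0^\cdot A x\,\dd t$ and $(C^j,x)\mapsto\int_0^\cdot C^j x\,\dd W^j$) and invoke the general fact that a bounded bilinear map between Banach spaces is smooth with product-rule derivative and (globally) Lipschitz derivative map; this yields the same explicit expression for $DG$ and even the stronger conclusion that $G$ is $C^\infty$, not just $C^1$. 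The substance is the same in both arguments — your boundedness estimates for the bilinear pieces (via the embedding $\ito\hookrightarrow (L^{2,\infty}_{\FF})^n$ of Lemma \ref{meqwnenenqneqbebqbs}, the $L^{\infty,\infty}$ bounds on the matrix coefficients and the It\^o isometry) are exactly the estimates the paper writes out when bounding the difference of derivatives — but your packaging is softer and shorter, while the paper's version keeps the constants visible and mirrors the technique already used for the nonlinear $G$ of Lemma \ref{wrnwnrnwnrnqbqbqbq}. Your surjectivity argument (set $\Delta u=0$, solve the linear SDE with data $(-\xi_0,-\xi_1,-\xi_2)$ via \cite[Theorem 2.1]{Bismut76a}) is precisely the bijectivity argument inside Lemma \ref{wrnwnrnwnrnqbqbqbq} that the paper cites, so there is no gap there either.
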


\begin{proof}
That $G$ is well defined is a simple application of Lemma \ref{meqwnenenqneqbebqbs}. Following the lines of the proof of Lemma \ref{wrnwnrnwnrnqbqbqbq}  we have that $G$ is Gâteaux differentiable at any $(x,u, P) \in \I^{n}\times \control \times \P_{3}$ and for every  $(x',u') \in \I^n \times \control$ and  $P'=(x_{0}',A',B',\{(C^j)'\},\{(D^j)'\},e',\{(f^j)'\})\in \P_3$ we have that    
$$ 
\ba{lll}
DG(x,u,P)(x',u',P')&=&  \int_{0}^{\cdot} [Ax'+Bu'+e']\dd t+ \int_{0}^{\cdot}\sum\limits_{j=1}^d [C^{j}x'(t)+D^ju'+(f')^j]\dd W^j(t)\\
\; & \; & + \int_{0}^{\cdot} [A'x+B'u]\dd t+\int_{0}^{\cdot} \sum\limits_{j=1}^d [(C^j)'X+(D^j)'u]\dd W^j(t)+x_0'-x'(\cdot).
\ea
$$
Thus, for every $(x_{1}, u_{1})$, $(x_{2}, u_{2})\in \ito \times \control$  and $P_{1}$, $P_{2}\in \P_{3}$ we have that 
$$ \|DG(x_1,u_1,P_1)(x',u',P')- DG(x_2,u_2,P_2)(x',u',P')\|_{\I}^{2} $$
is given by  
$$
\ba{l}
\EE\left( \int_{0}^{T}\left| (A_1-A_2)x'+(B_1-B_2)u'+A'(x_1-x_2)+B'(u_1-u_2)\right|^2   \dd t \right ) +\\
\EE \left ( \sum_{j=1}^{d}\int_{0}^{T} \left| (C_1^j-C_2^j)x'+(D_1^j-D_2^j)u'+(C^j)'(x_1-x_2)+(D^j)'(u_1-u_2)\right|^2 \dd t \right).
\ea
$$
Therefore, if $\|P'\|=1$, we find that $ \|DG(x_1,u_1,P_1)P'- DG(x_2,u_2,P_2)P'\|_{\I}^{2}$  is bounded by     \footnotesize
$$
c\left(  \| x_{1}-x_{2}\|_{\I}^{2}+\| u_{1}-u_{2}\|_{2}^{2} + \| A_{1}- A_{2}\|_{\infty}^{2}+ \| B_{1}- B_{2}\|_{\infty}^{2}+ \sum_{j=1}^{d}\left[\| C^j_{1}- C^j_{2}\|_{\infty}^{2}+ \| D^j_{1}- D^j_{2}\|_{\infty}^{2}\right] \right),
$$
\normalsize
for some $c>0$, where we used Lemma \ref{meqwnenenqneqbebqbs} to make $\|\cdot\|_{\I}$ appear. Thus, $G$ is G\^ateaux differentiable with a continuous directional derivative, and so   $G$ is indeed Fr\`echet continuously differentiable. The surjectivity of $D_{(x,u)}G(x,u,P)$ follows from  Lemma \ref{wrnwnrnwnrnqbqbqbq}.
%
%
\end{proof}\vspace{0.3cm}

 We make the following convexity assumption:\smallskip\\
\textbf{(H5) } The matrix processes $Q:[0,T]\times\Omega\mapsto\RR^{n\times n}$, $N:[0,T]\times\Omega\mapsto\RR^{m\times m}$ are essentially bounded and progressively measurable, whereas the matrix $M:\Omega\mapsto\RR^{n\times n}$ is essentially bounded and $\F_T$-measurable. In addition $Q$, $N$ and $M$ are a.s. non-negative  symmetric  matrices and further there exists $\delta>0$ such that  $N\succeq\delta I$.\vspace{0.25cm}

By  \cite[Theorem 3.1]{Bismut76a} we have that  under \textbf{(H5)} problem $(P_{3,P})$ admits a unique solution $(x[P],u[P])$.
%
%
%
Moreover, by \cite[Theorem 3.2]{Bismut76a} (or  Theorem \ref{sensibilidad1})  we obtain the existence of a unique weak-Pontryagin multiplier $(p[P], q[P]) \in \ito \times (L^{2,2}_{\FF})^{n\times d}$ such that
\be\label{qmmeqmenqneaeweqeqqqsa}
\small 
\ba{rll}
\dd  x(t)&=& [A(t) x(t)+B(t) u(t)+e(t)]\dd t+ \sum\limits_{j=1}^d [C^j(t) x(t)+D^j(t) u(t)+f^j(t)]\dd W^j(t), \\[4pt]
u(t)&=& -N(t)^{-1}\left[B(t)^{\top}p(t)+\sum_{j=1}^d D^j(t)^{\top}q^{j}(t) \right], \\[4pt]
\dd p(t) &=& -[A(t)^{\top}p(t) + \sum_{j=1}^d C^j(t)^{\top}q^{j}(t) + Q(t)x(t) ]\dd t+\sum_{j=1}^d q^{j}(t) \dd W^j(t), \\[4pt]
x(0)=x_0&,&  p(T)=Mx(T),
\ea 
\normalsize 
\ee
where we have omitted  the dependence on $P$ in order to simplify the notation. We want to obtain now an energy estimate for $(x[P], u[P],p[P],q[P])$  in terms of $P$, in the spirit of \cite[Theorem 2.2]{Tang}. Because we need to keep track of the constants that will appear (since they depend on model parameters, which we shall later vary) we prove the following Lemma:

\begin{lemma}
\label{PontTang} Under \textbf{(H5)} there exists a continuous function $\beta: \P_{3} \to \RR$ such that  
$$ \| x[P] \|_{\I}^{2} + \| u[P]\|_{2,2}^{2} + \| p[P] \|_{\I}^{2} + \sum_{j=1}^{d} \| q^{j}[P]\|_{2,2}^{2} \leq\beta(P).$$
\end{lemma}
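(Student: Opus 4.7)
The plan is to chain together three successive a priori estimates (forward SDE, optimality-based bound on the control, and backward SDE), being careful to track how every intervening constant depends on $P$.

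\emph{Step 1: bounding $\|u[P]\|_{2,2}$ via optimality.} Since $(x,u)=(x^{0}[P],0)$ is admissible, where $x^{0}[P]$ denotes the solution of the forward SDE in $(P_{3,P})$ with the control set to zero, optimality gives $F(x[P],u[P])\le F(x^{0}[P],0)$. Assumption \textbf{(H5)} forces $F(x,u)\ge \tfrac{\delta}{2}\|u\|_{2,2}^{2}$, so
$$\tfrac{\delta}{2}\|u[P]\|_{2,2}^{2}\ \le\ F(x^{0}[P],0).$$
Standard linear-SDE estimates for $x^{0}[P]$ (Gronwall combined with the Burkholder--Davis--Gundy inequality, exactly as in \cite[Chap.\,6, \S 4]{YongZhou}) give
$$\|x^{0}[P]\|_{2,\infty}^{2}\ \le\ C_{1}\!\left(|x_{0}|^{2}+\|e\|_{2,2}^{2}+\sum_{j}\|f^{j}\|_{2,2}^{2}\right),$$
with $C_{1}$ a continuous (in fact polynomial and exponential) function of $\|A\|_{\infty,\infty}$, $\|C^{j}\|_{\infty,\infty}$ and $T$. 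Combined with the $L^{\infty}$ bounds on $Q$ and $M$ (themselves continuous functions of $P$) this yields a continuous function $\phi_{1}$ with $F(x^{0}[P],0)\le \phi_{1}(P)$.

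\emph{Step 2: bounding $\|x[P]\|_{\I}$.} Now plug the bound on $\|u[P]\|_{2,2}$ back into the forward SDE of \eqref{qmmeqmenqneaeweqeqqqsa}. Again by Gronwall plus BDG,
$$\|x[P]\|_{2,\infty}^{2}\le C_{2}\!\left(|x_{0}|^{2}+\|e\|_{2,2}^{2}+\|B\|_{\infty,\infty}^{2}\|u[P]\|_{2,2}^{2}+\sum_{j}\|f^{j}\|_{2,2}^{2}+\sum_{j}\|D^{j}\|_{\infty,\infty}^{2}\|u[P]\|_{2,2}^{2}\right),$$
with $C_{2}$ continuous in the remaining parameters. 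Since $\|x[P]\|_{\I}^{2}=|x_{0}|^{2}+\|x_{1}\|_{2,2}^{2}+\sum_{j}\|x_{2}^{j}\|_{2,2}^{2}$ where $x_{1}$ and $x_{2}^{j}$ are the drift and diffusion terms of the SDE, applying the same $L^{\infty}$ bounds on $A,B,C^{j},D^{j}$ gives $\|x[P]\|_{\I}^{2}\le\phi_{2}(P)$ for some continuous $\phi_{2}$.

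\emph{Step 3: bounding $(p[P],q[P])$.} The adjoint is the solution of a linear BSDE whose terminal datum $Mx[P](T)$ is bounded in $(L^{2}_{\F_{T}})^{n}$ by $\|M\|_{\infty,\infty}\|x[P]\|_{2,\infty}$ and whose source term $Qx[P]$ is bounded in $(L^{2,2}_{\FF})^{n}$ by $\|Q\|_{\infty,\infty}\|x[P]\|_{2,2}$. Applying the standard $L^{2}$ estimate for linear BSDEs (see \cite[Chap.\,7, Thm.\,2.2]{YongZhou}) yields
$$\|p[P]\|_{2,\infty}^{2}+\sum_{j}\|q^{j}[P]\|_{2,2}^{2}\le C_{3}\!\left(\EE|Mx[P](T)|^{2}+\|Qx[P]\|_{2,2}^{2}\right),$$
with $C_{3}$ continuous in $\|A\|_{\infty,\infty}$ and $\|C^{j}\|_{\infty,\infty}$. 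Reading off $p_{1}=-(A^{\top}p+\sum_{j}(C^{j})^{\top}q^{j}+Qx)$ and $p_{2}=q$ from the BSDE we convert the preceding bound into one on $\|p[P]\|_{\I}^{2}$; combining with Step 2 we obtain $\|p[P]\|_{\I}^{2}+\sum_{j}\|q^{j}[P]\|_{2,2}^{2}\le \phi_{3}(P)$ for a continuous $\phi_{3}$.

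Setting $\beta(P):=\phi_{1}(P)/(\delta/2)+\phi_{2}(P)+\phi_{3}(P)$ finishes the proof. The only subtlety is bookkeeping: one must verify at each application of Gronwall/BDG that the constants produced depend \emph{continuously} on the norms of the coefficients of $P$; but since these norms are continuous functions of $P\in\P_{3}$ and the classical estimates produce polynomial--exponential expressions in them, continuity of $\beta$ is automatic.
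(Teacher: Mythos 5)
Your proof is correct, but it follows a genuinely different route from the paper's. You bound the control first, by comparing the optimal cost with the cost of the zero control: feasibility of $(x^{0}[P],0)$ plus the coercivity $N\succeq\delta I$ and nonnegativity of $Q,M$ give $\tfrac{\delta}{2}\|u[P]\|_{2,2}^{2}\le F(x^{0}[P],0)$, and then you propagate this forward through the state SDE and backward through the adjoint BSDE. The paper proceeds in the opposite order: it first derives the energy identity $\EE\left(x(T)^{\top}Mx(T)+\int_0^T[x^{\top}Qx+u^{\top}Nu]\,\dd t\right)=p(0)^{\top}x_0+\EE\left(\int_0^T[p^{\top}e+\sum_j (q^{j})^{\top}f^j]\,\dd t\right)$ from Lemma \ref{usoito} applied to the optimality system, combines it with the comparison $k_L L\succeq L^2$ and Young's inequality to bound $\|p\|_{2,\infty}^{2}+\sum_j\|q^{j}\|_{2,2}^{2}$ by a quantity involving only $x_0$, $e$, $f^j$ and the coefficient norms (estimate \eqref{mmanndnndnnpaspapspkke}), and only then extracts the bound on $\|u\|_{2,2}$ from $N\succeq\delta I$ and the same identity, finishing with the forward estimate on $x$. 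Your argument is more elementary (no duality identity, no absorption trick) and is essentially the direct-method comparison with a fixed admissible control; the paper's argument yields as a by-product an adjoint estimate that does not pass through the bound on $x$ or $u$, which is a somewhat sharper intermediate statement, though for the lemma itself both routes deliver a continuous $\beta$. One small imprecision on your side: $Q$, $N$, $M$ are fixed cost data, not components of $P\in\P_{3}$, so describing their $L^{\infty}$ bounds as ``continuous functions of $P$'' is off — but since constants are trivially continuous in $P$, this does not affect the conclusion.
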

\begin{proof} 
For notational convenience we will omit the dependence on $P$ of $(x[P], u[P], p[P], q[P])$. A close look at the classical proof for the stability of solutions to linear SDEs  (see e.g. \cite[Chapter 6, Section 4]{YongZhou})  and of linear BSDEs (see e.g. \cite[ Chapter 7, Theorem 2.2]{YongZhou})   gives that
\be\label{menqnoooasqwqwqw}\ba{rcl}
\| x \|_{2,\infty}^{2}&\leq & \kappa_{0}(P)\left(\| u \|_{2,2}^{2} + |x_0|^{2} +\| e\|_{2,2}^{2} + \sum_{j=1}^{d} \| f^{j}\|_{2,2}^{2}\right), \\[4pt]
\| p\|_{2,\infty}^{2} + \sum_{j=1}^{d} \| q^{j}\|_{2,2}^{2}  &\leq& \kappa_1(P) \EE\left (|M(T)x(T)|^2+\int_0^{T}|Q(t)x(t)|^2 \dd t \right ),\ea\ee
where \footnotesize
$$\kappa_{0}=\kappa_{0}(\|A\|_{\infty,\infty}, \|B\|_{\infty,\infty}, ,\sum_{j=1}^{d}\|C^j\|_{\infty,\infty}, ,\sum_{j=1}^{d}\|D^j\|_{\infty,\infty}), \; \; \mbox{and } \; \; \kappa_1=\kappa_1\left(\|A\|_{\infty,\infty},\sum_{j=1}^{d}\|C^j\|_{\infty,\infty}\right),$$\normalsize
are  continuous functions. Recall that for a symmetric non-negative matrix $L\in \RR^{n\times n}$ one has that $k_{L} L  \succeq L^2$ for $k_{L}$ equals the largest eigenvalue of $L$. It is easy to check that $k_{L} \leq n \max_{i, j \in \{1, \hdots, n\}} |L^{ij}|$.
Applying this we see that  
\be\label{qmmqqqqqenqnsada}\ba{rcl}\int_0^{T}|Q(t)x(t)|^2\dd t&\leq& c  \int_0^{T} x(t)^{\top} Q(t) x(t)   \dd t,\\[4pt]
|M(T)x(T)|^2&\leq& c  x(T)^{\top} M(T)x(T),\ea\ee\normalsize
where $c=n \max\{\| Q\|_{\infty,\infty},  \| M\|_{\infty}\}$. 
Now, combining Lemma \ref{usoito} and \eqref{qmmeqmenqneaeweqeqqqsa}, we get  \small
\be\label{qmmndnndndnndndnaasasada}  \EE\left( x(T)^{\top}M(T)x(T) +  \int_0^{T}\left[   x^{\top} Q x+ u^{\top}Nu\right] \dd t \right)= p(0)^{\top}x_0+\EE\left(\int_0^{T}\left [ p^{\top}e+\sum_{j=1}^{d} (q^{j})^{\top}f^j\right ]\dd t\right).\ee\normalsize
Therefore, by the second inequality in \eqref{menqnoooasqwqwqw}, \eqref{qmmqqqqqenqnsada} and \eqref{qmmndnndndnndndnaasasada} we have that 
$$\| p\|_{2,\infty}^{2} + \sum_{j=1}^{d} \| q^{j}\|_{2,2}^{2} \leq c \kappa_1 \left\{ |p(0)| |x_0|+\EE\left ( \int_0^T \left|p^{\top}e+\sum_{j=1}^{d} (q^{j})^{\top}f^j\right|\dd t\right)\right\}. $$
Using now the inequality $2 ab \leq a^2+b^2$ for all $a$, $b\in \RR$, we get that 
\be\label{mmanndnndnnpaspapspkke}\| p\|_{2,\infty}^{2} + \sum_{j=1}^{d} \| q^{j}\|_{2,2}^{2} \leq   \kappa_{2}  \EE\left(\left[\int_{0}^{T}\left( \frac{|x_0|}{T} + |e|\right) \dd t\right]^{2}+ \int_{0}^{T} \sum_{j=1}^{d} |f^{j}|^{2} \dd t\right),\ee
where $\kappa_2$ depends continuously on $c$ and $\kappa_1$ only, and so the r.h.s. is clearly a continuous function of the model parameters.  On the other hand,   by \eqref{qmmqqqqqenqnsada} we have that 
$$  \delta \| u \|_{2,2}^{2} \leq   p(0)^{\top}x_0 + \EE\left ( \int_0^T \left|p^{\top}e+\sum_{j=1}^{d} (q^{j})^{\top}f^j\right|\dd t\right). $$
Using \eqref{mmanndnndnnpaspapspkke} we obtain that $ \| u \|_{2,2}^{2}$ is bounded by a continuous function of $P$. Therefore,  from the first equation in \eqref{menqnoooasqwqwqw} we get that $ \| x \|_{2,\infty}^{2}$ is bounded by a continuous function of $P$. Thus, noting that 
$$\ba{l}   x_{1}[P]= A x[P] + B u[P]+ e \hspace{0.3cm} \mbox{and } \; \;   x_{2}^{j}[P]= C^j(t) x[P](t)+D^j(t) u[P](t)+f^j(t),  \\[4pt]
p_{1}[P]= -[A(t)^{\top}p[P](t) + \sum_{j=1}^d C^j(t)^{\top}q[P]^{j}(t) + Q(t)x[P](t) ] \hspace{0.3cm} \mbox{and } \; \; p_{2}^{j}[P]= q[P]^{j},\\[4pt]
p_0[P]= \EE\left(M x[P](T)- \int_{0}^{T} p_{1}[P](t) \dd t\right),
\ea$$ 
we obtain that $\| x[P] \|_{\I}^{2}+ \|p[P]\|_{\I}^{2}$ is bounded by a continuous function of $P$. The result follows.
\end{proof} \smallskip

We   prove now a stability result for the solutions of $(P_{3,P})$ in terms of $P$. More precisely, let  $P^{k}$ and $P\in \P_{3}$ be such that $P^{k} \to P$ as $k\to \infty$.
We have the following stability result for  $(x^{k}, u^{k},p^{k},q^{k}):= (x[P^{k}], u[P^{k}], p[P^{k}], q[P^{k}])$.
\begin{proposition}
\label{convergencia} Suppose that  \textbf{(H5)} holds true. Then, as $k\uparrow \infty$,  we have that $v(P^{k}) \to v(P)$ and $(x^{k}, u^{k},p^{k},q^{k})$ converges strongly in $\ito \times \control \times \ito \times (L^{2,2}_{\FF})^{n\times d}$ to $(\bar{x}, \bar{u}, \bar{p}, \bar{q}):=  (x[P], u[P], p[P], q[P])$.
\end{proposition}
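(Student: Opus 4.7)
The plan is to exploit the uniform estimate from Lemma \ref{PontTang}, extract weak limits via compactness, pass to the limit in the coupled forward-backward optimality system of \eqref{qmmeqmenqneaeweqeqqqsa} using Proposition \ref{qmmqemqmndndndndaaa}, and then upgrade weak to strong convergence using coercivity of $N$ and the structure of the quadratic cost.

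First, since $P^k \to P$ in $\mathcal{P}_3$ and $\beta$ is continuous, Lemma \ref{PontTang} provides a uniform bound
$$\|x^k\|_{\I}^2 + \|u^k\|_{2,2}^2 + \|p^k\|_{\I}^2 + \sum_{j=1}^{d}\|(q^j)^k\|_{2,2}^2 \leq \sup_k \beta(P^k) < \infty.$$
Hence every subsequence admits a further subsequence (not relabelled) such that $(x^k,u^k,p^k,q^k)$ converges weakly in $\I^n \times \control \times \I^n \times (L^{2,2}_{\FF})^{n\times d}$ to some tuple $(\tilde x,\tilde u,\tilde p,\tilde q)$. The feedback form of $u^k$ in \eqref{qmmeqmenqneaeweqeqqqsa}, combined with the strong convergence $(B^k,D^{j,k}) \to (B,D^j)$ in $L^{\infty,\infty}_{\FF}$ and the fact that $N$ is bounded with $N \succeq \delta I$, yields that the feedback identity passes to the weak limit. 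Applying Proposition \ref{qmmqemqmndndndndaaa}(ii) to the state equation and Proposition \ref{qmmqemqmndndndndaaa}(iii) to the adjoint equation then shows that $(\tilde x,\tilde u,\tilde p,\tilde q)$ satisfies the system \eqref{qmmeqmenqneaeweqeqqqsa} for the parameter $P$. By uniqueness of the optimal tuple (\cite[Theorem 3.2]{Bismut76a} or Theorem \ref{sensibilidad1}), the limit is necessarily $(\bar x,\bar u,\bar p,\bar q)$, and a subsequence principle implies that the full sequence converges weakly.

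For $v(P^k) \to v(P)$, I would apply It\^o's formula (equivalently Lemma \ref{usoito}) to $p^k{}^\top x^k$, which yields identity \eqref{qmmndnndndnndndnaasasada} with parameters $P^k$; both sides equal $2F(x^k,u^k) = 2v(P^k)$ thanks to the optimality feedback. The right-hand side is a pairing between the weakly convergent $(p^k(0),p^k,q^k)$ and the strongly convergent $(x_0^k, e^k, f^{j,k})$, hence it passes to the limit and the corresponding identity for $P$ gives exactly $2v(P)$.

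The decisive step, which I expect to be the main technical obstacle, is upgrading the weak convergence of $u^k$ to strong convergence. I would proceed by quadratic-form arguments: combining the convergence of $2v(P^k) \to 2v(P)$ with the lower semicontinuity of the three non-negative quadratic forms
$$\EE\bigl[x^k(T)^\top M x^k(T)\bigr],\quad \EE\int_0^T x^k{}^\top Q x^k\,\dd t,\quad \EE\int_0^T u^k{}^\top N u^k\,\dd t$$
under weak convergence, each $\liminf$ must equal the corresponding limit, so in particular $\EE\int_0^T u^k{}^\top N u^k\,\dd t \to \EE\int_0^T \bar u^\top N \bar u\,\dd t$. Expanding $\|u^k - \bar u\|_N^2$ in the weighted inner product and using $N \succeq \delta I$ then gives $u^k \to \bar u$ strongly in $\control$. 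Once $u^k$ is strongly convergent, classical stability for linear SDEs (as invoked in the proof of Lemma \ref{PontTang}) yields $x^k \to \bar x$ strongly in $(L^{2,\infty}_{\FF})^n$; plugging these back into the expressions for $(x^k_1,x^k_2)$ gives strong convergence in $\I^n$. Finally, strong convergence of $(p^k,q^k)$ follows from classical stability of linear BSDEs applied with the now strongly convergent driver $Q(t)x^k(t)$ and terminal condition $M x^k(T)$.
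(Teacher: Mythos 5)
Your proposal is correct in substance, and the second half (weak-limit extraction via Lemma \ref{PontTang}, identification of the limit through Proposition \ref{qmmqemqmndndndndaaa}, sufficiency/uniqueness to conclude the limit is $(\bar x,\bar u,\bar p,\bar q)$, then the upgrade to strong convergence of $u^k$ via lower semicontinuity of the three quadratic terms and the $N\succeq\delta I$ coercivity, followed by linear SDE/BSDE stability for $x^k$ and $(p^k,q^k)$) is essentially the paper's argument. Where you genuinely differ is the proof that $v(P^k)\to v(P)$: the paper establishes this first and independently of any weak convergence, by building explicit feasible competitors --- running the nominal optimal control $\bar u$ under the perturbed dynamics to get $\limsup(v(P^k)-v(P))\le 0$, and running $u^k$ under the nominal dynamics to get the reverse inequality --- using only the a priori bound on $u^k$ and the linear SDE estimate \eqref{menqnoooasqwqwqw}; you instead obtain it a posteriori from the duality identity \eqref{qmmndnndndnndndnaasasada} (It\^o applied to $(p^k)^\top x^k$ along the optimality system \eqref{qmmeqmenqneaeweqeqqqsa}), passing to the limit in the weak--strong pairing of $(p^k(0),p^k,q^k)$ against $(x_0^k,e^k,f^{j,k})$. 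Your route is slicker and exploits the LQ structure more heavily; the paper's competitor construction is more elementary and does not require the adjoint system at all for this step, but both are valid and there is no circularity in your ordering since the weak-limit identification never uses value convergence. Two small points to tighten: the conclusion that the weak limit of the system \eqref{qmmeqmenqneaeweqeqqqsa} must be the optimal tuple rests on the sufficiency of the Pontryagin system for convex problems (Corollary \ref{memmndnaaaaasssasasasasasasasa}) combined with uniqueness of the solution and of the multiplier, not merely on ``uniqueness of the optimal tuple''; and in the final BSDE stability step the coefficients $A^k,(C^j)^k$ also vary with $k$, so one should write the equation for $p^k-\bar p$ (as the paper does) and absorb the terms $(A^k-A)^\top p^k$, $((C^j)^k-C^j)^\top (q^j)^k$ using the $L^{\infty,\infty}$ convergence of the coefficients and the boundedness of $(p^k,q^k)$ --- a routine fix that your ``classical stability'' invocation glosses over.
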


\begin{proof} Let us first prove the convergence of the value functions.  Define $\hat{x}^{k}$  as the solution of the following SDE: \small
$$\ba{rcl}\dd  \hat{x}^{k}(t)&=& [A^{k}(t) \hat{x}^{k}(t)+B^{k}(t) \bar{u}(t)+e^{k}(t)]\dd t\\[4pt]
\; & \; &+ \sum\limits_{j=1}^d [(C^{j})^{k}(t) \hat{x}^{k}(t)+(D^{j})^{k}(t) \bar{u}(t)+(f^{j})^{k}(t)]\dd W^j(t),\\[4pt]
\hat{x}^{k}(0) &=& x_{0}^{k}.\ea
$$
\normalsize
By definition, $(\hat{x}^{k},\bar{u}) \in F(P_{3,P^{k}})$ and by the first estimate in \eqref{menqnoooasqwqwqw} we have $\hat{x}^{k}$ is bounded in $(L^{2,\infty}_{\FF})^{n}$, uniformly in $k$. Now,   $ \hat{z}^{k}: =\hat{x}^{k} - \bar{x}\in \ito$  satisfies 
$$\ba{rcl}\dd  \hat{z}^{k}(t)&=& [A(t) \hat{z}^{k}(t)+\delta^{k} A \hat{x}^{k}+  \delta^{k}B (t) \bar{u}(t)+\delta^{k}e(t)]\dd t\\[4pt]
\; & \; &+ \sum\limits_{j=1}^d [C^{j}(t)\hat{z}^{k}(t) +\delta^{k} C^{j}(t) \hat{x}^{k}+\delta^{k}D^{j}(t) \bar{u}(t)+\delta^{k} f^{j}(t)]\dd W^j(t)\\[4pt]
\hat{z}^{k}(0) &=& \delta^{k}x_{0},\ea
$$
where $\delta^{k} A:= A^{k}-A$,  $\delta^{k} B:= B^{k}-B$ and  $\delta^{k}e:= e^{k}- e$ with an analogous definition for   $\delta^{k}x_0,\delta^{k} C^{j}$, $\delta^{k}D^{j}$,  $\delta^{k} f^{j}$. By the convergence $P^{k}\to P$, the boundedness of $\hat{x}^{k}$  in $(L^{2,\infty}_{\FF})^{n}$ and classical bounds for linear SDEs  (see e.g. \cite[Chapter 6, Section 4]{YongZhou}), we get that $ \hat{z}^{k}\to 0$ in $(L^{2,\infty}_{\FF})^{n}$. This, implies that $| F(\hat{x}^{k}, \bar{u}) -F(\bar{x}, \bar{u})|$ tends to zero as $k\uparrow \infty$.  Therefore, we get
$$ v(P^{k}) \leq F(\hat{x}^{k},\bar{u}) =  F(\bar{x},\bar{u}) +o(1) = v(P)+o(1),$$
which implies that  $ \limsup_{k\uparrow \infty}    [v(P^{k})-  v(P)] \leq 0$. Analogously, if $\tilde{x}^{k}$ is the solution of 
$$\ba{rcl}\dd  \tilde{x}^{k}(t)&=& [A(t) \tilde{x}^{k}(t)+B(t) u^{k}(t)+e(t)]\dd t\\[4pt]
\; & \; &+ \sum\limits_{j=1}^d [C^{j}(t) \tilde{x}^{k}(t)+D^{j}(t)  u^{k}(t)+f^{j}(t)]\dd W^j(t),\\[4pt]
\tilde{x}^{k}(0) &=& x_{0},\ea
$$
we have that $(\tilde{x}^{k},u^{k}) \in F(P_{3,P})$. In addition,  $\tilde{z}^{k}:=  x^{k}-\tilde{x}^{k}$ satisfies
 $$\ba{rcl}\dd  \tilde{z}^{k}(t)&=& [A^{k}(t) \tilde{z}^{k}(t)+\delta^{k} A \tilde{x}^{k}+  \delta^{k}B (t) u^{k}(t)+\delta^{k}e(t)]\dd t\\[4pt]
\; & \; &+ \sum\limits_{j=1}^d [C^{j}(t)\tilde{z}^{k}(t) +\delta^{k} C^{j}(t) \tilde{x}^{k}+\delta^{k}D^{j}(t)u^{k}(t)+\delta^{k} f^{j}(t)]\dd W^j(t),\\[4pt]
\tilde{z}^{k}(0) &=& \delta^{k}x_{0}.\ea
$$ 
By Lemma \ref{PontTang} we see that $u^k$ is bounded in $(L^{2,2})^m$. So as before since $P_{k}\to P$ we get that $\tilde{x}^{k}$ is bounded in $(L^{2,\infty}_{\FF})^{n}$, and similarly obtain that $ \tilde{z}^{k} \to 0$ in $(L^{2,\infty}_{\FF})^{n}$ and so 
$| F(\tilde{x}^{k}, u^{k}) -F(x^{k}, u^{k})|\to 0$. Thus, we obtain
$$ v(P) \leq F(\tilde{x}^{k}, u^{k}) =  F(x^{k}, u^{k}) +o(1) = v(P^{k})+o(1),$$
which implies that  $ \liminf_{k\uparrow \infty}    [v(P^{k})-  v(P)] \geq 0$, proving the convergence of the value functions.  On the other hand, since $P^{k}$ converges to $P$,   Lemma \ref{PontTang}  implies the existence of $(\hat{x}, \hat{u}, \hat{p}, \hat{q})$ such that, up to some subsequence,  $(x^{k}, u^{k},p^{k},q^{k})$ converges weakly in $\ito \times \control \times \ito \times (L^{2,2}_{\FF})^{n\times d}$ to $(\hat{x}, \hat{u}, \hat{p}, \hat{q})$. By Proposition \ref{qmmqemqmndndndndaaa}, we easily get that $(\hat{x}, \hat{u}, \hat{p}, \hat{q})$ satisfies \eqref{qmmeqmenqneaeweqeqqqsa}.  By Corollary \ref{memmndnaaaaasssasasasasasasasa}, we have that $(\hat{x}, \hat{u})$ is a solution of $(P_{3,P})$, which by uniqueness implies that $(\hat{x}, \hat{u}) = (\bar{x}, \bar{u})$ and so $(\hat{p}, \hat{q}) = (\bar{p},\bar{q})$. On the other hand, using the elementary fact that for every sequences $a_k$, $b_k$ of real numbers such that $a^{k}+ b^{k} \to a+ b$ and $a \leq \liminf a^{k}$, $b\leq \liminf b^{k}$ we have  that $a^{k} \to a$ and $b^{k}\to b$, we get, by the lower semicontinuity of the three terms appearing in $F$,  that $\EE\left[\int_0^T (u^ k)^{\top}Nu^k\right]\to\EE\left[\int_0^T u^{\top}Nu\right] $ and so by expanding $\EE\left[\int_0^T (u^ k-u)^{\top}N(u^k-u)\right]$ and \textbf{(H5)} we conclude that $\|u^{k}\|_{2,2}\to \|u\|_{2.2}$. Therefore $u^{k}\to \bar{u}$ strongly in $\control$. Setting $z^{k}:= x^{k}- \bar{x}$ and $v^{k}= u^{k}-\bar{u}$, we have
 $$\ba{rcl}\dd z^{k}(t)&=& [A(t) z^{k}(t)+\delta^{k} A x^{k}+ B(t) v^{k}+ \delta^{k}B (t) u^{k}(t)+\delta^{k}e(t)]\dd t\\[4pt]
\; & \; &+ \sum\limits_{j=1}^d [C^{j}(t)z^{k}(t) +\delta^{k} C^{j}(t) x^{k}+D^{j}(t)v^{k}+\delta^{k}D^{j}(t)u^{k}(t)+\delta^{k} f^{j}(t)]\dd W^j(t),\\[4pt]
z^{k}(0) &=& \delta^{k}x_{0}.\ea
$$ 
 Since $v^{k}\to 0$ in $\control$, using the first estimate of \eqref{menqnoooasqwqwqw} and the fact that $(x^{k}, u^{k})$ is bounded in $\ito\times \control$, we obtain that $x^{k}\to x$ strongly in $(L^{2,\infty}_{\FF})^{n}$ and consequently, passing to the $(L^{2,2}_{\FF})^{n}$ limit in $x^{k}_{1}$ and $x_{2}^{k}$, also in $\ito$. Finally,  setting $\hat{p}^{k}:= p^{k}-\bar{p}$ and $\hat{q}^{k}:= q^{k}-\bar{q}$, we have that  \small
$$\ba{ll}
\dd \hat{p}^{k}(t) =&-[A(t)^{\top}\hat{p}^{k}(t) +\delta^{k}A(t) p^{k}(t)+ \sum_{j=1}^d [C^j(t)^{\top}(\hat{q})^{j}(t)+\delta^{k}C^j(t)^{\top}(q^{j})^{k}(t)]  + Q(t)z^{k}(t) ]\dd t \\[4pt]
\; & +\sum_{j=1}^d (\hat{q}^{j})^{k}(t) \dd W^j(t),\\[4pt]
 \hat{p}^{k}(T)=&Mz^{k}(T).\ea$$ \normalsize
Then,   applying the classical estimates for linear BSDEs (see e.g. \cite[ Chapter 7, Theorem 2.2]{YongZhou}) and using that $z^{k}(T)\to 0$ strongly in $(L^{2}_{\F_{T}})^{n}$, and that $(p^{k},q^{k})$  remain bounded in $\ito \times (L^{2,2}_{\FF})^{n\times d}$, we get that $(\hat{p}^{k}, \hat{q}^{k}) \to (0,0)$ strongly in $ (L^{2,2}_{\FF})^{n}\times (L^{2,2}_{\FF})^{n\times d}$. By passing to the limit in $\hat{p}^{k}_{1}$ and $\hat{p}^{k}_{2}$ we obtain the desired result.
\end{proof}
\vspace{0.3cm}

Define now the value function $v:\mathcal{P}_3\mapsto \RR$ of the $(P_{3,P})$ as a function of the parameters. Note that, under {\bf(H5)}, $v$ is well defined.  With the previous proposition, we can prove the following sensitivity result:
\begin{theorem}\label{teoremalq} Suppose that  {\bf(H5)} holds. Then,  $v$ is of class $C^{1}$. Moreover,  at any $P=(x_0,A,B,\{C^j\},\{D^j\},e,\{f^j\})\in\mathcal{P}_3$ the   directional derivative is given by  \small
\be\label{expresionderdireccionalcasolq}
\ba{lll}
Dv(P; \Delta P) &=& 
\bar{p}(0)\Delta x_0 +\EE\left(\int_0^T \bar{p}(t)^{\top}\left[\Delta A(t) \bar{x}(t) + \Delta B(t) \bar{u}(t) +\Delta e(t) \right]\dd t\right ) \\[4pt]
\; &\; & +\EE\left(\int_0^T  \sum_{j=1}^{d} \bar{q}^j(t)^{\top} \left[\Delta C^j(t)\bar{x}(t)+\Delta D^j(t)\bar{u}(t)+\Delta f^j(t)\right]  \dd t\right),
\ea
\ee\normalsize
where $\Delta P:=(\Delta x_0,\Delta A, \Delta B,\{ \Delta C^j\},\{\Delta D^j\},\Delta e,\{\Delta f^j\})$ and $(\bar{x}, \bar{u}, \bar{p}, \bar{q})= (x[P], u[P], p[P], q[P])$.
\end{theorem}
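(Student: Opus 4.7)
The plan is to derive \eqref{expresionderdireccionalcasolq} via a Lagrangian sandwich argument that exploits the convexity of $(P_{3,P})$ and the fact that $G$ is \emph{affine} in the parameter $P$ for fixed $(x,u)$, and then to upgrade from directional to Fréchet $C^{1}$ regularity using the strong continuity of the optimal quadruple provided by Proposition \ref{convergencia}. Writing $\L(x,u,\lambda,P) := F(x,u) + \langle \lambda, G(x,u,P)\rangle_{\I}$ and setting
\begin{equation*}
\bar\lambda[P](\cdot) := \bar p[P](0) + \int_{0}^{\cdot}\bar p[P](s)\,\dd s + \int_{0}^{\cdot}\bar q[P](s)\,\dd W(s)\in\ito,
\end{equation*}
Theorem \ref{Teoidentificacion} combined with Corollary \ref{memmndnaaaaasssasasasasasasasa} identifies $\bar\lambda[P]$ as the unique Lagrange multiplier at $(\bar x[P],\bar u[P])$, so that $(x,u)\mapsto\L(x,u,\bar\lambda[P],P)$ is convex and attains its minimum $v(P)$ at $(\bar x[P],\bar u[P])$.

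The key structural observation is that, for fixed $(x,u)$, the map $P \mapsto G(x,u,P)$ is affine, so with $\Delta P := P' - P$ one has $G(x,u,P') - G(x,u,P) = \Gamma(x,u,\Delta P) \in \ito$, where
\begin{equation*}
\Gamma(x,u,\Delta P)(\cdot) := \Delta x_{0} + \int_{0}^{\cdot}\!\bigl[\Delta A\, x + \Delta B\, u + \Delta e\bigr]\dd t + \sum_{j=1}^{d}\int_{0}^{\cdot}\!\bigl[\Delta C^{j} x + \Delta D^{j} u + \Delta f^{j}\bigr]\dd W^{j}.
\end{equation*}
Combining this identity with the minimizing property above, together with the equalities $v(P') = \L(\bar x[P'],\bar u[P'],\bar\lambda[P],P')$ and $v(P) = \L(\bar x[P],\bar u[P],\bar\lambda[P],P)$ (which use $G(\bar x[P'],\bar u[P'],P')=0$ and $G(\bar x[P],\bar u[P],P)=0$ respectively), I obtain the sandwich
\begin{equation*}
\langle \bar\lambda[P], \Gamma(\bar x[P'], \bar u[P'], \Delta P)\rangle_{\I} \leq v(P') - v(P) \leq \langle \bar\lambda[P'], \Gamma(\bar x[P], \bar u[P], \Delta P)\rangle_{\I},
\end{equation*}
the upper estimate being derived from the lower one by swapping $P \leftrightarrow P'$ and using the linearity of $\Gamma$ in its third argument.

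Fixing a direction $\delta P \in \mathcal{P}_{3}$, setting $P' := P + \tau\delta P$, dividing the sandwich by $\tau > 0$ and letting $\tau \downarrow 0$, Proposition \ref{convergencia} guarantees that $(\bar x[P'], \bar u[P'], \bar p[P'], \bar q[P'])$ converges strongly to $(\bar x[P], \bar u[P], \bar p[P], \bar q[P])$ in $\ito \times \control \times \ito \times (L^{2,2}_{\FF})^{n\times d}$, and hence $\bar\lambda[P']\to\bar\lambda[P]$ strongly in $\ito$; together with the essential boundedness of $A,B,C^{j},D^{j}$ this forces both sides of the sandwich to converge to the common limit $\langle \bar\lambda[P], \Gamma(\bar x[P], \bar u[P], \delta P)\rangle_{\I}$, which, expanded through the definition of $\langle\cdot,\cdot\rangle_{\I}$, yields precisely \eqref{expresionderdireccionalcasolq}. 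Finally, since this directional derivative is linear in $\delta P$ and the map $P\mapsto Dv(P;\cdot)\in\mathcal{P}_{3}^{*}$ is continuous by a further application of Proposition \ref{convergencia}, continuous Gâteaux differentiability upgrades $v$ to a $C^{1}$ function. The main obstacle is precisely the limit passage in the sandwich: weak continuity of the adjoint pair would not suffice to pair it against the parameter-perturbed quantities (whose own convergence might only be weak), so the strong convergence supplied by Proposition \ref{convergencia} is essential.
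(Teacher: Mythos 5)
Your proposal is correct, but it follows a genuinely different route from the paper. The paper's proof is a citation-based argument: it combines the joint $C^{1}$ differentiability and surjectivity of the constraint map established in Lemma \ref{regularidadLQ} (which serves as the Robinson-type constraint qualification), the strong stability of Proposition \ref{convergencia}, the multiplier identification of Theorem \ref{Teoidentificacion}, and then invokes the abstract sensitivity theorem for parameterized optimization problems in Banach spaces (\cite[Theorem 4.24]{BonSha}) to obtain Hadamard directional differentiability and the formula \eqref{expresionderdireccionalcasolq}; the $C^{1}$ property is then obtained, exactly as you do, from the continuity of $P\mapsto Dv(P;\cdot)$ via Proposition \ref{convergencia}. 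You instead bypass the abstract theorem and Lemma \ref{regularidadLQ} entirely, exploiting two structural facts: the convexity of the Lagrangian in $(x,u)$ (so that, by Theorem \ref{Teoidentificacion} and Corollary \ref{memmndnaaaaasssasasasasasasasa}, the multiplier $\bar\lambda[P]$ turns $(\bar x[P],\bar u[P])$ into a global minimizer of $\L(\cdot,\cdot,\bar\lambda[P],P)$ with value $v(P)$), and the affine dependence of $G(x,u,\cdot)$ on the parameter, which yields the two-sided estimate
$\langle \bar\lambda[P], \Gamma(\bar x[P'],\bar u[P'],\Delta P)\rangle_{\I}\le v(P')-v(P)\le \langle \bar\lambda[P'], \Gamma(\bar x[P],\bar u[P],\Delta P)\rangle_{\I}$, after which Proposition \ref{convergencia} lets you pass to the limit along $P'=P+\tau\delta P$. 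This sandwich argument is more elementary and self-contained (no appeal to the external perturbation theory), at the price of using the affine-in-$P$ structure, which is available here but would not extend verbatim to nonlinearly parameterized constraints where the paper's abstract route still applies. One small inaccuracy in your commentary: in the limit passage each pairing has one \emph{fixed} factor ($\bar\lambda[P]$ on the left, $\Gamma(\bar x[P],\bar u[P],\delta P)$ on the right), so weak convergence of the perturbed quantities would already suffice for the directional-derivative formula; the strong convergence of Proposition \ref{convergencia} is genuinely needed only in the final step, to get convergence of $Dv(P^{k};\cdot)$ in the dual norm (uniformly over unit directions) and hence the $C^{1}$ conclusion. This does not affect the validity of your proof.
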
 

\begin{proof}  The   Hadamard differentiability property for $v$  and expression \eqref{expresionderdireccionalcasolq} follow from  the surjectivity result in Lemma \ref{regularidadLQ}, the strong stability of the solutions proved in Proposition \ref{convergencia}, the identification of the Lagrange multipliers with the weak-Pontryagin multipliers proved in  Theorem \ref{Teoidentificacion}   and   \cite[Theorem 4.24]{BonSha},  dealing with   sensitivity results for the optimal value in  optimization problems in Banach spaces. Moreover, using again Proposition  \ref{convergencia} and expression \eqref{expresionderdireccionalcasolq} we easily check that $Dv(\cdot):  {\P}_{3}  \to  L(\P_3, \RR)$ is continuous, which implies the $C^{1}$ property.
\end{proof}
\begin{remark}\label{remarkutil} {\rm(i)} Note that if the nominal problem is deterministic then 
$$
Dv(P; \Delta P) =
\bar{p}(0)\Delta x_0 +\int_0^T \bar{p}(t)^{\top}\left[\EE\left(\Delta A(t)\right) \bar{x}(t) +\EE\left( \Delta B(t)\right ) \bar{u}(t) + \EE[\Delta e(t)] \right]\dd t 
$$
Therefore, the first order term of $v(P+\Delta P)-v(P)$ can be computed with the help of a deterministic differential  Riccati equation.  This could be useful in practice, since it provides a first order approximation for the value $v(P+\Delta P)$ of the stochastic LQ problem, whose solution is typically characterized in terms of Riccati backward stochastic differential equations, which are more difficult to solve than their deterministic counterpart.  \smallskip\\
{\rm(ii)} It could be interesting to study the extension of the above result for the case of indefinite control weight costs, i.e. when $N$ is not necessarily definite positive {\rm(}see {\rm \cite{ChenLiZhou98}, \cite[Chapter 6]{YongZhou}} and references therein{\rm)}.
\end{remark}

\subsection{Mean-Variance Portfolio Selection}\label{sens3}

Suppose that a market consists of $d+1$ assets  $S^{0}, S^{1}, \hdots, S^{d}$   whose prices are defined by
\be\label{mwemwmdnndndnsdsdnssssssaaaa}
\ba{rcl}
\dd S^{0}(t)&=& r S^{0}(t), \; \; \mbox{for } t \in [0,T], \; \; S^{0}(0)=1, \\[4pt]
dS(t)&=&\mbox{diag}(S(t))\mu(t)\dd t+\mbox{diag}(S(t))\sigma(t)\dd W(t) \; \; \mbox{for } \; t \in [0,T], \; S(0)= S_{0} \in \RR^{d},\\[4pt]
\ea\ee
where  $S:=(S^{1}, \hdots, S^{d})$ and for $a\in \RR^{d}$ the matrix $\mbox{diag}(a) \in \RR^{d\times d}$ is defined as $\mbox{diag}(a)^{ij}=\delta^{ij} ai$ for all $i,j \in \{1, \hdots, d\}$ ($\delta_{ij} $ is the Kronecker symbol). The precise properties on the processes $r\in L^{\infty}([0,T]; \RR)$,  $\mu \in (L^{\infty, \infty}_{\FF})^{d}$ and $\sigma \in (L^{\infty, \infty}_{\FF})^{d\times d}$ shall be given shortly and  will imply that  the financial market  is arbitrage-free and complete (see e.g. \cite[Chapter 1,Theorem 4.2 and 6.6]{KaraShreveFinance}). 

Given an initial wealth $x\in \RR$ and a  {\it self-financing portfolio} $\pi \in (L^{2,2}_{\FF})^{d}$ measured in units of wealth, the associated {\it wealth process} $X$ is defined through the SDE:
\be\label{ecXr}\ba{rcl}
\dd X(t)&=& \{ r(t)X(t)+\pi(t)^{\top}(\mu(t)-r(t){\bf 1}) \} \dd t+\pi(t)^{\top} \sigma(t) \dd W(t) \hspace{0.2cm} \mbox{for all } \; t \in [0,T],\\[4pt]
X(0)&=&x. \ea
\ee
where ${\bf 1}$ denotes the vector of ones in $\RR^{d}$.
For $A \in \RR$ we consider the problem (see e.g. \cite{duffie1991,ZhouLi00,Oks04}):
$$ 
\inf\limits_{(X,\pi) \in \I^{1} \times (L^{2,2}_{\FF})^{d}} \EE\left( \left[X-A\right]^{2} \right), \; \; 
\mbox{such that   \eqref{ecXr} is verified and } \; \; \EE\left( X(T) \right)=A.  \eqno(MVP)
$$
We then see then that the aim is  to minimize the risk (variance) subject to a guaranteed mean-return at the final time $T$.

We intend to compute the sensitivities of this problem with respect to its parameters. We thus define as \textit{parameter space} $\mathcal{P}_4:=\RR\times L^{\infty}([0,T])\times\RR\times (L^{\infty,\infty}_{\FF})^{d} \times (L^{\infty,\infty}_{\FF})^{d\times d}$. We will further say that $P=(x,r,A,\mu,\sigma)$ belongs to $\hat{\P}_4$ if $P\in \P_4$, $\sigma\sigma^{\top}\succeq \delta I_{d\times d}$ for some $\delta >0$, and
\be\label{mwrmwmnndndndsss}
\sum_{i=1}^{d} \left | \EE\left(\int_0^T [\mu_i(t) -r(t)]\dd t\right)\right	| >0.\ee
Note that $\hat{\P}_4$ is an open subset of $ \P_4$. Let us call $v(P):=\mbox{\textit{ value of }}(MVP)$, the corresponding optimal value function (as a function of the model parameters). On a first step we prove some estimates relating the norms of the portfolio and wealth.  As in the LQ-case, we compute the constants rather explicitly to show that they will not explode when we vary the model parameters.  

\begin{lemma}
\label{cotas} If $P=(x,r,A,\mu,\sigma)\in \hat{\P}_4$ and  $X$ satisfies \eqref{ecXr}, then \small
\begin{eqnarray}
\|\pi\|_{2,2}^{2}  &\leq & \frac{2}{\delta} \EE\left[ X(T)^2 \right]\left( 1+ 2 T\left ( \| r \|_{\infty} + \| \sigma^{-1} \{\mu-r{\bf 1}\}\|_{\infty}^{2} \right ) e^{2\left ( \| r \|_{\infty} + \| \sigma^{-1} \{\mu-r{\bf 1}\}\|_{\infty}^{2} \right )T}\right), 
\label{mqemqmemqmeqssssaa}\\
\| X \|_{ 2,2}^2 &\leq & T\EE\left( |X(T)|^{2}\right)e^{2 \left ( \| r \|_{\infty} + \| \sigma^{-1} \{\mu-r{\bf 1}\}\|_{\infty}^{2} \right ) T}.
\label{estimX}
\end{eqnarray}
\normalsize
\end{lemma}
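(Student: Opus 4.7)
The plan is to apply It\^o's formula to $X(t)^2$ and extract both bounds from the resulting energy identity, via a Gronwall argument for \eqref{estimX} and a weighted absorption for \eqref{mqemqmemqmeqssssaa}. Introduce the market price of risk $\alpha := \sigma^{-1}(\mu - r{\bf 1})$, which lies in $(L^{\infty,\infty}_{\FF})^d$ since $\sigma\sigma^\top \succeq \delta I$ and $\mu, r$ are essentially bounded, and set $\xi := \sigma^\top \pi \in (L^{2,2}_{\FF})^d$. The dynamics \eqref{ecXr} then read $\dd X = (rX + \xi^\top \alpha)\dd t + \xi^\top \dd W$, and It\^o's formula yields
\begin{equation}\label{itoprop}
\dd X(t)^2 = \bigl[2r(t)X(t)^2 + 2X(t)\xi(t)^\top \alpha(t) + |\xi(t)|^2 \bigr]\dd t + 2X(t)\xi(t)^\top \dd W(t).
\end{equation}
Since $X \in \I^1 \subset (L^{2,\infty}_{\FF})^1$ by Lemma \ref{meqwnenenqneqbebqbs} and each scalar component of $\xi$ belongs to $L^{2,2}_{\FF}$, Lemma \ref{memmnnfbbbbbrbrwww} guarantees that the stochastic integral in \eqref{itoprop} is a true martingale. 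Taking expectation produces the balance identity
\begin{equation}\label{balanceprop}
\EE[X(t)^2] = x^2 + \EE\int_0^t \bigl[2rX^2 + 2X\xi^\top \alpha + |\xi|^2 \bigr]\dd s, \qquad t \in [0,T].
\end{equation}

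For \eqref{estimX}, I would complete the square: $|\xi|^2 + 2X\xi^\top\alpha = |\xi + X\alpha|^2 - X^2|\alpha|^2 \geq -\|\alpha\|_\infty^2 X^2$. Differentiating \eqref{balanceprop} in $t$ and setting $C := \|r\|_\infty + \|\alpha\|_\infty^2$, this gives the pointwise differential inequality $\tfrac{d}{dt}\EE[X(t)^2] \geq -2C \EE[X(t)^2]$. Equivalently, $t \mapsto e^{2Ct}\EE[X(t)^2]$ is nondecreasing, so for $t \leq T$ one has $\EE[X(t)^2] \leq \EE[X(T)^2]e^{2C(T-t)} \leq \EE[X(T)^2]e^{2CT}$, and integrating over $[0,T]$ produces \eqref{estimX}.

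For \eqref{mqemqmemqmeqssssaa}, I would take $t = T$ in \eqref{balanceprop} and rearrange to isolate $\EE\int_0^T |\xi|^2 \dd s$. Controlling the cross-term by the weighted Young's inequality $2|X\xi^\top\alpha| \leq \tfrac{1}{2}|\xi|^2 + 2\|\alpha\|_\infty^2 X^2$ and using $-2rX^2 \leq 2\|r\|_\infty X^2$, the term $\tfrac{1}{2}\EE\int_0^T |\xi|^2 \dd s$ can be absorbed on the left to yield
$$\EE\int_0^T |\xi|^2 \dd s \leq 2\EE[X(T)^2] + 4C \|X\|_{2,2}^2.$$
Combining with \eqref{estimX} and using the ellipticity $|\xi|^2 = \pi^\top \sigma\sigma^\top\pi \geq \delta|\pi|^2$ on the left-hand side gives precisely \eqref{mqemqmemqmeqssssaa}.

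The main obstacle is the indefinite cross-term $2X\xi^\top \alpha$: a naive Cauchy-Schwarz bound either recouples $\|\xi\|_{2,2}^2$ to itself without slack (blocking the absorption) or produces constants depending on $T$ in a form incompatible with the statement. The completing-the-square identity used for \eqref{estimX} and the specifically-weighted (coefficient $\tfrac{1}{2}$) Young's inequality used for \eqref{mqemqmemqmeqssssaa} are what allow the explicit constants $2/\delta$ and $e^{2CT}$ advertised in the statement to emerge cleanly.
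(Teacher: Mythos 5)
Your proof is correct and follows essentially the same route as the paper: an It\^o energy identity for $X^2$ (with Lemma \ref{memmnnfbbbbbrbrwww} justifying the martingale term), a weighted Young/absorption step with coefficient $\tfrac12$ on $|\sigma^\top\pi|^2$, a backward Gronwall-type argument giving $\EE[X(t)^2]\leq \EE[X(T)^2]e^{2CT}$ with $C=\|r\|_\infty+\|\sigma^{-1}(\mu-r{\bf 1})\|_\infty^2$, and finally the ellipticity $\pi^\top\sigma\sigma^\top\pi\geq\delta|\pi|^2$, yielding exactly the stated constants. The only cosmetic differences are that you write the It\^o identity forward from $0$ and complete the square for \eqref{estimX}, whereas the paper integrates backward from $T$ and uses Young's inequality directly; the resulting inequalities coincide.
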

\begin{proof}  By classical results on SDEs (e.g.\cite[Theorem 2.1]{Bismut76a}) we have that $X\in L_{\FF}^{2,\infty}$. Let us set $Z= \sigma^{\top} \pi$. We have that 
$$X(t)= x+ \int_{0}^{t} \left [ r(s)X(s)+Z^{\top} \sigma^{-1}(s) \{\mu(s)-r(s){\bf 1}\}\right ] \dd s + \int_{0}^{t} Z^{\top} \dd W(s).$$
By Itô's formula we have that 
$$ |X(t)|^{2}= |X(T)|^{2}- 2 \int_{t}^{T} X(s) \dd X(s)- \int_{t}^{T} |Z(s)|^{2} \dd s.$$
Using Lemma \ref{memmnnfbbbbbrbrwww}  we have that $\int_0^{\cdot} X\pi^{\top} \sigma \dd W$ is a martingale, and so taking the expectation in the above expression and omitting the time arguments, we get: \small
$$\ba{rcl}\EE\left( |X(t)|^2+ \int_{t}^{T} |Z|^{2} \dd s \right) &=& \EE\left( |X(T)|^{2} - 2 \int_{t}^{T} r|X|^2\dd s  - 2 \int_{t}^{T} X Z^{\top}\sigma^{-1}\{ \mu-r{\bf 1}\} \dd s\right),\\[4pt]
										\;      &\leq & \EE\left( |X(T)|^{2}+ 2 \| r \|_{\infty} \int_{t}^{T} |X|^2\dd s  + 2 \| \sigma^{-1} \{\mu - r{\bf 1}\}\|_{\infty} \int_{t}^{T} |X| |Z| \dd s\right),\\[4pt]
										\;      &\leq & \EE\left( |X(T)|^{2} + 2 \left ( \| r \|_{\infty} + \| \sigma^{-1} \{\mu-r{\bf 1}\}\|_{\infty}^{2} \right )\int_{t}^{T}|X|^{2} \dd s + \half \int_{t}^{T} |Z|^{2} \dd s \right),\ea   $$ \normalsize
from which 
\be\label{qemqemqmnnddnaaaaaa} 
\EE\left( |X(t)|^2+ \half \int_{t}^{T} |Z|^{2} \dd s \right)\leq \EE\left( |X(T)|^{2} + 2 \left ( \| r \|_{\infty} + \| \sigma^{-1} \{\mu-r{\bf 1}\}\|_{\infty}^{2} \right ) \int_{t}^{T}|X|^{2} \dd s \right).
\ee
Since the above inequality implies that 
$$
\EE\left( |X(t)|^2 \right)\leq \EE\left( |X(T)|^{2} + 2 \left ( \| r \|_{\infty} + \| \sigma^{-1} \{\mu-r{\bf 1}\}\|_{\infty}^{2} \right ) \int_{t}^{T}|X|^{2} \dd s \right).
$$
by Gronwall's Lemma we obtain that 
\be
\label{mkmkmkmk}
\EE\left( |X(t)|^2 \right)\leq  \EE\left( |X(T)|^{2}\right)e^{2 \left ( \| r \|_{\infty} + \| \sigma^{-1} \{\mu-r{\bf 1}\}\|_{\infty}^{2} \right ) T}
\ee
and \eqref{mqemqmemqmeqssssaa} follows from  estimate \eqref{qemqemqmnnddnaaaaaa}, Fubini's  Theorem, the definition of $Z$ and the fact that $\sigma \sigma^{\top} \geq \delta I_{d\times d}$. Finally, estimate \eqref{estimX} is a consequence of \eqref{mkmkmkmk} and  Fubini's  Theorem.\end{proof}

\medskip

For $P\in \P_4$ let us write the dynamic constraint \eqref{ecXr} as $G(X,\pi,P)=0$ with $$G(X,\pi,P)= x + \int_0^{\cdot}\left[ r(t)X(t)+\pi(t)^{\top} \{\mu(t)-r(t){\bf 1}\} \right] \dd t  +\int_{0}^{\cdot}\pi(t)^{\top} \sigma(t)\dd W(t)- X(\cdot)$$ and further consider $\hat{G}(X,\pi,P)=(G(X,\pi,P),\EE[X(T)]-A)$. Let us prove first:

\begin{lemma} 
\label{calif}
The function $\hat{G}:\I^{1}\times (L^{2,2}_{\FF})^{d} \times \P_4 \mapsto \I^{1}\times\RR$ is continuously Fr\'echet  differentiable. Furthermore, if $P\in  \hat{\P}_4$,  then $D_{(x,\pi)}\hat{G}(X,\pi,P):\I^1\times  (L^{2,2}_{\FF})^{d} \mapsto \I^1\times\RR$ is onto. 
\end{lemma}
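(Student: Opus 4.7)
The statement splits into two largely independent tasks. For the continuous Fr\'echet differentiability of $\hat{G}=(G,\Psi)$, where $\Psi(X,\pi,P):=\EE[X(T)]-A$, I would first observe that $X\mapsto\EE[X(T)]$ is a bounded linear functional on $\I^{1}$ by Lemma \ref{meqwnenenqneqbebqbs}, and the dependence on $A$ is linear, so $\Psi$ is of class $C^{\infty}$. For $G$ itself, the structure is a sum of bilinear terms between $(X,\pi)$ and the $L^{\infty,\infty}_{\FF}$-bounded coefficients $(r,\mu,\sigma)$, plus a linear term in $x$. One computes the G\^ateaux derivative explicitly and bounds differences of the derivative at two parameter points by quantities controlled by the coefficient norms, exactly as in the proof of Lemma \ref{regularidadLQ}; this yields continuous Fr\'echet differentiability.

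For the surjectivity of $D_{(X,\pi)}\hat{G}(X,\pi,P):\I^{1}\times (L^{2,2}_{\FF})^{d}\to\I^{1}\times\RR$, fix $P\in\hat{\P}_{4}$ and $(X,\pi)$. Denoting by $L$ the linearization of $G$ with respect to $(X,\pi)$,
\[
D_{(X,\pi)}\hat{G}(X,\pi,P)(X',\pi')=\bigl(L(X',\pi'),\,\EE[X'(T)]\bigr).
\]
Given $(\xi,a)\in\I^{1}\times\RR$, I would proceed in two steps. First, Lemma \ref{wrnwnrnwnrnqbqbqbq} gives that for any $\pi'\in (L^{2,2}_{\FF})^{d}$ the equation $L(X',\pi')=\xi$ admits a unique solution $X'=X'[\xi,\pi']\in\I^{1}$ depending affinely on $\pi'$; writing $X'=X'_{0}+Y[\pi']$ with $X'_{0}$ the solution corresponding to $\pi'\equiv 0$, the residual $Y[\pi']$ solves the homogeneous linear SDE
\[
\dd Y(t)=\bigl[r(t)Y(t)+\pi'(t)^{\top}(\mu(t)-r(t)\mathbf{1})\bigr]\dd t+\pi'(t)^{\top}\sigma(t)\dd W(t),\qquad Y(0)=0.
\]
Surjectivity of $D_{(X,\pi)}\hat{G}(X,\pi,P)$ is thus reduced to showing that the prescribed number $a-\EE[X'_{0}(T)]\in\RR$ can be realised as $\EE[Y[\pi'](T)]$ for some admissible $\pi'$.

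The idea for this last step is to exploit the qualification condition defining $\hat{\P}_{4}$. Since $\sum_{i=1}^{d}\left|\EE(\int_{0}^{T}[\mu_{i}(t)-r(t)]\dd t)\right|>0$, some index $i_{0}$ satisfies $\int_{0}^{T}\EE[\mu_{i_{0}}(t)-r(t)]\dd t\neq 0$, so the deterministic map $t\mapsto\EE[\mu_{i_{0}}(t)-r(t)]$ is not a.e.\ zero on $[0,T]$. Restricting to $\pi'(t)=\alpha(t)e_{i_{0}}$ with $\alpha\in L^{\infty}([0,T])$ deterministic, applying the integrating factor $e^{-\int_{0}^{\cdot}r(s)\dd s}$ to the SDE above and taking expectations (legitimate by Fubini and because $r$ is deterministic) yields
\[
\EE[Y[\pi'](T)]=\int_{0}^{T}e^{\int_{t}^{T}r(s)\dd s}\,\alpha(t)\,\EE[\mu_{i_{0}}(t)-r(t)]\dd t.
\]
Because the weight $e^{\int_{t}^{T}r\dd s}$ is strictly positive and $\EE[\mu_{i_{0}}-r]\not\equiv 0$, this is a non-trivial bounded linear functional of $\alpha$, hence surjective onto $\RR$.

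The main obstacle is precisely this final step: the naive constant ansatz $\pi'\equiv c\,e_{i_{0}}$ would reduce the problem to requiring $\int_{0}^{T}e^{\int_{t}^{T}r\dd s}\EE[\mu_{i_{0}}-r]\dd t\neq 0$, which is \emph{not} implied by the $\hat{\P}_{4}$-condition due to possible sign cancellations in the weighted integral. Allowing $\alpha$ to range over $L^{\infty}([0,T])$ removes this obstruction and is the key technical point of the proof.
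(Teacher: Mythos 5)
Your proof is correct, and its first half (differentiability via the argument of Lemma \ref{regularidadLQ} plus the observation that $(X,A)\mapsto \EE[X(T)]-A$ is a bounded affine functional) is exactly what the paper does. For the surjectivity, your route differs only in the last step, and the comparison is instructive. Like the paper, you solve the $\I^{1}$-component of the equation for arbitrary $\pi'$ (unique solvability of the linear SDE, Lemma \ref{wrnwnrnwnrnqbqbqbq}) and then adjust a single coordinate $i_{0}$ with $\kappa:=\EE\bigl(\int_{0}^{T}[\mu_{i_{0}}(t)-r(t)]\dd t\bigr)\neq 0$ to hit the scalar constraint. Where you invoke a soft argument --- the map $\alpha\in L^{\infty}([0,T])\mapsto \int_{0}^{T}e^{\int_{t}^{T}r\,\dd s}\,\alpha(t)\,\EE[\mu_{i_{0}}(t)-r(t)]\dd t$ is a nonzero linear functional, hence onto $\RR$ --- the paper makes an explicit choice: it takes $\nu^{j}\equiv 0$ for $j\neq i_{0}$ and $\nu^{i_{0}}(t)=c\,e^{\int_{0}^{t}r(s)\dd s}$ with an explicit constant $c$. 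This exponentially weighted ansatz cancels the discount factor, so the weighted integral becomes exactly $c\,e^{\int_{0}^{T}r\,\dd t}\,\kappa$, and the ``obstacle'' you flag (possible sign cancellation in $\int_{0}^{T}e^{\int_{t}^{T}r\,\dd s}\EE[\mu_{i_{0}}-r]\dd t$ for a genuinely constant ansatz) never arises; one also gets a closed-form expression for the preimage, namely $c=\bigl(\xi+e^{\int_{0}^{T}r\,\dd t}[Y_{0}+\EE(\int_{0}^{T}e^{-\int_{0}^{t}r\,\dd s}Y_{1}(t)\dd t)]\bigr)/\bigl(e^{\int_{0}^{T}r\,\dd t}\kappa\bigr)$. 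Your version is slightly less explicit but equally valid (the weight $e^{\int_{t}^{T}r}\EE[\mu_{i_{0}}-r]$ is not a.e.\ zero precisely because $\kappa\neq 0$, so the functional is nontrivial); the paper's version buys an explicit right inverse on the distinguished coordinate, which is also the construction reused later in the proofs of Lemma \ref{attaingen} and Proposition \ref{lemconv}.
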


\begin{proof}   The  Fr\'echet  differentiability  of $\hat{G}$ can be proved following exactly the same lines of the proof in Lemma \ref{regularidadLQ} and using  that   
 the second component of $\hat{G}$ is a continuous linear functional.  For the surjectivity claim, suppose  that $P \in \hat{\P}_{4}$ and that we are given $Y\in\I^{1}$ and $\xi\in\RR$. Then we need to find $(Z, \nu)\in \I^{1}\times (L^{2,2}_{\FF})^{d}$ such that:
\be\label{qmemqnndndnbbbasa}\ba{rcl}
-Z(\cdot)+\int_{0}^{\cdot}\left[ rZ+ \nu^{\top}(\mu - r{\bf 1})\right] \dd t+ \int_{0}^{\cdot}\nu^{\top} \sigma \dd W(t) &=&Y_{0}+ \int_{0}^{\cdot} Y_{1} \dd t+ \int_{0}^{\cdot} Y_{2} \dd W(t),\\[4pt]
\EE[Z(T)]&=&\xi.\ea
\ee
Let $i \in \{1,\hdots, d\}$ be such that $\kappa:= \EE\left(\int_0^T [\mu_i(t) -r(t)]\right)\dd t \neq 0$. Then, consider the portfolio $\nu$ with $\nu^{j}=0$ for $j\neq i$ and 
$$ \nu^{i}(t):= \left( \frac{\xi +  e^{\int_{0}^{T}r(t) \dd t}\left[Y_0  +  \EE\left(\int_{0}^{T} e^{-\int_{0}^{t}r(s)\dd s} Y_{1}(t) \dd t\right)\right]}{  e^{\int_{0}^{T}r(t) \dd t} \kappa}\right)e^{\int_{0}^{t}r(s) \dd s}. $$
Then, defining  $Z\in \I^{1}$ as the solution of 
$$\ba{rcl}
dZ(t) &=& \left[r(t)Z(t) + \nu^{\top}(\mu- r {\bf 1})-Y^1(t)\right]\dd t+\left[  \nu^{\top}\sigma  -Y^2(t)\right ]\dd W(t), \; \; \mbox{for all $t\in [0,T]$}, \\[4pt]
 Z(0)&=& - Y_0,\ea
$$
we easily check that $(Z,\nu)$ satisfies \eqref{qmemqnndndnbbbasa}.
\end{proof}\\

We now show that problem $(MVP)$ is attained. From here onwards $P:=(x,r,A,\mu,\sigma)\in \hat{\P}_4$ will denote a tuple of (reference, nominal) parameters. We denote by $v(P)$ the value of $(MVP)$ under parameters $P$.
\begin{lemma}
\label{attaingen}
We have that $v(P)<\infty$, and further this value is attained at a unique feasible pair $(X[P],\pi[P])$. Moreover,  there exists a unique weak-Pontryagin multiplier $$(p[P],q[P], \lambda_{E}[P])\in\I\times (L^{2,2}_{\FF})^{1\times d}\times \RR$$ satisfying:  
\be\label{dmwmwnrnwrnbbbssasas}
\ba{rcl}
\dd p[P](t) &=& -r(t)p[P](t)\dd t + q[P](t) \dd W(t) \; \; \; \mbox{ {\rm for all} $t \in ]0,T[$,}\\[4pt]
p[P](T) &=& 2[X[P](T)-A]+\lambda_{E}[P] \; \; \; \mbox{a.s. in $\Omega$}, \\[4pt]
p[P](t,\omega)(\mu(t,\omega)-r(t){\bf 1}) &=& -  \sigma(t,\omega) (q[P](t,\omega))^{\top} \; \; \;  \mbox{ {\rm a.s. in} $ [0,T]\times \Omega$.} 
\ea
\ee 
\end{lemma}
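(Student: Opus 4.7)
The plan is to handle the three claims in turn: finiteness of $v(P)$, existence and uniqueness of the minimizer, and existence/uniqueness of the weak-Pontryagin multiplier via the Lagrangian viewpoint developed in Section \ref{mwmnnsndnadawwwapapapapas}.

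First I would use the hypothesis $P\in\hat{\mathcal P}_4$ to invoke Lemma \ref{calif}: the surjectivity of $D_{(X,\pi)}\hat G(X,\pi,P)$, combined with the fact that $\hat G(\cdot,\cdot,P)$ is affine, produces a feasible pair, so $v(P)<+\infty$. Along a minimizing sequence $(X^n,\pi^n)$ the bounded terminal variance together with $\EE[X^n(T)]=A$ bounds $\EE[X^n(T)^2]$, and then Lemma \ref{cotas} (which crucially uses $\sigma\sigma^\top\succeq\delta I_{d\times d}$) bounds $\|\pi^n\|_{2,2}$ and thus $\|X^n\|_{\I}$. Reflexivity of $\I^{1}\times (L^{2,2}_{\FF})^{d}$ then yields a weakly convergent subsequence whose limit remains feasible thanks to Proposition \ref{qmmqemqmndndndndaaa}(i)--(ii): both the linear SDE $\hat G(\cdot,\cdot,P)=0$ and the linear constraint $\EE[X(T)]=A$ pass to the weak limit. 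Since $F(X,\pi)=\EE[(X(T)-A)^2]$ is convex and strongly continuous, hence weakly lower semicontinuous, the limit achieves $v(P)$.

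For uniqueness of the optimizer, strict convexity of $y\mapsto(y-A)^2$ forces any two optima $(X^1,\pi^1)$, $(X^2,\pi^2)$ to satisfy $X^1(T)=X^2(T)$ almost surely, since otherwise their convex midpoint would strictly lower the cost. Writing $Y:=X^1-X^2$ and $\eta:=\pi^1-\pi^2$, the pair $(Y,\eta^{\top}\sigma)$ solves a linear BSDE with $Y(0)=Y(T)=0$; standard BSDE uniqueness forces $Y\equiv 0$ and $\eta^{\top}\sigma\equiv 0$, and the invertibility of $\sigma$ then yields $\eta\equiv 0$. This step is the main technical obstacle: the cost is only strictly convex in the terminal wealth, not in the pair $(X,\pi)$, so uniqueness of the portfolio has to be recovered from the dynamics rather than from the objective.

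Finally, I would cast $(MVP)$ as an instance of $(P_{2,P})$ with $U=\RR^{d}$, $n_I=0$, $n_E=1$, $\ell\equiv 0$, $\Phi(x)=(x-A)^2$ and $\Phi_{E}(x)=x-A$; all the hypotheses \textbf{(H1)}--\textbf{(H4)} are trivially satisfied, and the qualification condition $(S)(\mathrm{i})$ of Theorem \ref{madnandeueueuaasas} coincides with the surjectivity of $D_{(X,\pi)}\hat G$ from Lemma \ref{calif}. Combining Theorem \ref{madnandeueueuaasas} with Remark \ref{mmdnsnqwwwwwwwqeodoff}(i) delivers a unique Lagrange multiplier $(\bar\lambda_{\I},\bar\lambda_{E})\in\I^{1}\times\RR$ at the optimum, and Theorem \ref{Teoidentificacion} converts it into the unique weak-Pontryagin multiplier $(p[P],q[P],\lambda_{E}[P]):=((\bar\lambda_{\I})_{1},(\bar\lambda_{\I})_{2},\bar\lambda_{E})$. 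The first two equations in \eqref{dmwmwnrnwrnbbbssasas} then follow from unwinding the definition of the Hamiltonian $H$ with the specific data above (so $H_{x}=rp$ and $p(T)=2(X[P](T)-A)+\lambda_{E}[P]$), while the third identity is simply the first-order optimality condition $H_{\pi}=p(\mu-r\mathbf{1})+\sigma q^{\top}=0$ in the unconstrained case $U=\RR^{d}$.
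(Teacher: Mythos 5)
Your proposal is correct and follows essentially the same route as the paper: Lemma \ref{cotas} bounds along a minimizing sequence, weak compactness plus weak lower semicontinuity of the convex cost for existence, strict convexity in the terminal wealth combined with the dynamics for uniqueness of the optimal pair, and Theorem \ref{madnandeueueuaasas} together with Remark \ref{mmdnsnqwwwwwwwqeodoff}(i) and Lemma \ref{calif} for the unique multiplier, whose explicit form \eqref{dmwmwnrnwrnbbbssasas} is obtained by unwinding the Hamiltonian as you do. The only (harmless) deviations are that the paper exhibits an explicit feasible portfolio built from \eqref{mwrmwmnndndndsss} rather than invoking surjectivity of the affine constraint map, and it deduces $\pi^{1}=\pi^{2}$ from the estimate of Lemma \ref{cotas} applied to the difference process instead of from BSDE uniqueness.
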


\begin{proof}
For the finiteness of $v(P)$ it suffices to prove that the feasible set is non-empty. Indeed,   by \eqref{mwrmwmnndndndsss} there is an $i$ such that $\EE[\int_0^T(\mu^i(t) -r(t))\dd t]\neq 0$. Therefore,   as in the proof of Lemma \ref{calif}, we may build the portfolio $\pi$ having $0$ in every coordinate except for the $i$-th one, which is set to
$$\left(\frac{A\exp\{-\int_0^T r(t)\dd t\}-x}{\EE[\int_0^T(\mu^i(t) -r(t))\dd t]}\right)e^{\int_{0}^{\cdot} r(t) \dd t}.$$
We easily see that   the corresponding wealth process has expected return equal to $A$ at time $T$ and so it is feasible. Suppose now that $(X^1,\pi^1)$ and $(X^2,\pi^{2})$ attain $v(P)$. This implies that $\EE\left [ (X^1(T))^2\right ]= \EE\left [ (X^2(T))^2\right ]$. If $X^1(T)$ were not almost surely equal to $X^2(T)$, by strict convexity of $Z \in L^{2}_{\F_{T}} \mapsto \EE[Z^2]$ we would get that the pair $\frac{1}{2}(X^1+X^2,\pi^1+\pi^{2} )$ is feasible and induces a strictly smaller value of the objective function, yielding a contradiction. Calling now 
$$\hat{X}(\cdot):= X^1(\cdot)-X^2(\cdot)=\int_0^{\cdot}\{ r(X^1-X^2 ) +(\pi^1-\pi^2)^{\top}( \mu- r {\bf 1}) \}\dd t + \int_0^{\cdot}(\pi^1-\pi^2)^{\top} \sigma \dd W(t),$$
we see that $\hat{X}(T)=0$  and from Lemma \ref{cotas}    that $\pi^1-\pi^2 \equiv 0$ and thus $\hat{X}(\cdot)\equiv 0$, and so that $X^1$ and $X^2$ are indistinguishable.
For attainability, suppose first that $(X^{k},\pi^{k})$ is a feasible optimizing sequence. We then know that $\EE\left ([X^{k}(T)]^2\right)$ is bounded.
By Lemma \ref{cotas} we get that $\pi^{k}$  is bounded in $(L^{2,2}_{\FF})^d$ and $X^{k}$ is bounded in $L^{2,2}_{\FF}$. Therefore, there exist $\pi \in (L^{2,2}_{\FF})^d$, $\hat{X} \in L^{2,2}_{\FF}$ such that, up to some subsequence, $(X^{k},\pi^{k})$ converges weakly to $(\hat{X}, \pi)$ in  $ L^{2,2}_{\FF}\times (L^{2,2}_{\FF})^d$. Moreover, since in $L^{2,2}_{\FF}$, we have that   $X_{1}^{k}$ converges weakly to $r \hat{X} + \pi^{\top}(\mu- r {\bf 1})$ and $X_{2}^{k}$ converges weakly to $\pi^{\top} \sigma$, we obtain that $X^{k}$ converges weakly in $\I^{1}$ to 
$$X(\cdot):=x+ \int_{0}^{\cdot} \left[ r \hat{X} + \pi^{\top}(\mu- r {\bf 1}) \right] \dd t+  \int_{0}^{\cdot} \pi^{\top} \sigma \dd W(t).$$
%
Therefore, using that $\I$ is injected continuously in $L^{2,2}_{\FF}$ by Proposition \ref{qmmqemqmndndndndaaa}{\rm(i)}, uniqueness of the weak limit implies that $\hat{X}=X$. Moreover, using  Proposition \ref{qmmqemqmndndndndaaa}{\rm(i)} again we see that  $\EE[X^{k}(T)]=A$ passes to the limit and we obtain that  $(X,\pi)$ is a feasible pair. Since the cost function is convex and strongly continuous we have that it is l.s.c. with respect to the weak convergence in $\I^{1}$, which implies that $(X,\pi)$ is the optimal pair.
%
Finally, the existence and uniqueness of the weak-Pontryagin multiplier $(p[P],q[P],\lambda_{E}[P])$ is a direct consequence of Theorem  \ref{madnandeueueuaasas}, Remark \ref{mmdnsnqwwwwwwwqeodoff}{\rm(i)} and  Lemma \ref{calif}.
Using \eqref{annqenbrbbrbsbsbsbsaaa}, it is straightforward to see that   $(p[P],q[P],\lambda_{E}[P])$   satisfies \eqref{dmwmwnrnwrnbbbssasas}.    
%
\end{proof}
\medskip


In order to simplify the sensitivity analysis, we use a change of variables that reduces the number of parameters. We let $X'(\cdot):=e^{-\int_0^{\cdot} r \dd t} X(\cdot)-Ae^{-\int_{0}^{T} r(t) \dd t}$  and for the portfolio variables we define the new ones by $\pi'(\cdot)= e^{-\int_0^\cdot r \dd s}\pi(\cdot)$.  With this change of variables, we easily see that for $P'=(x-Ae^{-\int_{0}^{T} r(t) \dd t},0,0,\mu-r{\bf 1},\sigma)$ we have the identity 
\be\label{memqemnnansaaaaaasanadad}v(P)=e^{2 \int_0^T r \dd s} v(P').\ee
Moreover,  $(\bar{X},\bar{\pi}, \bar{p},\bar{q}, \bar{\lambda}_E)=(X[P], \pi[P],p[P],q[P], \lambda_{E}[P])$ if and only if   
\be\label{qmmndnnnnnnsssasasa}\ba{rcl}
(X[P'], \pi[P'])&=&  (e^{-\int_0^{\cdot} r \dd t} \bar{X}(\cdot)-Ae^{-\int_{0}^{T} r(t) \dd t}, e^{-\int_0^{\cdot} r \dd t} \bar{\pi})\\[4pt]
(p[P'],q[P'], \lambda_{E}[P']) &=& \left(e^{\int_0^{\cdot} r \dd t-2 \int_{0}^{T}r \dd t} \bar{p}, \; e^{\int_0^{\cdot} r \dd t-2 \int_{0}^{T}r \dd t} \bar{q},  \; e^{-\int_0^{T} r \dd t}\bar{\lambda}_{E}\right). \ea
\ee

Therefore, in the following we will  consider general  perturbations with respect to the initial condition, the drift and diffusion coefficients, and for ease of notation we will write the value function only in terms of these parameters. That is, we shall assume that $r\equiv 0$, $A=0$ and consider perturbed parameters of the form $P(k):= (x^k,\mu^k,\sigma^k)$. In the end of this section we shall undo the above change of variables and analyse the full original problem.

 We will repeatedly use  the notation 
$$\ba{rcl}(X^k,\pi^k, p^k,q^{k}, \lambda^{k})&:=&(X[P({k})], \pi[P({k})],p[P({k})],q[P({k})], \lambda_{E}[P({k})]),\\[4pt]
		(\bar{X},\bar{\pi}, \bar{p},\bar{q}, \bar{\lambda}_E)&:=&(X[P], \pi[P],p[P],q[P], \lambda_{E}[P]),\ea
$$
 We now prove a stability result.
\begin{proposition}
\label{lemconv}
For any sequence $P(k)\rightarrow P$ we have that $v(P({k}))\rightarrow v(P)$ and further 
$$\left ( X^{k},\pi^{k}, p^{k},q^{k},\lambda^{k}\right )\rightarrow \left(\bar{X},\bar{\pi}, \bar{p},\bar{q}, \bar{\lambda}_E\right),$$
strongly in $\mathcal{I}^{1}\times (L^{2,2}_{\FF})^d \times \I^1 \times (L^{2,2}_{\FF})^{1,d}\times \RR$.
\end{proposition}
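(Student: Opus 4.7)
The plan is to follow the scheme of Proposition \ref{convergencia}, in three stages: convergence of the value function, weak identification of the cluster point of $(X^k,\pi^k,p^k,q^k,\lambda^k)$ with $(\bar X,\bar\pi,\bar p,\bar q,\bar\lambda_E)$ using uniqueness in Lemma \ref{attaingen}, and upgrading to strong convergence by means of the cost, Lemma \ref{cotas}, and standard BSDE stability. The extra wrinkle compared to the LQ case is the equality constraint $\EE[X(T)]=A$ (which in the reduced problem reads $\EE[X(T)]=0$) and the presence of the scalar multiplier $\lambda^k$, so neither feasibility of approximations nor boundedness of $\lambda^k$ are automatic.

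For $v(P(k))\to v(P)$, I would establish $\limsup v(P(k))\leq v(P)$ by constructing a feasible sequence $(\hat X^k,\hat\pi^k)$ for $P(k)$ of the form $\hat\pi^k=\bar\pi+\alpha^k\pi^\star_k$, where $\pi^\star_k$ is the simple ``mean-correction'' portfolio of Lemma \ref{attaingen} built from $P(k)$ (which is well-defined for large $k$ by the openness of $\hat{\mathcal P}_4$ and the condition \eqref{mwrmwmnndndndsss}), and $\alpha^k\in\RR$ is chosen so that $\EE[\hat X^k(T)]=A$. Since $\EE[\bar X(T)]=A$ and the parameters differ by $o(1)$, classical estimates for linear SDEs give $\alpha^k\to 0$ and hence $\EE[(\hat X^k(T)-A)^2]\to \EE[(\bar X(T)-A)^2]$. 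The reverse inequality $\liminf v(P(k))\geq v(P)$ is obtained symmetrically: starting from the optimizer $(X^k,\pi^k)$ of $P(k)$ (which is uniformly bounded by Lemma \ref{cotas}, using boundedness of $v(P(k))$), perturb the control to restore the constraint under the nominal parameters.

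Next, Lemma \ref{cotas} together with the boundedness of $v(P(k))$ yield that $(X^k,\pi^k)$ is bounded in $\I^1\times (L^{2,2}_\FF)^d$. Extract a weak cluster point $(\hat X,\hat \pi)$; Proposition \ref{qmmqemqmndndndndaaa}(ii) shows that it satisfies \eqref{ecXr} under $P$, and Proposition \ref{qmmqemqmndndndndaaa}(i) together with the weak continuity of $Z\mapsto \EE[Z]$ on $L^2_{\F_T}$ gives $\EE[\hat X(T)]=A$. Weak lower semicontinuity of $Z\mapsto \EE[(Z-A)^2]$, combined with $v(P(k))\to v(P)$, forces $(\hat X,\hat\pi)$ to be optimal; by uniqueness in Lemma \ref{attaingen}, $(\hat X,\hat \pi)=(\bar X,\bar\pi)$, and the whole sequence converges weakly. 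The strong convergence of $X^k(T)$ in $L^2_{\F_T}$ then follows from convergence of the $L^2$-norms and weak convergence; applying Lemma \ref{cotas} to the SDE satisfied by $X^k-\bar X$ (viewed as a linear equation with coefficients $(r,\mu,\sigma)$ and forcing terms involving the parameter differences $P(k)-P$ multiplied by the bounded $(X^k,\pi^k)$, together with the terminal datum $X^k(T)-\bar X(T)\to 0$ in $L^2$) yields strong convergence $\pi^k\to\bar\pi$ in $(L^{2,2}_\FF)^d$ and $X^k\to\bar X$ in $\I^1$.

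For the adjoint, the third relation in \eqref{dmwmwnrnwrnbbbssasas} (using the non-degeneracy $\sigma^k(\sigma^k)^\top\succeq \delta I$) gives $q^k=-p^k(\theta^k)^\top$ with $\theta^k=(\sigma^k)^{-1}(\mu^k-r^k\mathbf{1})$, so $p^k$ solves the linear SDE $dp^k=-p^k(\theta^k)^\top dW$ and $\lambda^k=p^k(0)$ after performing the reduction preceding \eqref{qmmndnnnnnnsssasasa}. Writing $p^k(T)=\lambda^k Z^k(T)$ with $Z^k$ the corresponding exponential martingale and equating with $2X^k(T)+\lambda^k$ produces the explicit formula $\lambda^k=2\EE[X^k(T)Z^k(T)]/\EE[(Z^k(T)-1)^2]$; the uniform boundedness of $\theta^k$ gives uniform bounds on the moments of $Z^k$, while \eqref{mwrmwmnndndndsss} at $P$ prevents the denominator from vanishing in the limit, so $\lambda^k$ is bounded and any cluster point must equal $\bar\lambda_E$ by uniqueness. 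Proposition \ref{qmmqemqmndndndndaaa}(iii) then gives weak convergence of $(p^k,q^k)$, and the classical BSDE stability estimates (together with strong convergence $2X^k(T)+\lambda^k\to 2\bar X(T)+\bar\lambda_E$ in $L^2_{\F_T}$) upgrade this to strong convergence in $\I^1\times (L^{2,2}_\FF)^{1\times d}$. The main obstacles I anticipate are the construction of the feasible approximation respecting the equality constraint and the a priori bound on $\lambda^k$, both of which rely crucially on the non-degeneracy condition \eqref{mwrmwmnndndndsss}.
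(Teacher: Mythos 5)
Your proposal is correct and, for the most part, follows the same three-stage scheme as the paper: convergence of the values via mean-correcting the $i$-th component of a near-feasible portfolio (both for $\limsup$ and $\liminf$), identification of the weak cluster point of $(X^k,\pi^k)$ through Proposition \ref{qmmqemqmndndndndaaa}, lower semicontinuity and uniqueness in Lemma \ref{attaingen}, and the upgrade to strong convergence via the strong $L^2_{\F_T}$-convergence of $X^k(T)$, Lemma \ref{cotas} applied to the difference equation, and classical BSDE stability. The one genuinely different ingredient is your a priori bound on $\lambda^k$. The paper sets $P^k=p^k-\lambda^k$, bounds $(P^k,q^k)$ by BSDE estimates from the bounded terminal data $2X^k(T)$, and then reads off $|\lambda^k|\,\|\mu^k\|_{2,2}\leq \|\mu^k\|_{\infty}\|P^k\|_{2,2}+\|\sigma^k\|_{\infty}\|q^k\|_{2,2}$, using that $\|\mu^k\|_{2,2}$ stays away from zero; you instead solve the adjoint forward as $p^k=\lambda^k Z^k$ with $Z^k$ the stochastic exponential of $-\int(\theta^k)^{\top}dW$ and obtain the explicit formula $\lambda^k=2\,\EE[X^k(T)Z^k(T)]/\EE[(Z^k(T)-1)^2]$ (using $\EE[X^k(T)]=0$ in the reduced problem), which is more explicit but shifts the burden to the denominator. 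Your claim that \eqref{mwrmwmnndndndsss} ``prevents the denominator from vanishing in the limit'' is asserted rather than proved; it does hold, but you should note why: $Z^k(T)\to Z(T)$ in $L^2$ because the market prices of risk $\theta^k$ are uniformly bounded and converge in $(L^{\infty,\infty}_{\FF})^{d}$, and $\EE[(Z(T)-1)^2]=0$ would mean $\int_0^T\theta^{\top}dW+\tfrac12\int_0^T|\theta|^2dt=0$ a.s., whence (taking expectations) $\theta\equiv 0$, contradicting \eqref{mwrmwmnndndndsss} together with the invertibility of $\sigma$. With that one-line justification added, your route to the bound on $\lambda^k$ is a valid alternative to the paper's inequality, and the remainder of your argument (weak convergence of $(p^k,q^k)$, identification of the limit with $(\bar p,\bar q,\bar\lambda_E)$ by uniqueness of the weak-Pontryagin multiplier, and the strong upgrade from the strongly convergent terminal data) coincides with the paper's.
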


\begin{proof} First note that, since $P\in \hat{\P}_{4}$, there is a coordinate $i$  (which we fix) such that $\left[\EE(\int_0^T \mu^i(t)\dd t)\right]^2>0$. This implies that, for $k$ large enough, 
$$\left[\EE\left(\int_0^T (\mu^i)^{k}(t)\dd t \right)\right]^2\geq  \frac{1}{2}\left[\EE\left(\int_0^T \mu^i(t) \dd t\right)\right]^2>0$$
 and so the   portfolios with $i$-th component equal to $ -x^{k}/\EE(\int_0^T (\mu^i)^{k} \dd t)$ (and zero in the remaining ones) are feasible for $(MVP({k}))$. Using these feasible portfolios, we easily get the existence of  $K>0$ (independent of $k$) such that  
$v(P({k}))= \EE[X^{k}(T)^2]\leq K$ and thus by Lemma \ref{cotas} we obtain that $\pi^{k}$ is bounded in $(L^{2,2}_{\FF})^d$.
Now,  consider first only those $k$ such that $v(P({k}))\geq v(P)$ and define a portfolio $\nu^{k}$ equals to $\bar{\pi}$ except for the $i$-th coordinate where it equals $\bar{\pi}^i+z^{k}$, with 
$$z^{k}:=\frac{-x^{k}-\EE(\int_0^T \bar{\pi}^{\top}\mu^k \dd t)}{\EE(\int_0^T (\mu^i)^{k} \dd t)}.$$ 
Calling $Z^{k}(\cdot)=x^{k}+\int_0^{\cdot}(\nu^{k})^{\top}\mu^{k} \dd t+\int_{0}^{\cdot} (\nu^{k})^{\top} \sigma^{k}\dd W(t)$, we easily check that $(Z^{k}, \nu^{k})$ is feasible for  $(MVP({k}))$ and, since $z^{k} \to 0$, we have that $\EE [(Z^{k}(T))^2]\rightarrow \EE [(\bar{X}(T))^2]$. Hence,  for any $\epsilon >0$ and  $k$ large enough we obtain that $| v(P({k}))- v(P)|\leq v(P({k}))+\epsilon - \EE [(Z^{k}(T))^2]\leq \epsilon$. On the other hand, by considering  those $k$ such that $v(P({k}))\leq v(P)$, with a similar manner we can construct   out of $\pi^{k}$ a new portfolio $\xi^{k}$ obtained by modification of $\pi^{k}$'s $i$-th component in a way that it becomes feasible for the unperturbed problem.  More precisely, it suffices to set $(\xi^{j})^{k}= (\pi^{j})^{k}$ for $j\neq i$ and   $ (\xi^{i})^{k}:= (\pi^{i})^{k}+ \hat{z}^{k}$, where 
 $$\hat{z}^{k}:=\frac{-x-\EE(\int_0^T (\pi^{k})^{\top}\mu \dd t)}{\EE(\int_0^T \mu^i \dd t)}=\frac{x^k-x-\EE(\int_0^T (\pi^{k})^{\top}[\mu-\mu^k] \dd t)}{\EE(\int_0^T \mu^i \dd t)} .$$ 
Since $\pi^{k}$ is bounded in $(L^{2,2}_{\FF})^d$ we obtain that $\hat{z}^{k}\to 0$ and, as before, we get that for every  $\epsilon >0$ and $n$ large enough, $| v(P({k}))- v(P)|\leq \epsilon$, which proves convergence of the value functions.\\
Now let $ \pi$ be any weak limit point of $\pi^{k}$ in $(L^{2,2}_{\FF})^d$. Since, for $(y,Y,Z)\in \RR \times L_{\FF}^{2,2}\times  (L_{\FF}^{2,2})^{d}$ \small
 $$\langle X^{k},(y,Y,Z) \rangle_{\I} = x^{k}y+\EE\left(\int_0^T Y(\pi^{k})^{\top}\mu^{k} \dd t\right) +\EE\left( \int_0^T (\pi^{k})^{\top}\sigma^{k} Z \dd t \right),$$ \normalsize
 we get that, except for some subsequence, $\langle X^{k},(y,Y,Z) \rangle_{\I} \rightarrow \langle  X,(y,Y,Z) \rangle_{\I} $ ,
 where $  X(\cdot)=x+\int_0^{\cdot} \pi^{\top}\mu \dd t+\int_{0}^{\cdot}  \pi^{\top}\sigma \dd W(t) $, and thus $X^{k}\rightarrow   X$ weakly in $\I^1$.  Noticing that $\EE( X(T))=0$, and by virtue of convergence of the value functions, we have similarly as in Lemma \ref{attaingen} that  $(X,\pi)=(\bar{X},\bar{\pi})$. 
 By Proposition \ref{qmmqemqmndndndndaaa}{\rm(i)} we see that $X^{k}(T)$ converges weakly in $L^2_{\F_T}$ to $\bar{X}(T)$ and  using that $\EE( [X^{k}(T)]^{2}) \to \EE( [\bar{X}(T)]^{2})$   we obtain that $X^{k}(T)\to \bar{X}(T)$ strongly.
Let us write $\hat{X}^{k}=x+\int_0^{\cdot} (\pi^{k})^{\top} \mu \dd t+ \int_{0}^{\cdot}(\pi^{k})^{\top} \sigma \dd W(t)$. Then by Lemma \ref{cotas}:
$$\|\pi^{k}-\bar{\pi} \|_{2,2}^2\leq C\EE[(\hat{X}^{k}(T)-\bar{X}(T))^2],$$
where $C=C(\mu,\sigma)>0$ is some positive constant. Now, we have that $\EE[(\bar{X}(T)-X^{k}(T))^2]$ tends to zero, and 
$$\EE[(\hat{X}^{k}(T)-X^{k}(T))^2]\leq |x-x^{k}|^2+ T\|\pi^{k}\|_{2,2}^2\left[\|\mu-\mu^{k}\|_{\infty,\infty}^2+\|\sigma-\sigma^{k}\|_{\infty,\infty}^2\right],$$
which also tends to zero. We conclude with the triangle inequality that $\pi^{k}\rightarrow \bar{\pi}$ strongly in $(L^{2,2}_{\FF})^{d}$. Finally, since 
$$\|X^{k}-\bar{X}\|_{\I}^2= |x-x^{k}|^2+ \|(\pi^{k})^{\top}\mu^{k} - \bar{\pi}^{\top}\mu\|_{2,2}^2 + \|(\pi^{k})^{\top}\sigma^{k} - \bar{\pi}^{\top}\sigma\|_{2,2}^2,$$
we conclude that $X^{k}\rightarrow \bar{X}$ strongly in $\I^{1}$. Now, for the weak-Pontryagin multipliers $(p^{k},q^{k},\lambda^{k})\in \I^1 \times (L^{2,2}_{\F})^{1\times d}\times \RR$,  by \eqref{dmwmwnrnwrnbbbssasas} we have that:
$$\ba{ll}
\dd p^{k}  &=  q^{k}  \dd W(t) \; \;  \mbox{for all $t\in ]0,T[$}, \; \; \; \;  p^{k}(T) =  2X^{k}(T)+\lambda^{k}, \\[4pt]
0 &= p^{k}(t,\omega)\mu^{k}(t,\omega)+\sigma^{k}(t,\omega) (q^{k}(t,\omega))^{\top}, \; \; \; \mbox{for a.a. $(t,\omega) \in [0,T]\times \Omega$}.\ea
$$
We will show now that the $\lambda^{k}$ are bounded uniformly in $k$. Define $P^{k}=p^{k}-\lambda^{k}$. Then we know that $P^{k}(T)=2X^{k}(T)$ and $\dd P^{k}(t) = q^{k} \dd W(t)$. Since the $X^{k}(T)$ are $L^2_{\F_T}$-bounded, classical estimates for linear BSDEs  imply  that both $q^{k}$ and $P^{k}$ are bounded in $(L^{2,2}_{\FF})^{1\times d}$ and $L^{2,2}_{\FF}$, respectively. 
On the other hand we have that $(P^{k}+\lambda^{k})\mu^{k}+\sigma^{k} (q^{k})^{\top} = 0$, which proves that $\lambda^{k}\mu^{k}=-P^{k}\mu^{k}-\sigma^{k} (q^{k})^{\top} $ and thus: 
$$|\lambda^{k}|\|\mu^{k}\|_{2,2}=\|\lambda^{k}\mu^{k}\|_{2,2}\leq \|\mu^{k}\|_{\infty}\|P^{k}\|_{2,2}+ \|\sigma^{k}\|_{\infty}\|q^{k}\|_{2,2}.$$
The right hand-side of the above expression is uniformly bounded by the nature of the perturbations we have, and the estimates we already had. Further, we check that $ \|\mu^{k}\|_{2,2}$   is bounded away from zero since $\mu\not\equiv 0$ and thus 
$\lambda^{k}$ is bounded. Take now any subsequence of $(X^{k},\pi^{k},\lambda^{k})$. Then, there exists $\hat{\lambda} \in \RR$ such that, except for some subsequence,  $(X^{k},\pi^{k},\lambda^{k})$ converges strongly  to  $(\bar{X},\bar{\pi}, \hat{\lambda})$. This implies, by the classical estimates for linear BSDEs,  that the corresponding $(p^{k},q^{k})$ converge strongly in $L^{2,\infty}_{\FF} \times (L^{2,2}_{\FF})^{1\times d}$ to the solution $(p,q)$ of 
$$
\dd p(t)= q(t) \dd W(t) \; \; \mbox{for $t \in ]0,T[$}, \;  \; \; p(T) = \bar{X}(T)+\hat{\lambda}.
$$
 Further,  since $p^{k}(\cdot)= \lambda^{k} + \int_0^{\cdot}q^{k}\dd W$, we have that $p^{k}\to p$ strongly in $\I^{1}$. 
Moreover, since $(q^{k})^{\top}=-p^{k}(\sigma^{k})^{-1}\mu^{k}$ converges  in $(L^{2,2}_{\FF})^{1,d}$ to $-p\sigma^{-1}\mu$ we conclude that $p\mu+\sigma q^{\top} = 0$. Therefore, by the uniqueness of the weak-Pontryagin multiplier in Lemma \ref{attaingen}, we deduce that $(p,q,\hat{\lambda})=(\bar{p},\bar{q}, \bar{\lambda}_{E})$. This proves that the whole sequence $(p^{k},q^{k},\lambda^{k})$ converges to $(\bar{p}, \bar{q},\bar{\lambda}_{E})$ strongly in $\I^{1}\times (L^{2,2}_{\FF})^{1,d} \times \RR$.
\end{proof}\smallskip

\smallskip
By   Lemma \ref{calif}, Proposition \ref{lemconv} and arguing exactly as in the proof of Theorem \ref{teoremalq}, we have the following result:
\begin{proposition}
\label{sensMV}
The value function $v:\mathcal{P}_4\mapsto \RR$ is of class  $C^{1}$ on   $\hat{\mathcal{P}}_4$. Moreover, at every $P=(x,0,0,\mu,\sigma) \in \hat{\P}_{4}$ we have that
\be\label{msmamsaasasasssasadada} D_{(x,\mu,\sigma)}v(P; \Delta P) = \bar{p}(0)\Delta x+ \EE\left[\int_0^T \bar{\pi}(t)^{\top}\Delta\mu(t) \bar{p}(t)\dd t\right]+ \EE\left[\int_0^T \bar{\pi}(t)^{\top}\Delta\sigma(t) \bar{q}(t)^{\top}\dd t\right], 
\ee \normalsize
where $\Delta P=(\Delta x,0,0,\Delta \mu,\Delta \sigma)$ and   $(\bar{X},\bar{\pi}, \bar{p},\bar{q}, \bar{\lambda}_{E})= (X[P], \pi[P], p[P], q[P], \lambda_{E}[P])$ is given by Lemma \ref{attaingen}.
\end{proposition}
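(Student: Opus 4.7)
The plan is to mimic the proof of Theorem \ref{teoremalq} step by step. First I would use the change of variables introduced just before the statement (see \eqref{memqemnnansaaaaaasanadad}--\eqref{qmmndnnnnnnsssasasa}) to reduce a general parameter $P=(x,r,A,\mu,\sigma)$ to the normalized form $P'=(x-Ae^{-\int_0^T r\dd t},0,0,\mu-r\mathbf{1},\sigma)$. Since $v(P)=e^{2\int_0^T r\dd s}v(P')$ and the transformation is smooth in $(r,A)$, the $C^1$ property and the sensitivity formula for the full parameter tuple can be recovered from the reduced case by the chain rule, so it suffices to prove the result for $P=(x,0,0,\mu,\sigma)\in\hat{\mathcal{P}}_4$.

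Next, I would view $(MVP)$ at such a $P$ as a convex optimization problem on the Hilbert space $\I^1\times(L^{2,2}_{\FF})^d$ with equality constraint $\hat{G}(X,\pi,P)=0$. Lemma \ref{calif} gives the continuous Fr\'echet differentiability of $\hat{G}$ with respect to $(X,\pi,P)$ and the surjectivity of $D_{(X,\pi)}\hat{G}(\bar{X},\bar{\pi},P)$, which is Robinson's regularity condition for this equality-constrained problem. Lemma \ref{attaingen} provides the unique optimizer $(\bar{X},\bar{\pi})$ together with the unique weak-Pontryagin multiplier $(\bar{p},\bar{q},\bar{\lambda}_E)$; by Theorem \ref{Teoidentificacion} the corresponding unique Lagrange multiplier for the dynamic constraint is the It\^o process $\bar{\lambda}_{\I}=\bar{p}(0)+\int_0^\cdot\bar{p}\,\dd t+\int_0^\cdot\bar{q}\,\dd W$.

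I would then apply the abstract sensitivity theorem \cite[Theorem 4.24]{BonSha}. Its hypotheses here are (a) continuous differentiability of the data, which we have, (b) uniqueness of the Lagrange multiplier, obtained from the surjectivity of $D_{(X,\pi)}\hat{G}$ exactly by the argument used at the end of the proof of Theorem \ref{sensibilidad1}, and (c) strong stability of minimizers under parameter perturbation, which is precisely Proposition \ref{lemconv}. The theorem then yields Hadamard directional differentiability of $v$ at $P$, with
\begin{equation*}
Dv(P;\Delta P)=\partial_P\mathcal{L}(\bar{X},\bar{\pi},\bar{\lambda}_{\I},\bar{\lambda}_E,P)\,\Delta P,
\end{equation*}
where $\mathcal{L}(X,\pi,\lambda_{\I},\lambda_E,P):=\EE([X(T)-A]^2)+\langle\lambda_{\I},G(X,\pi,P)\rangle_{\I}+\lambda_E(\EE[X(T)]-A)$.

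Finally I would compute this partial derivative explicitly. At the reduced parameter $(x,0,0,\mu,\sigma)$ the terminal constraint does not depend on $P$, so only the $\langle\bar{\lambda}_{\I},\partial_P G\rangle_{\I}$ term contributes. Since
\begin{equation*}
\partial_P G(\bar{X},\bar{\pi},P)\,\Delta P=\Delta x+\int_0^\cdot \bar{\pi}^\top\Delta\mu\,\dd t+\int_0^\cdot \bar{\pi}^\top\Delta\sigma\,\dd W(t),
\end{equation*}
unpacking the $\I$-scalar product and the identification $\bar{\lambda}_{\I}\leftrightarrow(\bar{p},\bar{q})$ produces exactly the right-hand side of \eqref{msmamsaasasasssasadada}. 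The $C^1$ regularity then follows because Proposition \ref{lemconv} implies that $P\mapsto(\bar{X},\bar{\pi},\bar{p},\bar{q})$ is strongly continuous, so the right-hand side of \eqref{msmamsaasasasssasadada} depends continuously on $P\in\hat{\mathcal{P}}_4$ as a bounded linear functional of $\Delta P$; combined with Hadamard differentiability this upgrades to continuous Fr\'echet differentiability. I expect the main obstacle to be essentially bookkeeping---matching the matrix--vector conventions for $\Delta\sigma$ and $\bar{q}^\top$, and verifying that \cite[Theorem 4.24]{BonSha} applies under the $L^\infty$ parameter perturbations in $\hat{\mathcal{P}}_4$---both of which are enabled by the preparatory lemmas already at our disposal.
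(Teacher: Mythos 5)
Your proposal follows essentially the same route as the paper: the paper proves Proposition \ref{sensMV} by combining the regularity/surjectivity of $\hat{G}$ from Lemma \ref{calif}, the strong stability of Proposition \ref{lemconv}, the multiplier identification of Theorem \ref{Teoidentificacion} and the abstract sensitivity result \cite[Theorem 4.24]{BonSha}, exactly as in Theorem \ref{teoremalq}, and then gets $C^1$ from the continuity of $P\mapsto(\bar{X},\bar{\pi},\bar{p},\bar{q})$ in the derivative formula. The only (harmless) difference is that your opening change-of-variables reduction really belongs to the proof of Theorem \ref{corosens}, since Proposition \ref{sensMV} is stated directly at reduced parameters $P=(x,0,0,\mu,\sigma)$.
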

\medskip

We now unwind the change of variables done in order to reduce the size of the parameter space. In this way we obtain sensitivities with respect to the initial capital,  deterministic interest/saving rates, the desired return, the drift and the diffusion coefficients.

%
\begin{theorem}\label{corosens}
The value function $v:\mathcal{P}_4\mapsto \RR$ is  $C^{1}$ on  $\hat{\mathcal{P}}_4$. Moreover, at every  $P=(x,r,A,\mu,\sigma) \in \hat{\mathcal{P}}_4$  we have that \small
\be
\ba{l}
D_{x} v(P; \Delta x )= \bar{p}(0) \Delta x, \\[4pt]
D_{r} v(P;\Delta r)  = \EE\left( \int_{0}^{T}  \bar{p}(t)( \bar{X}(t)  - \bar{\pi}^{\top}  {\bf 1} )\Delta r    \dd t \right),\\[4pt]
D_{A}v(P;\Delta A)= - \bar{\lambda}_{E}\Delta A, \\[4pt]
 D_{\mu}v(P ; \Delta \mu)=  \EE\left[\int_0^T \bar{\pi}(t)^{\top}\Delta\mu(t) \bar{p}(t)\dd t\right],\\[4pt]
  D_{\sigma}v(P;\Delta \sigma)= \EE\left[\int_0^T \bar{\pi}(t)^{\top}\Delta\sigma(t) \bar{q}(t)^{\top}\dd t\right],
\ea
\ee\normalsize
where  $(\bar{X},\bar{\pi}, \bar{p},\bar{q}, \bar{\lambda}_{E})= (X[P], \pi[P], p[P], q[P], \lambda_{E}[P])$ is given by Lemma \ref{attaingen}.
\end{theorem}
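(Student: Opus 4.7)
The strategy is to reduce to Proposition \ref{sensMV} via the change of variables introduced before \eqref{memqemnnansaaaaaasanadad}. For $P = (x,r,A,\mu,\sigma)\in \hat{\mathcal{P}}_4$ set
\[
 P' \;:=\; \bigl(x - A e^{-\int_0^T r(t)\dd t},\, 0,\, 0,\, \mu - r\mathbf{1},\, \sigma\bigr)\;\in\; \hat{\mathcal{P}}_4,
\]
and recall the identity $v(P) = e^{2\int_0^T r(t)\dd t}\, v(P')$. Since the map $P \mapsto \bigl(\int_0^T r(t)\dd t,\, P'\bigr)$ is smooth and $v$ is $C^1$ at $P'$ by Proposition \ref{sensMV}, the chain rule yields the $C^1$ regularity of $v$ at $P$. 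It remains to identify each directional derivative by combining the derivative formula from Proposition \ref{sensMV} with the translation rule \eqref{qmmndnnnnnnsssasasa}.

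For $D_x v$, $D_\mu v$ and $D_\sigma v$ the exponential factors $e^{\pm \int_0^T r\,\dd t}$ generated by the chain rule and by \eqref{qmmndnnnnnnsssasasa} cancel exactly, yielding the stated expressions at once. For $D_A v$, the same procedure gives $-e^{-\int_0^T r\,\dd t}\bar{p}(0)\Delta A$, and to recognise this as $-\bar{\lambda}_E \Delta A$ one needs the auxiliary identity $\bar{p}(0) = e^{\int_0^T r(t)\dd t}\bar{\lambda}_E$. That identity is obtained by taking expectation in the BSDE from \eqref{dmwmwnrnwrnbbbssasas}: the deterministic function $t \mapsto \EE[\bar{p}(t)]$ solves $\tfrac{d}{dt}\EE[\bar{p}(t)] = -r(t)\,\EE[\bar{p}(t)]$ with terminal value $\EE[\bar{p}(T)] = \EE[2(\bar{X}(T)-A)+\bar{\lambda}_E] = \bar{\lambda}_E$ (thanks to $\EE[\bar{X}(T)] = A$), and $\bar{p}(0)$ is $\mathcal{F}_0$-measurable, hence a.s.\ constant.

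The delicate case is $D_r v$, since $r$ enters $P'$ through both $x'$ and $\mu'$ as well as through the prefactor $e^{2\int_0^T r\dd t}$. A careful chain-rule computation using Proposition \ref{sensMV}, \eqref{qmmndnnnnnnsssasasa} and the identity $\bar{p}(0)=e^{\int_0^T r\dd t}\bar{\lambda}_E$ yields, with $I_{\Delta r}:=\int_0^T \Delta r(t)\,\dd t$,
\[
 D_r v(P;\Delta r) \;=\; 2 v(P)\, I_{\Delta r} + A \bar{\lambda}_E\, I_{\Delta r} \,-\, \EE\!\left[\int_0^T \bar{p}(t)\, \bar{\pi}(t)^{\top}\mathbf{1}\, \Delta r(t)\,\dd t\right].
\]
To match the claimed formula it suffices to show that $\EE[\bar{p}(t)\bar{X}(t)]$ equals $2 v(P) + A \bar{\lambda}_E$ for every $t\in[0,T]$. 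To that end, apply It\^o's product rule to $\bar{p}\bar{X}$: using the dynamics of $\bar{X}$, the BSDE \eqref{dmwmwnrnwrnbbbssasas} for $\bar{p}$, and the first-order condition $\bar{p}(\mu-r\mathbf{1}) + \sigma \bar{q}^{\top} = 0$, all drift terms cancel and one obtains
\[ \dd(\bar{p}\bar{X}) \;=\; \bigl(\bar{p}\,\bar{\pi}^{\top}\sigma + \bar{X}\,\bar{q}\bigr)\,\dd W(t). \]
By Lemma \ref{memmnnfbbbbbrbrwww} together with the $L^{2,\infty}$-bounds on $(\bar{X},\bar{p})$ and the $L^{2,2}$-bounds on $(\bar{\pi},\bar{q})$, the right-hand side is a true martingale, so $\EE[\bar{p}(t)\bar{X}(t)]$ is constant in $t$. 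Evaluating at $t=T$ through $\bar{p}(T) = 2(\bar{X}(T)-A) + \bar{\lambda}_E$, $\EE[\bar{X}(T)] = A$, and $\EE[\bar{X}(T)^2] = v(P) + A^2$ pins the constant value down to $2v(P)+A\bar{\lambda}_E$, closing the argument. The main obstacle is precisely this chain-rule bookkeeping for $D_r v$ combined with the martingale identification of $\bar{p}\bar{X}$; neither is deep once the reduction to the primed problem and the identity $\bar{p}(0)=e^{\int_0^T r\dd t}\bar{\lambda}_E$ are in hand.
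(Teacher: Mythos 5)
Your proposal is correct and follows essentially the same route as the paper: reduce to Proposition \ref{sensMV} through the change of variables and the identity $v(P)=e^{2\int_0^T r\,\dd t}v(P')$, apply the chain rule, and handle the $r$-derivative via It\^o's product rule on $\bar{p}\bar{X}$ together with the stationarity relation $\bar{p}(\mu-r\mathbf{1})+\sigma\bar{q}^{\top}=0$ and Lemma \ref{memmnnfbbbbbrbrwww}. The only (cosmetic) differences are that you phrase the key identity as constancy of $t\mapsto\EE[\bar{p}(t)\bar{X}(t)]$ rather than the paper's integration by parts against $R(t)=\int_0^t\Delta r\,\dd s$, and you derive $\bar{p}(0)=e^{\int_0^T r\,\dd t}\bar{\lambda}_E$ directly from the BSDE instead of using the equivalent fact $p[P'](0)=\lambda_E[P']$ in the primed problem.
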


\begin{proof}
Since $(x, r, A, \mu, \sigma) \to  (x-Ae^{-\int_{0}^{T} r(t) \dd t},0,0,\mu-r{\bf 1},\sigma)$  is  $C^{1}$,  we can apply the chain rule in \eqref{memqemnnansaaaaaasanadad}. Therefore, by \eqref{qmmndnnnnnnsssasasa} and Proposition \ref{sensMV}  we have that 
$$\ba{rcl}D_{x} v(P)&=& e^{2\int_{0}^{T} r(t) \dd t }\;  p[P'](0)= \bar{p}(0), \\[4pt]
	  D_{A} v(P)&=& e^{2\int_{0}^{T} r(t) \dd t}\; (-e^{-\int_{0}^{T} r(t) \dd t})  p[P'](0)= -e^{\int_{0}^{T} r(t) \dd t} \lambda_{E}[P']= -\bar{\lambda}_{E}, \\[4pt]
	     D_{\mu} v(P) \Delta \mu &=& e^{2\int_{0}^{T} r(t) \dd t }\;  \EE \left( \int_{0}^{T} e^{-\int_{0}^{t} r(s) \dd s} \bar{\pi}(t)^{\top} \Delta \mu (t) e^{\int_{0}^{t} r(s) \dd s-2\int_{0}^{T} r(s) \dd s} \bar{p}(t) \dd t\right),\\[4pt]
	     \; &= &   \EE\left[\int_0^T \bar{\pi}(t)^{\top}\Delta\mu(t) \bar{p}(t)\dd t\right], \\[4pt]
	      D_{\sigma} v(P) \Delta \sigma &=&  e^{2\int_{0}^{T} r(t) \dd t }\;  \EE \left( \int_{0}^{T} e^{-\int_{0}^{t} r(s) \dd s} \bar{\pi}(t)^{\top} \Delta \sigma (t) e^{\int_{0}^{t} r(s) \dd s-2\int_{0}^{T} r(s) \dd s} \bar{q}(t)^{\top} \dd t\right),\\[4pt]
	     \; &= &   \EE\left[\int_0^T \bar{\pi}(t)^{\top}\Delta\sigma (t) \bar{q}(t)^{\top}\dd t\right].
\ea
$$
Finally, setting $R(\cdot):= \int_{0}^{\cdot} \Delta r(t) \dd t$ and using that $p[P'](0)= \lambda_{E}[P']$, we obtain
\be\label{mqmennnaririririiri}\ba{rcl} D_{r} v( P; \Delta r)&=& 2 R(T) v(P) + e^{\int_{0}^{T} r(t) \dd t} p[P'](0) R(T)A \\[4pt]
					 \; & \; & -e^{2\int_{0}^{T} r(t) \dd t} \EE\left[ \int_{0}^{T} e^{-\int_{0}^{t} r(s) \dd s} \bar{\pi}^{\top} \Delta r(t) {\bf 1}e^{\int_{0}^{t} r(s) \dd s-2\int_{0}^{T} r(s) \dd s} \bar{p}(t) \dd t \right)\\[4pt]
					\; &=&  2 R(T) v(P) + \bar{\lambda}_{E} R(T)A- \EE\left(\int_{0}^{T} \bar{\pi}^{\top} \Delta r(t) {\bf 1} \bar{p}(t) \dd t \right).
					 \ea \ee
On the other hand, 
\be\label{qemqmemqdanda}\EE\left( \int_{0}^{T} \bar{X}(t)\bar{p}(t) \Delta r(t) \dd t\right)= \EE\left( R(T) \bar{X}(T)\bar{p}(T)- \int_{0}^{T} R(t) \dd (\bar{X}(t)\bar{p}(t))\right).\ee
By It\^o's formula, we can write 
$$\ba{rcl}  \dd (\bar{X}(t)\bar{p}(t))&=&  \left[r\bar{X}(t)p(t) + \bar{\pi}^{\top}( \mu(t) - r(t) {\bf 1}) \bar{p}(t)- \bar{X}(t) r(t) \bar{p}(t) + \bar{\pi}(t)^{\top} \sigma(t)\bar{q}(t)^{\top}  \right]\dd t \\[4pt]
						\;    & \; & +\left[ \bar{X}(t) \bar{q} + \bar{p}(t) \bar{\pi}(t)^{\top}\sigma(t) \right] \dd W(t).  \ea$$
Since, by the third line in \eqref{dmwmwnrnwrnbbbssasas},   $( \mu(t) - r(t) {\bf 1}) \bar{p}(t)= \sigma(t)\bar{q}(t)^{\top}$ we obtain with   Lemma \ref{memmnnfbbbbbrbrwww} that $\EE\left( \int_{0}^{T} R(t) \dd (\bar{X}(t)\bar{p}(t))\right)=0$. Therefore, by   \eqref{qemqmemqdanda} and the second line in \eqref{dmwmwnrnwrnbbbssasas}, we get \small
\be\label{qmemqnndnndnabbsassasasas}\ba{rcl}\EE\left( \int_{0}^{T} \bar{X}(t)\bar{p}(t) \Delta r(t) \dd t\right)=  \EE\left( R(T) \bar{X}(T)\bar{p}(T)\right)&=&R(T)\EE\left( \bar{X}(T)[2(\bar{X}(T)- A) + \bar{\lambda}_{E} ] \right), \\[4pt]
 \; &=& 2R(T) v(P) +  \bar{\lambda}_{E}R(T)A.
\ea
\ee\normalsize
The conclusion follows from \eqref{mqmennnaririririiri} and \eqref{qmemqnndnndnabbsassasasas}.
\end{proof}

\subsubsection{Comparison with a known explicit result}

We want to compare the theoretical sensitivities we obtain with those coming from a simplified model where an explicit solution is known. 
We choose to compare our results with the model in \cite[Example 4.1]{Oks04} (with null jump component).  More precisely, we consider the $(MVP)$ problem with $d=1$, 
$r\equiv 0$ and   $\mu(\cdot): [0,T] \to \RR$, $\sigma: [0,T] \to \RR$ being deterministic  bounded functions. Assuming that $\int_{0}^{T}\mu(t)\dd t \neq 0$ and that $\sigma$ is  uniformly positive,   problem $(MVP)$ can be explicitly solved (see \cite{Oks04} for the details).  Setting $\Sigma := \mu /\sigma$, the optimal portfolio and optimal states and adjoint states are given by 
\begin{eqnarray}
\bar{X}(t) &=& A+ \frac{x-A}{e^{\int_0^T \Sigma^2_s ds}-1}\left [ e^{\int_0^T \Sigma^2_s \dd s} e^{-\int_0^t \left(\Sigma_s  \dd W_s + \frac{3}{2}\Sigma^2_s  \dd s \right)} -1  \right ], \notag \\
\bar{\pi}(t) &=&  \frac{ (A-x)\mu(t) e^{\int_0^T \Sigma^2_s \dd s}}{\sigma(t)^2 \left ( e^{\int_0^T \Sigma^2_s \dd s}-1\right )}  e^{-\int_0^t \left(\Sigma_s  \dd W_s + \frac{3}{2}\Sigma^2_s \dd s \right)}, \label{piopt}\\
\bar{p}(t) &=& \frac{2(x-A)}{e^{\int_0^T \Sigma^2_s \dd s}-1}  e^{-\int_0^t \left(\Sigma_s \dd W_s + \frac{1}{2}\Sigma^2_s \dd s \right)},  \\
\bar{q}(t) &=& \frac{ 2(A-x)}{e^{\int_0^T \Sigma^2_s \dd s}-1}\Sigma(t) e^{-\int_0^t \left(\Sigma_s \dd W_s + \frac{1}{2}\Sigma^2_s \dd s \right)}.
\end{eqnarray} 
Thus, setting $P=(x,0,A,\mu,\sigma)$ and $\Delta P=(\Delta x,0,\Delta A,\Delta \mu,\Delta \sigma)$ from Theorem \ref{corosens} we have
\begin{equation}
Dv(P; \Delta P) = \bar{p}(0)[\Delta x -\Delta A]+\EE\left [\int_0^T \bar{p}(t)\bar{\pi}(t)\Delta \mu(t) \right ]+\EE\left [\int_0^T \bar{q}(t)\bar{\pi}(t)\Delta \sigma(t) \right ].\notag
\end{equation}
If we assume that  $\Delta \mu$ and $\Delta \sigma$ are deterministic, a brief computation then yields to:
\begin{eqnarray}
D_xv(P;\Delta x) &=& \frac{2(x-A)\Delta x}{ e^{\int_0^T \Sigma^2_s \dd s}-1}, \label{sensx}\\
D_Av(P;\Delta A) &=&  -\frac{2(x-A)\Delta A}{ e^{\int_0^T \Sigma^2_s \dd s}-1},\\
D_{\mu}v(P;\Delta \mu) &=& - \frac{2 (A-x)^2 e^{\int_0^T \Sigma^2_s \dd s}}{ \left (e^{\int_0^T \Sigma^2_s \dd s}-1\right )^2} \int_0^T \frac{\mu(t)\Delta \mu(t)}{\sigma(t)^2}\dd t, \\
D_{\sigma}v(P;\Delta \sigma) &=& \frac{2 (A-x)^2 e^{\int_0^T \Sigma^2_s \dd s}}{ \left (e^{\int_0^T \Sigma^2_s \dd s}-1\right )^2} \int_0^T \frac{\mu(t)^2\Delta \sigma(t)}{\sigma(t)^3}\dd t. \label{senssigma}
\end{eqnarray}
Since we know explicitly the solution,   we can actually  verify that 
$$v(P) = \frac{(x-A)^2}{e^{\int_0^T \Sigma^2_s \dd s}-1},$$
and thus computing its derivatives we easily recover  \eqref{sensx}-\eqref{senssigma}.

\bibliographystyle{plain}
\bibliography{projjf3}
\end{document}